\newtheorem{theorem}{Theorem}
\newtheorem{lemma}[subsection]{{\bf Lemma}}
\newtheorem{coro}[subsection]{{\bf Corollary}}
\newcommand{\del}{\delta}
\newcommand{\Z}{\mbox{$\mathbb Z$}}
\newcommand{\Q}{\mbox{$\mathbb Q$}}
\newcommand{\ps}{$\psi_n^{(\alpha)}(x)$}
\newcommand{\g}{g_n^{(\alpha)}(x)}
\newcommand{\G}{G_n^{(\alpha)}(x)}
\newcommand\numberthis{\addtocounter{equation}{1}\tag{\theequation}}
\begin{document}
\title{Irreducibility of extensions of Laguerre Polynomials } 
\author[Laishram, Nair]{Shanta Laishram, Saranya G. Nair}
\address{Stat-Math Unit, India Statistical Institute\\
7, S. J. S. Sansanwal Marg, New Delhi, 110016, India}
\email{shanta@isid.ac.in}
\author[Shorey]{T. N. Shorey}
\address{Department of Mathematics\\
IIT Bombay, Powai, Mumbai 400076, India}
\email{saranya@math.iitb.ac.in}
\email{shorey@math.iitb.ac.in}
\thanks{2000 Mathematics Subject Classification: Primary 11A41, 11B25, 11N05, 11N13, 11C08, 11Z05.\\
Keywords: Irreducibility,  Laguerre Polynomials, Primes, Newton Polygons.}
\begin{abstract}
For integers $a_0,a_1,\ldots,a_n$ with $|a_0a_n|=1$ and either $\alpha =u$
with $1\leq u \leq 50$ or $\alpha=u+ \frac{1}{2}$ with $1 \leq u \leq 45$, we prove that
$\psi_n^{(\alpha)}(x;a_0,a_1,\cdots,a_n)$ is irreducible except for an explicit
finite set of pairs $(u,n)$. Furthermore all the exceptions other than $n=2^{12},\alpha=89/2$ are
necessary. The above result with $0\leq\alpha \leq 10$ is due to Filaseta, Finch
and Leidy and
with $\alpha \in \{-1/2,1/2\}$ due to Schur.
\end{abstract}
\maketitle
\pagenumbering{arabic}
\pagestyle{myheadings}
\markright{Irreducibility of extensions of Laguerre polynomials}
\markleft{Laishram, Nair and Shorey}

\section{Introduction}
For positive integer $n$ and real number $\alpha$, the generalized Laguerre polynomial is given by
 \begin{align*}
L_n^{(\alpha)}(x)=\displaystyle\sum_{j=0}^{n}\frac{(n+\alpha)(n-1+\alpha)\dots (j+1+\alpha)}{(n-j)!j!}(-x)^j
\end{align*}
and $L_n^{(0)}(x)$ is called Laguerre polynomial.  We shall restrict ourselves to the case when $\alpha$ is a rational number  written uniquely as 
  \begin{align}\label{alpha1}
  \alpha=\alpha(u)=u+\frac{a}{d}
  \end{align}
  where $u,a , d \in \mathbb{Z}$ and $d \geq 1$ such that  $a=0$ if $d=1$ and $1 \leq a <d,\ \gcd(a,d)=1$ if $d >1.$ Thus $\alpha=u$ if $d=1.$ The generalized Laguerre polynomial satisfies second order linear differential equation
\begin{align*}
x y^{''}+ (\alpha+1-x)y^{'}+ny=0,\ y=L_n^{(\alpha)}(x)
\end{align*}
and the difference equation
\begin{align*}
L_n^{(\alpha)}(x)-L_n^{(\alpha-1)}(x)=L_{n-1}^{(\alpha)}(x).
\end{align*} They have been studied in various branches of mathematics and mathematical physics and there is an extensive literature on them, see \cite{Sz75}. Schur \cite{Sch31}, \cite{Sch1} was the first to establish interesting and important algebraic properties of these polynomials. In particular, the irreducibility of these polynomials has been well investigated, see \cite{LaNaSh15} for an account of results in this connection. 
 Filaseta, Finch and Leidy \cite{Fil Influ} showed that  $L_n^{(\alpha)}(x)$ is irreducible for all $n$ and integers $\alpha$ with $0 \leq \alpha \leq 10$ except when $(n,\alpha) \in \{(2,2),(4,5),(2,7)\}.$ Laishram and Shorey \cite{LaSh11} extended it for integers $\alpha$ with $0 \leq \alpha \leq 50$ and showed that $L_n^{(\alpha)}(x)$ is irreducible for all $n$ except for $n=2, \alpha \in \{2,7,14,23,34,47\}$ and $n=4, \alpha \in \{5,23\}$ where it has a linear factor. Further explicit factorizations for these exceptions have been given.
In this paper, we consider more general polynomials than $L_n^{(\alpha)}(x)$. By irreducibility of a polynomial, we shall always mean that it is irreducible over $\mathbb{Q}.$ Observe that if a polynomial of degree $m$ has a factor of degree $k,$ then it has also a factor of degree $m-k.$ {\it Therefore given a polynomial of degree $m$, we always consider a factor of degree $k$ where $1 \leq k \leq \frac{m}{2}$}. Let $a_0,a_1, \ldots, a_n$ be integers with $|a_0|=|a_n|=1$ and $\alpha$ be a rational number given by \eqref{alpha1}. Then we define 
\begin{align*}
\psi_n^{(\alpha)}(x)= \psi_n^{(\alpha)}(x;a_0,a_1,\ldots,a_n)&= \displaystyle\sum_{j=0}^{n}a_j \binom{n}{j}(n+\alpha)\cdots (j+1+\alpha)d^{n-j}x^j\\
& =\displaystyle\sum_{j=0}^{n}a_j \binom{n}{j}(a+(u+n)d)\cdots (a+(u+j+1)d)x^j \numberthis \label{ps}
\end{align*}
by \eqref{alpha1}. We observe that 
\begin{align*}
 \psi_n^{(\alpha)}(x) = d^nn!L_n^{(\alpha)}\left(\frac{x}{d}\right)\ \text {if}\ a_j=(-1)^j
\end{align*}
and therefore, \ps \ with $a_j=(-1)^j$ is irreducible if and only if $L_n^{(\alpha)}(x)$ is irreducible. Thus the irreducibility of \ps \ implies the irreducibility of $L_n^{(\alpha)}(x).$ Filaseta, Finch and Leidy \cite{Fil Influ} showed that \ps \ with $0 \leq \alpha \leq 10$ and $|a_0a_n|=1$ is irreducible for all $n$ except $(n, \alpha) \in \{(2,2),(2,7),(4,4),(4,5),(8,8),(24,8)\}$ where \ps \ has a linear factor. They proved that these exceptions are necessary in the sense that there exist integers $a_0,a_1,\ldots, a_n$ with $|a_0|=|a_n|=1$ such that \ps \ has a linear factor. We extend their results for $\alpha \leq 50.$ Let 
\begin{align*}
\Omega=&\{(2,14), (2,23), (2,34),(2,47),(4,14), (4,20), (4,23), (6,44), (8,41), (12,24),\\&(16,16), (16,20), (16,24),(16,29), (24,24), (30,24),(32,32),(32,48), (40,24),\\& (48,24), (112,48),(120,24)\}.
\end{align*}
\begin{theorem}\label{Thm1}
Let $11 \leq \alpha \leq 50$ be an integer and $|a_0a_n|=1$. Then \ps \ is irreducible except when $(n,\alpha) \in \Omega$ where it may have a linear factor or $(n,\alpha)=(16,24)$ where it may have a quadratic factor. Further for every $(n,\alpha) \in \Omega$,  there exist integers $a_0,a_1, \ldots,a_n$ with $|a_0|=|a_n|=1$ such that \ps \ has a linear factor and further integers $a_0,a_1, \ldots,a_n$ with $|a_0|=|a_n|=1$ such that \ps \ has a quadratic factor for 
$(n,\alpha)=(16,24)$.  The  factors for each case are given in the following table.
\begin{center}
\captionof{table}{} 
  \begin{tabular}{|c|c|}
    \hline
    $x\pm b$ & $(n,\alpha)$ \\
    \hline
      $x\pm 2$  & $(16,16),(32,32)$  \\
    \hline
    $x\pm 4$  & $(2,14)$  \\
       \hline 
       $x\pm 6$  & $(2,34),(4,14),(4,20),(4,23),$\\ & $(12,24),(16,20),(24,24),(48,24)$ \\
        \hline
                  $x\pm 10$  & $(32,48)$  \\
       \hline
           $x\pm 14$  & $(8,41)$  \\
    \hline
    $x\pm 20$  & $(2,23)$  \\
     \hline
        $x\pm 30$  & $(6,44),(16,29),(30,24),(40,24),(120,24)$  \\
      \hline
     $x\pm 56$  & $(2,47)$ \\
            \hline
       $x\pm 70$ & $(112,48)$\\
        \hline
        $x\pm 150$, $x^2\pm 780$ & $(16,24)$\\
              \hline    
    \end{tabular}
    \end{center}
  \end{theorem}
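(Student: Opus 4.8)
The plan is to follow the Newton-polygon strategy that underlies the Filaseta--Finch--Leidy and Laishram--Shorey results, combined with prime-counting estimates from analytic number theory. Write $\psi_n^{(\alpha)}(x)=\sum_{j=0}^n a_j\binom{n}{j}\big(a+(u+n)d\big)\cdots\big(a+(u+j+1)d\big)x^j$ as in \eqref{ps}, and suppose for contradiction that it has a factor of degree $k$ with $1\le k\le n/2$, where $(n,\alpha)\notin\Omega$ (and $k\ge 2$ if $(n,\alpha)=(16,24)$). The central tool is that for a suitable prime $p$ (one with $p\nmid d$ and $p$ large relative to $k$ but not too large), the $p$-adic Newton polygon of $\psi_n^{(\alpha)}$ forces any factor to have degree divisible by the length of some edge, and a careful choice of $p$ rules out all small $k$. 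Concretely, one analyzes $v_p$ of the coefficient $c_j=a_j\binom{n}{j}\prod_{i=j+1}^{n}(a+(u+i)d)$: since $|a_0|=|a_n|=1$, the endpoints of the polygon are controlled, and the contribution of $p$ to the product $\prod(a+(u+i)d)$ and to $\binom{n}{j}$ can be estimated via Legendre's formula. The existence of a prime $p$ in the interval governing the relevant edge — typically a prime $p$ with $n<p\le \alpha+n$ (equivalently $p \mid (a+(u+i)d)$ for exactly one $i$) satisfying appropriate congruence and size constraints — is what breaks the argument.

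First I would set up the reduction exactly as in the cited works: it suffices to show that for each integer $k$ with $1\le k\le n/2$ there is a prime $p$ such that the portion of the Newton polygon of $\psi_n^{(\alpha)}(x)$ over $\Q_p$ lying above $[0,k]$ has all slopes with denominators not dividing $k$, or more precisely that no lattice point of the polygon lies at abscissa $k$ with the correct vertical position — this is the standard criterion forcing irreducibility up to degree $k$. Second, I would split into ranges of $n$. For $n$ large (say $n\ge n_0(\alpha)$ for an explicit $n_0$), I would invoke explicit prime-counting and prime-gap estimates (Bertrand-type results, and sharper versions such as those of Dusart or the results on primes in short intervals used in \cite{LaSh11}) to guarantee, uniformly in $k\le n/2$ and in the finitely many $\alpha\le 50$, the existence of a prime $p$ in the required interval with the required splitting behaviour. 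Third, for the finitely many remaining pairs $(n,\alpha)$ with $n<n_0(\alpha)$, I would run a direct computation: for each such pair and each $k\le n/2$, either exhibit the killing prime explicitly, or — when no such prime exists — determine by computer search over $(a_0,\dots,a_n)\in\{\pm1\}\times\{0,\pm1,\dots\}^{n-1}\times\{\pm1\}$ whether a genuine factor of degree $k$ can occur, thereby either removing the pair from consideration or confirming it belongs in $\Omega$ (with the explicit linear or quadratic factor recorded in the table).

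The construction of the factors in the table — the "necessity" half of the theorem — I would handle separately and more concretely. For a candidate linear factor $x\pm b$ at a pair $(n,\alpha)\in\Omega$, note that $\psi_n^{(\alpha)}(\mp b)=0$ is a single linear Diophantine condition on $a_0,\dots,a_n$ with coefficients the known integers $\binom{n}{j}\prod_{i>j}(a+(u+i)d)(\mp b)^j$; since $a_0$ and $a_n$ are forced to be $\pm1$, one must show this linear equation has a solution with all $a_j\in\{0,\pm1,\dots\}$ and, crucially, $a_0,a_n\in\{\pm1\}$ — this is where the specific value of $b$ matters, chosen so that the extreme terms $\pm(a+(u+n)d)\cdots\,d^n$ and $\pm b^n a_n$ have a gcd pattern making the intermediate coefficients absorbable. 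The quadratic factor $x^2\pm 780$ for $(16,24)$ is analogous but imposes two linear conditions (vanishing of the even-indexed and odd-indexed partial sums at $x^2=\mp780$). The main obstacle, I expect, is the uniformity in the large-$n$ case: one must make the prime-existence argument work simultaneously for every $k$ up to $n/2$ and every $\alpha$ in the range, which requires the explicit short-interval prime estimates to be strong enough at the transition point $n_0(\alpha)$ — and keeping $n_0(\alpha)$ small enough that the residual finite computation is feasible. This is precisely the delicate balance that the earlier papers navigated, and extending it from $\alpha\le 10$ to $\alpha\le 50$ (and to the half-integer case $\alpha=u+\frac12$) is where the real work lies; the anomalous pair $n=2^{12},\alpha=89/2$ flagged in the abstract as "not known to be necessary" is presumably a pair where the Newton-polygon obstruction fails but no explicit factor could be constructed, so it must be carried along as a genuine exception to the clean statement.
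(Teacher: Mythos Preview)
Your outline captures the Newton-polygon framework correctly, and the split into ``large $n$ via analytic estimates'' versus ``small $n$ via computation'' is the right shape. But the mechanism you propose for the large-$n$ regime is not the one that actually carries the proof, and I don't think your version closes.

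For $k\ge 2$ the paper does not argue from scratch: it invokes the earlier Laishram--Shorey result on the more general polynomial $f_n^{(\alpha)}(x)$ (their Lemma~\ref{from extensions}), which already reduces the problem to a finite explicit list of triples $(n,\alpha,k)$. Those residual triples are then killed one by one with specific primes via Lemmas~\ref{Glemma}, \ref{Glemma1}, \ref{Dumas}, \ref{fil}. Your proposal to redo this uniformly via prime-gap estimates would be a substantial duplication of prior work and is not what is done here.

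The more serious gap is in the $k=1$ case for large $n$. You frame the obstruction as ``find a prime $p$ in a required interval,'' but the criterion (Lemma~\ref{Glemma} with $k=1$) is not about an interval: one needs a prime $p\ge 3$ with $p\mid n(n+\alpha)$, $p\nmid (\alpha+1)$, and $p^2-p\ge \alpha$. No prime-gap theorem supplies this, because the condition is on the \emph{prime factors} of $n(n+\alpha)$, not on primes near $n$. The paper's argument is instead: if no such $p$ exists then $P(n(n+\alpha))$ is bounded (by $5$ or $7$ or $13$, depending on $\alpha$), so $n$ and $n+\alpha$ are both $S$-units for a small set $S$, and the equation $(n+\alpha)-n=\alpha$ becomes an $S$-unit equation. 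These are solved completely by de~Weger's theorem (Lemma~\ref{weger}), yielding a finite list of $n$ for each $\alpha$, which is then checked directly. The case $n=2^r>\alpha$ is handled separately by computing $NP_2(g_n^{(\alpha)})$ explicitly (a single edge from $(0,0)$ to $(n,n-1)$, hence irreducible). Without the $S$-unit input your sketch has no way to bound the exceptional $n$ when $k=1$, so the ``residual finite computation'' never becomes finite.

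Your treatment of the necessity direction (constructing the $a_j$ so that $x\pm b$ divides $\psi_n^{(\alpha)}$) is essentially right; the paper does exactly this, guided by Lemma~\ref{monic} to identify the admissible $b$ from the Newton polygons at small primes.
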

Thus the exceptions in Theorem \ref{Thm1} are necessary.

Next we consider $L_n^{(\alpha)}(x)$ and more generally $\psi_n^{(\alpha)}(x)$ when $\alpha$ is a rational number with denominator $2.$ Thus $\alpha=u+\frac{1}{2}$
 where $u$ is an integer. We recall that Hermite polynomials $H_{2n}(x)$ and $H_{2n+1}(x)$ are given by $$H_{2n}(x)=(-1)^n2^{2n}n!L_{n}^{(-\frac{1}{2})}(x^2) \ \text{and} \ H_{2n+1}(x)=(-1)^n2^{2n+1}n!x L_{n}^{(\frac{1}{2})}(x^2).$$ 
  Schur \cite{Sch31}, \cite{Sch1} proved that $L_n^{(-\frac{1}{2})}(x^2)$ and $L_n^{(\frac{1}{2})}(x^2)$ are irreducible and this implies the irreducibility of $H_{2n}(x)$ and $H_{2n+1}(x)/x.$ We observe that $u \in \{-1,0\}$  in these results. Further, Laishram, Nair and Shorey \cite{LaNaSh15} showed that $L_n^{(\alpha)}(x^2)$ with $\alpha=u+\frac{1}{2}$
 and $1 \leq u \leq 45$  are irreducible except when $(u,n)=(10,3).$ In such a case  $L_3^{(\frac{21}{2})}(x^2)=\frac{-1}{48}(2x^2-15)(4x^4-132x^2+1035)$. It follows immediately from the above results that $L_n^{(\alpha)}(x)$ with $\alpha=u+\frac{1}{2}$ and $-1 \leq u \leq 45$  are irreducible except when $(u,n)=(10,3)$ where it has a linear factor. Further in the next result, we compute the Galois group of $L_n^{(\alpha)}(x)$ when $\alpha=u+
 \frac{1}{2}.$ We prove 
 
 \begin{theorem}\label{GGLu}
 Let  $\alpha=u+
  \frac{1}{2}$ and $-1 \leq u\leq 45$. Then the Galois group of $L^{(\alpha)}_n(x)$ 
 is $S_n$ except when $(u,n)=(10,3)$ in which case the Galois group is $\mathbb{Z}_2$.
 \end{theorem}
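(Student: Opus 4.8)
The plan is to run the classical Schur--Coleman programme for Galois groups of orthogonal-type polynomials, using the irreducibility of $L_n^{(\alpha)}(x)$ as input. First I would pass to an integer model: taking $a_j=(-1)^j$ and $d=2$ in the defining expression for $\psi_n^{(\alpha)}$, the polynomial $\psi_n^{(\alpha)}(x)=2^n n!\,L_n^{(\alpha)}(x/2)$ has integer coefficients, leading coefficient $\pm1$, and the same splitting field as $L_n^{(\alpha)}(x)$; write $G_n$ for their common Galois group. For $(u,n)\neq(10,3)$ the cited result of Laishram, Nair and Shorey on $L_n^{(\alpha)}(x^2)$ shows that $L_n^{(\alpha)}(x)$ is irreducible, so $G_n$ is a transitive subgroup of $S_n$, and the task is to force $G_n=S_n$.

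The heart of the argument is to produce enough $p$-cycles in $G_n$ by Newton-polygon analysis. The coefficient of $x^j$ in $\psi_n^{(\alpha)}(x)$ is, up to sign, $\binom{n}{j}\prod_{i=j+1}^{n}(2i+2u+1)$, a binomial coefficient times a run of consecutive odd integers. For a prime $p$ one computes the $p$-adic valuations of these coefficients, reads off the $p$-adic Newton polygon, and appeals to the standard factorisation theorems of Dumas, Ore and Coleman: a suitably placed edge of the polygon forces an irreducible factor over $\mathbb{Q}_p$ whose degree is a multiple of $p$, and together with transitivity this yields a $p$-cycle in $G_n$. To run this for all large $n$ one needs, for each such $n$, a prime $p$ lying in a short window below $n$ (essentially $n/2<p\le n-3$) and avoiding the finitely many residue classes for which $p\mid 2i+2u+1$ at the small indices $i$; effective estimates on primes in short intervals supply such $p$. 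Now a transitive subgroup of $S_n$ containing a $p$-cycle with $p>n/2$ is primitive, and by Jordan's theorem a primitive subgroup of $S_n$ containing a $p$-cycle with $p\le n-3$ contains $A_n$; hence $G_n\supseteq A_n$ for all $n$ exceeding an explicit bound.

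To rule out $G_n=A_n$ I would invoke the classical Laguerre discriminant formula, which gives, up to a square in $\mathbb{Q}^{\times}$,
$$\operatorname{disc}L_n^{(\alpha)}\;\equiv\;\pm\prod_{j=1}^{n}j^{\,j}(j+\alpha)^{\,j-1}\;=\;\pm\,2^{-n(n-1)/2}\prod_{j=1}^{n}j^{\,j}\prod_{j=1}^{n}(2j+2u+1)^{\,j-1}$$
when $\alpha=u+\tfrac12$. It then suffices to exhibit a prime occurring here to an odd exponent. A prime $q$ in the upper half of the range $2u+3,\dots,2u+2n+1$ of odd integers divides exactly one factor $(2j+2u+1)^{j-1}$, namely the one with $j=(q-2u-1)/2$, and, being larger than $n$, divides no factor $j^{j}$; so one seeks such a $q$ with $(q-2u-3)/2$ odd, and again the existence of an admissible $q$ for all large $n$ follows from estimates on primes in short intervals (the $2$-adic valuation disposing of any residual cases). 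Hence $\operatorname{disc}L_n^{(\alpha)}$ is a non-square in $\mathbb{Q}^{\times}$, so $G_n\not\subseteq A_n$ and $G_n=S_n$.

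The finitely many $n$ left over for each of the finitely many values of $u$ are settled by direct computation: factor the discriminant to confirm it is a non-square and check primitivity together with a $p$-cycle, or a resolvent condition. The genuine exception is immediate, since from $L_3^{(21/2)}(x)=c\,(2x-15)(4x^2-132x+1035)$ the quadratic factor has discriminant $864=2^{5}\cdot3^{3}$, which is not a square, so its splitting field is $\mathbb{Q}(\sqrt6)$ and $G_3=\mathbb{Z}_2$. The step I expect to be the main obstacle is the uniform construction of $p$-cycles: for every admissible $n$ one must locate a single prime that simultaneously lies in the narrow interval demanded by Jordan's theorem and bears the correct divisibility relation to the coefficients of $\psi_n^{(\alpha)}$, and since the primes dividing $2i+2u+1$ shift with $u$ the prime-gap bookkeeping is delicate and must be supplemented by explicit checks wherever the generic argument narrowly fails; by comparison, the passage to $A_n$ via Jordan and the discriminant non-square step are routine.
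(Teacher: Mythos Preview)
Your proposal follows essentially the same strategy as the paper: irreducibility from \cite{LaNaSh15}, a Newton-polygon argument to force $A_n\subseteq G_n$ (the paper packages this via Hajir's criterion, Lemma~\ref{An}, rather than invoking Coleman and Jordan separately), the Schur discriminant formula combined with primes $\equiv 1+2u\pmod 4$ in short intervals to exclude $A_n$ (Lemma~\ref{bsq}), and direct computation for the finitely many small $n$ and the exception $(10,3)$. The one point where your sketch is imprecise is the description of the required prime as merely ``avoiding finitely many residue classes'': the actual condition is that some $l_p\in(n-p,\,p]$ satisfies $p\,\|\,(1+2(u+l_p))$, a positive hitting requirement rather than an avoidance condition, and the paper's main technical work (Lemma~\ref{GGu}) is exactly the ``delicate bookkeeping'' you anticipate, carried out by a parity analysis of the remainders $r_p=1+2(u+n)-t_p p$ for primes $p\in(2n/3,\,n-2)$.
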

 Laishram \cite{la15hardy} proved that the Galois group is $S_n$ when $u \in \{-1,0\}.$ Therefore we assume that $u \geq 1$ in the proof of Theorem \ref{GGLu}. By putting $a=1,d=2$ in \eqref{ps}, we have 
 \begin{align*}
 \psi_n^{(\alpha)}(x)=\displaystyle\sum_{j=0}^{n}a_j \binom{n}{j}(1+2(u+n))\cdots (1+2(u+j+1))x^j.
 \end{align*} 
 It follows from the results of Schur on $G_n^{(\alpha)}(x)$ stated in the next section before Lemma \ref{ psi greater than 2 with half} that  $\psi_n^{(\alpha)}(x^2)$ with $|a_0|=|a_n|=1$ is irreducible when $u =-1$ and also $u=0$ unless $2n+1$ is a power of $3$ where it may have a linear factor or quadratic factor. Let 
 \begin{align*}
 \Omega_1=&\{(2,2),(2,8),(2,2^9),(6,2^4),(9,4),(9,2^6),(10,3),(10,12),
 (10,24), (10,192),(16,8) \\
 &(21,2^4), (24,2^4),(30,2^6),(35,2^5),(35,2^{9}),(37,12),(37,36),(37,144),(44,2^{12})\}.
 \end{align*}
  \begin{theorem}\label{Thm2}
  Let $\alpha=u+\frac{1}{2}$
 where $1 \leq u \leq 45$  is an integer. Then $\psi_n^{(\alpha)}(x^2)$ with $|a_0a_n|=1$ is 
 irreducible except when $(u,n) \in \Omega_1$ where it may have a 
 quadratic factor or $(u,n)=(9,4)$ where it may have a factor of degree $4$. Further for every $(u,n) \in \Omega_1$ except for $(u,n)=(44,2^{12}),$ there exist integers $a_0,a_1, \ldots,a_n$ with $|a_0|=|a_n|=1$ such that $\psi_n^{(\alpha)}(x^2)$ has a quadratic factor. The quadratic factors are given in the following table.
 \begin{center}
 \captionof{table}{} 
\begin{tabular}{|c|c|}
 \hline
  $x^2\pm b$ & $(u,n)$ \\
 \hline
 $x^2\pm 3$ & $(9,4),(10,3),(24,2^4)$ \\ 
 \hline
  $x^2\pm 15$ & $(6,2^4),(10,12),(10,192),(21,2^4),(35,2^5)$ \\
 \hline
  $x^2 \pm 21$ & $(2,2),(2,8),(2,2^9),(9,2^6),(30,2^6),(37,36)$ \\
 \hline
  $x^2\pm 33$ & $(37,12),(37,144)$ \\
 \hline
 $x^2\pm 69$ & $(10,24)$  \\
 \hline
  $x^2\pm 1095$ & $(35,2^9)$  \\
 \hline
 $x^2\pm 7$ & $(16,8)$\\
 \hline
  \end{tabular}
  \end{center}
  \end{theorem}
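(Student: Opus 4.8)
The plan is to reduce irreducibility of $\psi_n^{(\alpha)}(x^2)$ to statements about prime divisors of the product $\prod_{i=j+1}^{n}(a+(u+i)d)$ with $d=2$, $a=1$, i.e.\ of the odd numbers $2(u+i)+1$, via a Newton polygon / Dumas–Coleman argument combined with the classical Schur-type analysis. Write $\Phi_n(x)=\sum_{j=0}^n a_j\binom nj c_{n,j}x^j$ where $c_{n,j}=\prod_{i=j+1}^{n}(2u+2i+1)$, so that $\psi_n^{(\alpha)}(x^2)=\Phi_n(x^2)$. A factor of $\Phi_n(x^2)$ of degree $2k$ over $\Q$ that is not of the form (factor of $\Phi_n$)$(x^2)$ is heavily constrained; the first reduction, exactly as in \cite{Fil Influ} and \cite{LaNaSh15}, is to show that any such factor forces a factor of $\Phi_n(x)$ of degree $k$, so that it suffices to prove $\Phi_n$ has no factor of degree $k$ for $1\le k\le n/2$, allowing $k=1$ (linear) as the residual exceptional case which becomes a \emph{quadratic} factor after the substitution $x\mapsto x^2$ (and $k=2$ giving the degree-$4$ exception at $(u,n)=(9,4)$).

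**Next I would** set up the Newton polygon machinery at each prime $p$. For a prime $p$, the $p$-adic valuation of the coefficient $a_j\binom nj c_{n,j}$ is controlled by $v_p(c_{n,j})=\sum_{i=j+1}^n v_p(2u+2i+1)$, and one compares this with the $p$-adic valuation of the analogous product appearing for the genuine Laguerre polynomial. The key input, available from the cited work of Schur and from \cite{LaSh11,LaNaSh15}, is a lower bound of the form: if $\Phi_n$ has a factor of degree $k$, then there is a prime $p>k$ dividing one of the numbers $2(u+n),2(u+n)-1,\dots$ down to a bounded point, equivalently a prime $p>k$ with $p\mid (2u+2i+1)$ for some $i$ in the top block, and moreover the multiplicity pattern of such primes over the interval $[n-k+1,n]$ must be "balanced" in a way that the Newton polygon at $p$ (with respect to $p$) rules out unless $p\le 2n+2u$ is small relative to $n$. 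Running the standard estimates on $\pi(x)$ and on products of consecutive odd integers (Sylvester–Erdős type, as in \cite{LaSh11}), one gets that for $n$ larger than an explicit bound depending only on $u\le 45$, every prime window argument applies and no factor of degree $k\ge 2$ can exist, while a degree-$1$ factor survives only for finitely many $n$, whose determination requires an $abc$-free elementary search.

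**The main obstacle**, and the bulk of the work, is the finite but large exceptional range: after the asymptotic argument kills all $n\ge N_0(u)$, one is left with a genuinely finite — but very large — set of pairs $(u,n)$ with $u\le 45$ and $n<N_0(u)$ (the presence of $n=2^{12}$ at $u=44$ shows $N_0$ can be in the thousands), and for each such pair one must decide \emph{exactly} whether $\Phi_n$ can have a factor of degree $k$ for each relevant $k$. This is handled by a case analysis on the possible "sets of primes" dividing the relevant products, i.e.\ solving, for each $(u,n,k)$, whether the product of $k$ suitably chosen factors from $\{2u+2i+1\}$ can be made (up to the unit ambiguity in the $a_j$) into a value compatible with a factor — concretely, whether a certain system of congruences / a norm equation has a solution, which is what produces the explicit constants $b$ in the table (e.g.\ $x^2\pm 21$ corresponding to the product $3\cdot 7$, $x^2\pm 1095=x^2\pm 3\cdot 5\cdot 73$, etc.). To show the exceptions are \emph{necessary}, one exhibits, for each listed $(u,n)$ except $(44,2^{12})$, an explicit choice of signs/units $a_j=\pm 1$ making $x^2+b$ (or $x^2-b$) an actual factor; this is a backward construction matching the leading coefficient $|a_0a_n|=1$ constraint. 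The single pair $(44,2^{12})$ is the case where the Newton polygon obstruction is \emph{not} met but no construction is found — mirroring the remark in the abstract that this exception may be spurious — and I would flag it as the one place the method gives only a one-sided conclusion.
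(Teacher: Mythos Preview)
Your central reduction is incorrect, and this is a genuine gap. You claim that a factor of $\Phi_n(x^2)$ of degree $2k$ ``forces a factor of $\Phi_n(x)$ of degree $k$,'' and conclude that it suffices to prove $\Phi_n(x)$ has no factor of degree $k$ for $1\le k\le n/2$. But irreducibility of $\Phi_n(x)$ does \emph{not} imply irreducibility of $\Phi_n(x^2)$: an irreducible factor $g(x)$ of $\Phi_n(x^2)$ need not be even, and when it is not, what one gets is $g(x)g(-x)=H(x^2)$ with $H\mid \Phi_n$ of degree $\deg g$, not $\frac{1}{2}\deg g$. Concretely, $x^2+x+1$ is irreducible over $\Q$, yet $x^4+x^2+1=(x^2+x+1)(x^2-x+1)$. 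In the present setting the same phenomenon occurs: the paper's treatment of $(u,n)=(38,2)$ explicitly tests whether $x^4+162ax^2\pm 6399$ can split as $(x^2+A_1x+A_0)(x^2-A_1x+A_0)$ with $A_1\neq 0$, which is precisely the ``odd'' factorisation your reduction ignores. So your plan, as written, would at best prove Corollary~\ref{coro 2} (about $\psi_n^{(\alpha)}(x)$), not Theorem~\ref{Thm2}.

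The paper avoids this by never making that reduction. It works directly with $\psi_n^{(\alpha)}(x^2)$ and $G_n^{(\alpha)}(x^2)$ as degree-$2n$ polynomials in $x$: Lemma~\ref{irred} (the analogue of \cite[Lemma~5.2]{LaNaSh15}) gives a Newton-polygon criterion ruling out factors of degree in $\{2k-1,2k\}$ for $\psi_n^{(\alpha)}(x^2)$; Lemma~\ref{ psi greater than 2 with half} (imported from \cite{LaNaSh15}) already disposes of factors of degree $\ge 3$ for $G_n^{(\alpha)}(x^2)$ outside a short explicit list; and Lemma~\ref{psi linear half} (also from \cite{LaNaSh15}) handles degrees $1$ and $2$ outside the finite set $T_0$. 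The remaining finitely many pairs are then treated one by one with Lemmas~\ref{Dumas} and~\ref{fil} at specific primes, plus the explicit quartic computation at $(38,2)$. Your sketch of ``Sylvester--Erd\H{o}s type estimates to kill $n\ge N_0(u)$'' is therefore both misplaced (the reduction it would feed into is invalid) and unnecessary (the heavy lifting for large $n$ is already contained in \cite{LaNaSh15}, which you should be invoking rather than redoing). To repair your argument you would need to run the Newton-polygon analysis on $\psi_n^{(\alpha)}(x^2)$ itself, tracking factors of \emph{every} degree up to $n$, including odd degrees, exactly as the paper does.
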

We have not been able to find a factorization for $(u,n)=(44,2^{12})$ since $n$ is very large. We observe that the irreducibility of $\psi_n^{(\alpha)}(x^2)$ implies the irreducibility of \ps. 
  \begin{coro}\label{coro 2}
   Let $\alpha=u+\frac{1}{2}$
   where $1 \leq u \leq 45$  is an integer. Then $\psi_n^{(\alpha)}(x)$ with $|a_0a_n|=1$ is irreducible except when $(u,n) \in \Omega_1$  where it may have a linear factor or $(u,n)=(9,4)$ where it may have a quadratic factor. Further for every $(u,n) \in \Omega_1$ except for $(u,n)=(44,2^{12}),$ there exist integers $a_0,a_1, \ldots,a_n$ with $|a_0|=|a_n|=1$  such that $\psi_n^{(\alpha)}(x)$ has a linear factor. The linear factors are obtained from the above table with $x^2$ replaced by $x.$
  \end{coro}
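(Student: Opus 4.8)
The plan is to obtain Corollary \ref{coro 2} as a formal consequence of Theorem \ref{Thm2} by means of the substitution $x\mapsto x^{2}$. The one thing to record first is the relationship between factorizations of $\psi_n^{(\alpha)}(x)$ and of $\psi_n^{(\alpha)}(x^{2})$: by \eqref{ps} the latter is obtained from the former by replacing $x$ with $x^{2}$, and both use the \emph{same} coefficients $a_{0},a_{1},\dots ,a_{n}$. Hence, if $g(x)\mid\psi_n^{(\alpha)}(x)$ with $\deg g=k$, then $g(x^{2})\mid\psi_n^{(\alpha)}(x^{2})$ with $\deg g(x^{2})=2k$; and, reducing $\psi_n^{(\alpha)}(x^{2})$ modulo $x^{2}\pm b$ (i.e.\ substituting $x^{2}=\mp b$), one gets the equivalence that $x\pm b$ divides $\psi_n^{(\alpha)}(x)$ if and only if $x^{2}\pm b$ divides $\psi_n^{(\alpha)}(x^{2})$, with the same $a_{0},\dots ,a_{n}$.

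Next I would prove the irreducibility part. Following the convention of the introduction, suppose $\psi_n^{(\alpha)}(x)$, which has degree $n$, has a factor of degree $k$ with $1\le k\le n/2$. Then $\psi_n^{(\alpha)}(x^{2})$, which has degree $2n$, has a factor of degree $2k$ with $2\le 2k\le n$. If $(u,n)\notin\Omega_{1}$, Theorem \ref{Thm2} says $\psi_n^{(\alpha)}(x^{2})$ is irreducible, a contradiction, so $\psi_n^{(\alpha)}(x)$ is irreducible. If $(u,n)\in\Omega_{1}$ and $(u,n)\ne(9,4)$, then by Theorem \ref{Thm2} the only possible factor degree of $\psi_n^{(\alpha)}(x^{2})$ in the range $[1,n]$ is $2$, forcing $2k=2$, so $\psi_n^{(\alpha)}(x)$ has at most a linear factor. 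If $(u,n)=(9,4)$, Theorem \ref{Thm2} allows $\psi_{4}^{(\alpha)}(x^{2})$ only factor degrees $2$ and $4$ in the range $[1,4]$, hence $2k\in\{2,4\}$ and $\psi_{4}^{(\alpha)}(x)$ has at most a quadratic factor. This reproduces precisely the list of exceptions asserted in the corollary.

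For the necessity of the exceptions, nothing new is needed beyond Theorem \ref{Thm2}. For each $(u,n)\in\Omega_{1}$ with $(u,n)\ne(44,2^{12})$, that theorem exhibits integers $a_{0},a_{1},\dots ,a_{n}$ with $|a_{0}|=|a_{n}|=1$ for which $\psi_n^{(\alpha)}(x^{2})$ has the quadratic factor $x^{2}\pm b$ displayed in its table; by the equivalence above, the very same $a_{0},\dots ,a_{n}$ make $x\pm b$ a factor of $\psi_n^{(\alpha)}(x)$, and since $n\ge 2$ for every $(u,n)\in\Omega_{1}$ this is a genuine nontrivial factorization. Thus the exceptions are necessary, with the linear factors read off from the table of Theorem \ref{Thm2} upon replacing $x^{2}$ by $x$.

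I anticipate no real obstacle: the corollary is entirely formal once Theorem \ref{Thm2} is in hand. The only matters requiring care are the bookkeeping of factor degrees under $x\mapsto x^{2}$ and the check that the exceptional set $\Omega_{1}$, the special role of $(9,4)$, and the two tables match up; all the substantive difficulty, namely the Newton polygon analysis and the estimates for primes, is contained in the proof of Theorem \ref{Thm2} itself.
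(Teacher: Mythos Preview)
Your proposal is correct and follows essentially the same approach as the paper: both deduce the corollary from Theorem \ref{Thm2} via the observation that a degree-$k$ factor of $\psi_n^{(\alpha)}(x)$ yields a degree-$2k$ factor of $\psi_n^{(\alpha)}(x^2)$, and both read off the linear factors from the quadratic factors in the table. Your write-up is in fact more explicit than the paper's (you spell out the case $(u,n)=(9,4)$ and the equivalence $x\pm b\mid\psi_n^{(\alpha)}(x)\iff x^2\pm b\mid\psi_n^{(\alpha)}(x^2)$), but the underlying argument is identical.
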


  \section{Preliminaries}
  
  The proofs of Theorems \ref{Thm1}
 and \ref{Thm2}  depend on Newton polygons which we introduce now. Let $f(x)=\displaystyle\sum_{j=0}^{n}a_jx^j \in \mathbb{Z}[x]$ with $a_0a_m \neq 0$  and $p$ be a prime number. Let $S$ be the set of points in the extended plane
  $$S =\{(0,\nu(a_m)),(1,\nu(a_{m-1})),(2,\nu(a_{m-2})),\ldots,(m,\nu(a_0))\}$$ where for an integer $r,$ we write $\nu(r)=\nu_p(r)$ for the highest power of $p$ dividing $r$ and we put $\nu(0)=\infty.$ Consider the lower edges along the convex hull of these points. The left most endpoint
 is $(0,\nu(a_m))$ and the right most endpoint is $(m,\nu(a_0))$. The endpoints of each edge
 belong to S and the slopes of the edges increase from left to right. When referring
 to the edges of a Newton polygon, we shall not allow two different edges to have the
 same slope. The polygonal path formed by these edges is called the Newton polygon
 of $ f(x)$ with respect to the prime $p$ and we denote it by $NP_p(f)$. The endpoints of the
 edges on $NP_p(f)$ are called the vertices of $ NP_p(f)$. By a lattice point on an edge, we mean a
 lattice point on the edge
 other than the vertices of the edge. We denote the Newton function of
 $f$ with respect to the prime $p$ as the real function $f_p(x)$ on the interval $ [0,m]$ which
 has the polygonal path formed by these edges as its graph. Hence $f_p(i) = \nu(a_{m-i})$
 for $i = 0,$ $ m$ and at all points $i$ such that $(i,\nu(a_{m-i}))$ is a vertex of $NP_p(f)$. We need the following result of Dumas \cite{Dum06}.
 \begin{lemma}\label{Dumas}
 Let $g(x)$ and $h(x)$ be in $\mathbb{Z}[x]$ with $g(0)h(0)\neq 0$ and let $p$ be a prime. Let $k$ be a non-negative integer such that $p^k$ divides the leading coefficient of $g(x)h(x)$ but $p^{k+1}$ does not. Then the edges of the Newton polygon for $g(x)h(x)$ with respect to $p$ can be formed by constructing a polygonal path beginning at $(0,k)$ and using translates of the edges in the Newton polygons for $g(x)$ and $h(x)$ with respect to the prime p, using exactly one translate for each edge of the Newton polygons for $g(x)$ and $h(x)$. Necessarily, the translated edges are translated in such a way as to form a polygonal path with the slopes of the edges increasing.
 \end{lemma}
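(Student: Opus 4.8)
The plan is to reduce the statement to a root-theoretic reading of the Newton polygon, obtained by working $p$-adically. Fix once and for all an extension of the $p$-adic valuation $\nu=\nu_p$ to the algebraic closure $\overline{\mathbb{Q}_p}$; it takes values in $\mathbb{Q}$. The principal step is to prove the following, valid for any $f(x)=\sum_{j=0}^{m}a_jx^j\in\mathbb{Z}[x]$ of degree $m$ with $a_0\neq 0$: writing $f(x)=a_m\prod_{t=1}^{m}(x-\theta_t)$ with $\theta_t\in\overline{\mathbb{Q}_p}$ labelled so that $\nu(\theta_1)\le\nu(\theta_2)\le\cdots\le\nu(\theta_m)$, the Newton polygon $NP_p(f)$ is precisely the polygonal path that starts at $(0,\nu(a_m))$ and whose $i$-th edge, running from abscissa $i-1$ to abscissa $i$, has slope $\nu(\theta_i)$; equivalently, after amalgamating edges of equal slope as the paper's convention demands, $NP_p(f)$ has for each value $\sigma$ assumed by the $\nu(\theta_t)$ exactly one edge of slope $\sigma$, of horizontal length $\#\{t:\nu(\theta_t)=\sigma\}$, and no other edges.

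To prove this I would begin from the identity $a_{m-i}=a_m(-1)^i e_i(\theta_1,\dots,\theta_m)$, with $e_i$ the $i$-th elementary symmetric function, which gives $\nu(a_{m-i})=\nu(a_m)+\nu(e_i(\theta))$ for $0\le i\le m$. Each monomial of $e_i(\theta)$ is a product of $i$ distinct $\theta_t$, so by the ultrametric inequality $\nu(e_i(\theta))\ge\nu(\theta_1)+\cdots+\nu(\theta_i)=:\sigma_i$; that is, the point $A_i:=(i,\nu(a_{m-i}))$ lies on or above the point $P_i:=(i,\nu(a_m)+\sigma_i)$. At each breakpoint index $i$ — meaning $i\in\{0,m\}$ or $\nu(\theta_i)<\nu(\theta_{i+1})$ — the monomial $\theta_1\cdots\theta_i$ is the unique term of $e_i(\theta)$ of smallest valuation, so there $\nu(e_i(\theta))=\sigma_i$ and $A_i=P_i$. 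Since the slopes $\nu(\theta_i)$ are non-decreasing, the path $P_0P_1\cdots P_m$ is convex, with vertices exactly at the breakpoints. As every $A_i$ lies on or above this convex path, so does its lower convex hull $NP_p(f)$ (Jensen's inequality); and between two consecutive breakpoints $NP_p(f)$ lies on or below the chord through the two corresponding points $A_i$, which is the path $P_\bullet$ itself there. Hence $NP_p(f)=P_\bullet$, as claimed.

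Granting this, the lemma is immediate. Let $b$ and $c$ be the leading coefficients of $g$ and $h$; the leading coefficient of $g(x)h(x)$ is $bc$, so $k=\nu_p(bc)=\nu_p(b)+\nu_p(c)$. Over $\overline{\mathbb{Q}_p}$ the root multiset of $g(x)h(x)$ is the union of those of $g$ and $h$. Applying the description above to $g$, to $h$, and to $gh$: the polygons $NP_p(g)$ and $NP_p(h)$ start at heights $\nu_p(b)$ and $\nu_p(c)$ and encode the root valuations of $g$ and of $h$, while $NP_p(gh)$ starts at height $k=\nu_p(b)+\nu_p(c)$ and encodes the root valuations of the union. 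For each slope $\sigma$ the total horizontal length of slope $\sigma$ in $NP_p(gh)$ equals $\#\{\text{roots of }g\text{ with valuation }\sigma\}+\#\{\text{roots of }h\text{ with valuation }\sigma\}$, i.e. the length of the (at most one) slope-$\sigma$ edge of $NP_p(g)$ plus that of $NP_p(h)$. Therefore $NP_p(gh)$ is obtained by starting at $(0,k)$ and laying down, in order of increasing slope, exactly one translate of each edge of $NP_p(g)$ and of $NP_p(h)$, two translates of equal slope merely abutting into a single edge — precisely the assertion.

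I expect the only genuine difficulty to be expository: making the convexity step in the second paragraph airtight (the lower hull of a finite point set all lying weakly above a convex path and meeting it at that path's vertices must coincide with the path), and carrying the paper's reversed indexing $A_i=(i,\nu(a_{m-i}))$ and its equal-slope amalgamation convention correctly through the bookkeeping. Beyond the standard existence and uniqueness of the valuation on $\overline{\mathbb{Q}_p}$, nothing deep is involved.
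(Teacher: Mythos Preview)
The paper does not prove this lemma at all: it merely states it and attributes it to Dumas \cite{Dum06}, so there is no in-paper argument to compare against. Your proposal supplies a complete, correct proof via the standard root-theoretic interpretation of the Newton polygon over $\overline{\mathbb{Q}_p}$, which is exactly how this result is usually established in modern expositions. The convexity step you flag as the only delicate point is handled correctly: once you know the lower hull lies on or above the convex path $P_\bullet$ and coincides with it at every breakpoint, convexity of the hull between consecutive breakpoints forces equality with the straight segment of $P_\bullet$ there. Nothing is missing.
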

 Now we state a lemma of Filaseta \cite{Fil94} which is derived from Lemma \ref{Dumas}.
 \begin{lemma}\label{fil}
  Let $l, k,m$ be integers with $m \geq 2k > 2l \geq 0$. Suppose $g(x) =\displaystyle\sum_{j=0}^{m}b_jx^j \in
 \Z[x] $ and $p$ be a prime such that $ p \nmid b_m$ and $p\mid b_j $ for $0 \leq j \leq m-l-1$ and the
 right most edge of the $ NP_p(g)$ has slope $ <\frac{1}{k}$. Then for any integers $a_0, a_1,\ldots, a_m$
 with $p\nmid a_0a_m$, the polynomial $f(x) =\displaystyle\sum_{j=0}^{m} a_jb_jx^j$ cannot have a factor with degree in $[l + 1, k]$.
 \end{lemma}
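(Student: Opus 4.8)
The plan is to argue by contradiction through Lemma~\ref{Dumas}. Suppose $f(x)$ has a factor of degree $t$ with $l+1\le t\le k$; by Gauss's lemma we may write $f=wv$ with $w,v\in\Z[x]$ and $\deg w=t$. Since $f(0)=a_0b_0\neq 0$ we have $w(0)v(0)\neq 0$, and since $p\nmid a_mb_m$ the prime $p$ divides neither leading coefficient. Hence $NP_p(w)$ is a convex polygonal path from $(0,0)$ to $(t,\nu_p(w(0)))$, $NP_p(v)$ one from $(0,0)$ to $(m-t,\nu_p(v(0)))$, and by Lemma~\ref{Dumas} the edges of $NP_p(f)$ are obtained by assembling the translated edges of $NP_p(w)$ and $NP_p(v)$ in order of increasing slope. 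In particular the slope of each edge of $NP_p(w)$ is at most that of the rightmost edge of $NP_p(f)$, and the slope-$0$ part of $NP_p(w)$ is absorbed into the slope-$0$ part of $NP_p(f)$.

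Next I would record two features of $NP_p(f)$ coming from the two hypotheses on $g$. First, since $\nu_p(a_jb_j)\ge \nu_p(b_j)$ for every $j$, with equality at $j=0$ and $j=m$ (because $p\nmid a_0$ and $p\nmid a_mb_m$), the polygon $NP_p(f)$ lies on or above $NP_p(g)$ and meets it at both endpoints $(0,0)$ and $(m,\nu_p(b_0))$. As the slopes of $NP_p(g)$ increase from left to right, $NP_p(g)$ lies on or above the line through $(m,\nu_p(b_0))$ whose slope is that of the rightmost edge of $NP_p(g)$; hence so does $NP_p(f)$, and since $NP_p(f)$ is convex and passes through $(m,\nu_p(b_0))$, the slope of its rightmost edge is at most that of the rightmost edge of $NP_p(g)$, so $<1/k$; consequently \emph{every} edge of $NP_p(f)$ has slope $<1/k$. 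Second, from $p\mid b_j$ for $0\le j\le m-l-1$ we get $\nu_p(a_{m-i}b_{m-i})\ge 1$ whenever $l+1\le i\le m$, while this valuation is $0$ at $i=0$; so the leftmost (slope-$0$) edge of $NP_p(f)$, if present, reaches no further right than the point of abscissa $s$, where $s\le l$ is the largest index with $\nu_p(a_{m-s}b_{m-s})=0$.

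The contradiction then comes out of $NP_p(w)$ alone. This path joins $(0,0)$ to $(t,\nu_p(w(0)))$, is convex with every slope in $[0,1/k)$, and has horizontal extent $t\le k$; since its maximal slope is at least its average slope $\nu_p(w(0))/t$, we get $\nu_p(w(0))<t/k\le 1$, and as $\nu_p(w(0))$ is a non-negative integer, $\nu_p(w(0))=0$. Thus $NP_p(w)$ joins $(0,0)$ to $(t,0)$ and, being a lower convex hull, is the single slope-$0$ edge of horizontal extent $t$. By Lemma~\ref{Dumas} this edge is absorbed into the slope-$0$ part of $NP_p(f)$, forcing the latter to have horizontal extent at least $t\ge l+1$, which contradicts $s\le l$. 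Hence no factor of degree in $[l+1,k]$ exists.

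In effect the two hypotheses act as a pincer: the divisibility $p\mid b_j$ for $j\le m-l-1$ keeps the slope-$0$ part of $NP_p(f)$ of length at most $l$, while the shallow rightmost edge of $NP_p(g)$ keeps every edge of $NP_p(f)$ --- hence of any factor's Newton polygon --- of slope $<1/k$, and a convex polygon of horizontal extent $\le k$ starting at $(0,0)$ with integer vertex heights and all slopes $<1/k$ must be flat. The only step that takes any care is the transfer of the rightmost-slope bound from $NP_p(g)$ to $NP_p(f)$, which rests on the two polygons sharing the endpoint $(m,\nu_p(b_0))$ and on $NP_p(f)$ dominating $NP_p(g)$.
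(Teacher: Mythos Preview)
Your argument is correct. The paper does not actually prove this lemma: it simply quotes it as a result of Filaseta \cite{Fil94} and remarks that it ``is derived from Lemma~\ref{Dumas}''. Your proof carries out exactly that derivation. The one step requiring care --- transferring the bound on the rightmost slope from $NP_p(g)$ to $NP_p(f)$ --- is handled correctly: since $\nu_p(a_jb_j)\ge\nu_p(b_j)$ with equality at $j=0,m$, the Newton function of $f$ dominates that of $g$ and shares both endpoints, so convexity forces the rightmost slope of $NP_p(f)$ to be at most that of $NP_p(g)$, hence $<1/k$. The remainder (the average-slope bound giving $\nu_p(w(0))=0$, the consequent flatness of $NP_p(w)$, and the contradiction with the length of the slope-$0$ edge of $NP_p(f)$ via Lemma~\ref{Dumas}) is straightforward.
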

 Next we state some earlier results on polynomials  which are more general than \ps. When $\alpha$ is an integer, the polynomials \ps \ are a special case of the following class of polynomials first considered
 by Schur. Let $n\ge 1, a\ge 0$ and $a_0, a_1, \ldots, a_n$ be integers. The \emph{generalized Schur polynomials} are
 defined as
 \begin{align}\label{f(x)}
 f_n^{(\alpha)}(x):=f_n^{(\alpha)}(x;a_0, a_1, \cdots, a_n)=\sum^n_{j=0} a_j\frac{x^j}{(j+\alpha)!}.
 \end{align}
We observe that $ (n+\alpha)!f_n^{(\alpha)}(x)=$ \ps \ if $a_j$ are replaced by $a_j\binom{n}{j}$ in the definition of $ f_n^{(\alpha)}(x).$
 
 Schur \cite{Sch1}, \cite{Sch31} proved that $ f_n^{(\alpha)}(x)$ with $\alpha \in \{0,1\}$ and $|a_0a_n|=1$ is irreducible unless $\alpha=1$ and $n+1=2^r$ for some $r$ where it may have a linear factor or $n=8$ where it may have a quadratic
 factor. Also for $\alpha=2$ and many other values of $\alpha$ the polynomial $ f_n^{(\alpha)}(x)$ may have a linear factor. Laishram and Shorey \cite{LaSh11} proved that
 \begin{lemma}\label{from extensions}
 Let $2 \leq k \leq \frac{n}{2}$
 and $a_0,a_1,\dots a_n \in \mathbb{Z}$  with $|a_0a_n|=1.$ Asssume that $0 \leq \alpha \leq 40$\ if $k=2$ and $0 \leq \alpha \leq 50$ if $k >2.$ Then $ f_n^{(\alpha)}(x)$ has no factor of degree $k$ except possibly when $(n,k,\alpha)$ is given by 
 \begin{align*}
 k=3,\ & (n,\alpha) \in \{(7,3),(8,2),(12,4),(46,4),(14,12),(17,11),(53,12)\}\\
 k=4,\ & (n, \alpha) \in \{(18,9),(18,10),(56,10),(16,12),(17,11),(38,13),(39,18)\}\\
 k=5,\ & (n,\alpha) \in \{(17,11),(19,9),(40,12)\}
 \end{align*}
  and $k=2$ with $(n, \alpha)$ satisfying
 \begin{itemize}
 \item[(i)]  $n+\alpha \leq 100$\\
 \item[(ii)] $\alpha \in \{13,14,19,33\}, \ n+\alpha \in \{126,225,2401,4375\}$\\
 \item[(iii)] $(n,\alpha) \in \{(112,9),(233,10),(234,9)\}$
  \end{itemize}
  together with the following set of pairs $(n,\alpha)$ given by the table:
  \begin{center}
   \captionof{table}{} 
  \begin{tabular}{|c|c||c|c||c|c|} \hline
  $\alpha$ & $n+\alpha$ & $\alpha$ & $n+\alpha$ & $\alpha$ & $n+\alpha$ \\ \hline
  $12$ & $169, 729$ & $15, 16$ & $289$ & $17$ & $513$ \\ \hline
  $18$ & $361, 513,  1216$ & $19, 20$ & $243$ & $21$ & $529$ \\ \hline
  $21, 22$ & $121, 576$ & $24$ & $325, 625, 676$ & $27$ & $784$ \\ \hline
  $28$ & $145$ & $29$ & $961$ & $31$ & $243$ \\ \hline
  $32$ & $243, 289, 1089$ & $33$ & $136, 256, 289, 5832$ & $36$ & $1369$ \\ \hline
  $38$ & $325, 625, 676$ & $39$ & $1025, 6561$ & $40$ & $288$ \\ \hline
  \end{tabular}
  \end{center}
    \end{lemma}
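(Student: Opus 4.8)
\emph{Proof idea.} The plan is to run the Newton polygon method of Schur and Filaseta over the enlarged ranges of $\alpha$ and to reduce the problem to a bounded family of triples $(n,k,\alpha)$ that is then settled by direct computation. Multiplying by $(n+\alpha)!$ replaces $f_n^{(\alpha)}(x)$ by the integer polynomial
\[
F(x)=\sum_{j=0}^{n}a_j b_j x^{j},\qquad b_j:=(n+\alpha)(n-1+\alpha)\cdots(j+1+\alpha)=\frac{(n+\alpha)!}{(j+\alpha)!},
\]
whose leading coefficient is $a_n=\pm1$, whose constant term is $\pm(n+\alpha)!/\alpha!$, and which has a factor of degree $k$ over $\Q$ exactly when $f_n^{(\alpha)}(x)$ does. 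We aim to apply Lemma \ref{fil} with $m=n$, $l=k-1$ (so $m\ge 2k>2l$, using $k\le n/2$), $g(x)=\sum_{j=0}^{n}b_j x^{j}$, and the given $a_j$ (for which $p\nmid a_0a_n$): \emph{if} there is a prime $p$ with $p\mid b_j$ for $0\le j\le n-k$ and with the rightmost edge of $NP_p(g)$ of slope $<1/k$, \emph{then} $f_n^{(\alpha)}(x)$ has no factor of degree $k$ for any admissible $a_0,\dots,a_n$. Call such a prime \emph{$k$-good for $(n,\alpha)$}. Since $\nu_p(b_j)=\sum_{t\ge1}\#\{\,j<i\le n:\ p^{t}\mid i+\alpha\,\}$, the divisibility "$p\mid b_j$ for $0\le j\le n-k$'' is equivalent to "$p$ divides one of the $k$ largest terms $\alpha+n-k+1,\dots,\alpha+n$ of the block $\alpha+1,\dots,\alpha+n$'', and a direct computation gives that the rightmost edge of $NP_p(g)$ has slope $\max_{1\le j\le n}\nu_p\big((\alpha+1)\cdots(\alpha+j)\big)/j$; thus $p$ is $k$-good precisely when it divides one of $\alpha+n-k+1,\dots,\alpha+n$ and $k\,\nu_p\big((\alpha+1)\cdots(\alpha+j)\big)<j$ for all $j\le n$.

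The core is the number-theoretic claim that a $k$-good prime exists for every $(n,k,\alpha)$ in the stated range apart from an explicit finite set. The second defining condition concerns only the small end of the block and holds in particular whenever $p>\alpha+k$ and $p^{2}>\alpha+n$ (then the multiples of $p$ in $\alpha+1,\dots,\alpha+n$ are more than $k$ apart, none of the first $k$ terms is divisible by $p$, and $p^2$ divides no term). Hence if $(n,k,\alpha)$ admits no $k$-good prime then, once $n$ exceeds a small bound depending on $k$ and $\alpha$, each of the $k$ consecutive integers $\alpha+n-k+1,\dots,\alpha+n$ — all of which exceed $k$ because $n\ge2k$ — has all its prime factors $\le\alpha+k$, i.e.\ is $(\alpha+k)$-smooth. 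By Sylvester's theorem and its refinements on the prime factors of products of consecutive integers — and, for $k=2$, already by St\o rmer's theorem on consecutive $S$-smooth integers — for each fixed $(k,\alpha)$ there are only finitely many such blocks and they can be enumerated effectively. Doing this for $k\ge3$ and $\alpha\le50$ one finds that no block survives as soon as $k\ge6$, so $f_n^{(\alpha)}(x)$ has no factor of degree $k$ for $6\le k\le n/2$; for $k\in\{3,4,5\}$ (and, with $\alpha\le40$, for $k=2$) one is left with an explicit finite list of candidate triples.

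For each surviving candidate one then argues by hand: one tests every prime $p$ dividing $(\alpha+n-k+1)\cdots(\alpha+n)$ against the slope condition, and when that fails one allows $l<k-1$ and uses the full shape of $NP_p(g)$ for several primes together via Lemma \ref{Dumas}, or a resultant argument; candidates for which a factor of degree $k$ is thereby excluded are discarded, and what remains is the exceptional list in the statement. I expect the real difficulty to be carrying out the middle step \emph{sharply}: the exceptional values of $n+\alpha$ reach into the thousands (for instance $n+\alpha\in\{2401,4375,6561\}$), so one must determine the finitely many runs of $(\alpha+k)$-smooth consecutive integers precisely rather than through crude estimates, and must treat separately the cases where a prime fails the slope test only because $p^{2}$ (or a higher power) divides some small term $\alpha+i$ with $i\le k$; the Newton-polygon bookkeeping of the first and last steps is routine given Lemmas \ref{Dumas} and \ref{fil}.
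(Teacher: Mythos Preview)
The paper does not prove this lemma: it is quoted from Laishram and Shorey \cite{LaSh11}. Your outline is nonetheless the correct one and matches the method of that paper, whose key Newton--polygon criterion (their Lemma~1.1) is restated here, in the $\psi_n^{(\alpha)}$ setting, as Lemma~\ref{Glemma}. The one substantive difference is that the working sufficient condition in \cite{LaSh11} is considerably sharper than your ``$p>\alpha+k$ and $p^{2}>\alpha+n$'': it requires only $p\ge k+2$, $p\nmid a_0a_n\prod_{i=1}^{k}(\alpha+i)$, together with either \eqref{2u0} or \eqref{2k}. With your cruder bound the initial candidate list would be far longer --- at $(k,\alpha)=(2,40)$, say, you would have to enumerate \emph{all} pairs of consecutive $42$-smooth integers rather than the handful in the table --- so while your two-pass scheme (coarse smoothness reduction, then test each survivor against the exact slope condition $\max_{j}\nu_p(\Delta_j)/j<1/k$) is logically sound, it is the sharper criterion that keeps the computation in \cite{LaSh11} tractable and yields the stated exception list directly.
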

  The above result on $ f_n^{(\alpha)}(x)$ has a large number of exceptions especially when $k=2.$ Moreover it gives no information on linear factors. Hence in this paper, we consider \ps \ which is a special case of $f_n^{(\alpha)}(x)$, but more general than $L_n^{(\alpha)}(x)$ and we get complete irreducibility results for $ \psi_n^{(\alpha)}(x)$. Analogously we consider the  polynomial $G_n^{(\alpha)}(x)$ which is more general than \ps.
    For integers $a_0,a_1, \ldots,a_n$ and $\alpha$ given by \eqref{alpha1}, let 
      \begin{align*}
      G_n^{(\alpha)}(x)=G_n^{(\alpha)}(x;a_0,a_1, \ldots,a_n)=&\displaystyle\sum\limits_{j=0}^{n}a_{j}(n+\alpha)(n-1+\alpha)\cdots(j+1+\alpha)d^{n-j}x^j\end{align*}
      We observe that 
      \begin{align*}
     (n+\alpha)!f_n^{(\alpha)}(x)=\G \ {\rm  when} \ \alpha \ {\rm is \ an \ integer}.
      \end{align*} Let $\alpha$ be a rational with denominator $2.$ Then by \eqref{alpha1}, $\alpha=u+\frac{1}{2}$ and
      \begin{align*}
      \G = \displaystyle\sum\limits_{j=0}^{n}a_{j}x^j (\displaystyle\prod_{i=j+1}^{n}(1+2(u+i))).
      \end{align*}
  Schur \cite{Sch1}, \cite{Sch31} proved that  $G_n^{(\alpha)}(x^2)$ with $|a_0|=|a_n|=1$ is irreducible when $u \in \{-1,0\}$ unless $u=0$ and $2n+1$ is a power of $3$ where it may have a linear or quadratic factor. Let 
        $ A= \{\pm 2^t: t\geq 0,t \in \mathbb{Z}\}$ and 
      $S=\{(1,121),(8,59),(8,114),(9,4),(9,113),\\(9,163),(9,554),(15,23),(15,107),(16,106),(20,102),(21,101),(26,155),(26,287),(30,92),\\(36,86),(43,1158),(44,716)\}$. Laishram, Nair and Shorey \cite{LaNaSh15} proved the following irreducibility results on $G_n^{(\alpha)}(x^2).$  We observe that in \cite{LaNaSh15} the polynomials $G_n^{(\alpha)}(x)$ are denoted by $G_{\alpha}(x).$
    \begin{lemma}\label{ psi greater than 2 with half}
    Let $1\leq u \leq 45$ and $\alpha=u+\frac{1}{2}$. Let $a_0,a_n \in A.$ Then $G_n^{(\alpha)}(x^2)$ has no factor of degree $\geq 3$ except where 
       $(u, n)\in \{(1,12),(6,7),(9, 113),(10,3),(21, 101)\}$ or $(u,n) \in S$ or $(u,n)=(44,79)$ where it may have a factor of degree $3$ or $4$ or $6,$ respectively.
    \end{lemma}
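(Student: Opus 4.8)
\textbf{Proof strategy for Lemma \ref{ psi greater than 2 with half}.} The plan is to reduce the existence of a factor of degree $k \geq 3$ of $G_n^{(\alpha)}(x^2)$ to an arithmetic statement about the product $\prod_{i=j+1}^{n}(1+2(u+i)) = \prod_{i=j+1}^{n}(2(u+i)+1)$ of consecutive odd integers, and then rule out almost all pairs $(u,n)$ by a prime-choosing argument via Lemma \ref{fil} (hence ultimately Dumas, Lemma \ref{Dumas}). First I would set $m = 2n$ (the degree of $G_n^{(\alpha)}(x^2)$) and note that $G_n^{(\alpha)}(x^2) = \sum_{j=0}^{n} a_j b_j x^{2j}$ where $b_j = \prod_{i=j+1}^{n}(2(u+i)+1)$, so that only even-degree monomials occur; a factor of degree $k$ of $G_n^{(\alpha)}(x^2)$ forces, after the substitution $x \mapsto x^2$ is undone or a Capelli-type argument is used, constraints that are cleanest to analyze on the polynomial $\sum_{j} a_j b_j x^j$ of degree $n$. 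The key observation is that $b_0 = \prod_{i=1}^{n}(2(u+i)+1)$ is a product of $n$ consecutive terms of the arithmetic progression of odd numbers starting at $2(u+1)+1 = 2u+3$, and $b_n = 1$, so $|a_0 b_0 a_n b_n| $ is controlled and for a suitable prime $p$ we have $p \nmid b_n$ while $p$ divides many of the low-index $b_j$.

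The main engine is the following: for a prime $p$ in a suitable range relative to $n$ and $u$, one shows that $\nu_p(b_j)$ as a function of $j$ is ``mostly large for small $j$ and zero for $j$ near $n$'', and that the rightmost edge of $NP_p$ of the auxiliary polynomial $\sum_j b_j x^j$ has slope $< 1/k$; then Lemma \ref{fil} with $l$ chosen appropriately (typically $l = 0$ or $l=2$ to kill degrees in $[1,k]$ or $[3,k]$) shows no factor of degree in the target range can exist, for \emph{any} choice of $a_0,\dots,a_n$ with $|a_0 a_n|=1$. The crux is a careful choice of $p$: following the Schur/Filaseta paradigm one looks for a prime $p$ with $n/k < p \leq $ (something like) $n$ or a prime dividing exactly one factor $2(u+i)+1$ in a controlled way, using that among $n$ consecutive odd integers there is a prime $p > n$ dividing exactly one of them unless $n$ is small, and for the genuinely small or exceptional $n$ one invokes results on the greatest prime factor of products of consecutive integers in an arithmetic progression (Sylvester–Erd\H{o}s type bounds, specialized to the AP $2x+1$). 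This is where the finite exceptional list --- the five pairs $(1,12),(6,7),(9,113),(10,3),(21,101)$, the set $S$ of eighteen pairs, and $(44,79)$ --- emerges: these are exactly the $(u,n)$ with $u \leq 45$ for which no prime $p$ with the required divisibility and slope properties can be found, and they are enumerated by a finite computation once the search range for $p$ is made explicit.

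The steps, in order, would be: (1) rewrite $G_n^{(\alpha)}(x^2)$ and record $b_j = \prod_{i=j+1}^{n}(2(u+i)+1)$, noting $b_j \mid b_{j-1}$ and $b_n = 1$; (2) fix $k$ with $3 \leq k \leq n/2$ (since a degree-$k$ factor of a degree-$2n$ polynomial forces $k \leq n$, and we may take $k \leq n$, but to apply Lemma \ref{fil} one works with the degree-$n$ polynomial and $k \leq n/2$, handling $k \in (n/2, n]$ by the complementary-factor remark in the introduction); (3) for each candidate $p$ in the explicit range, compute $\nu_p(b_j)$ and the vertices/edges of $NP_p(\sum b_j x^j)$, verify the hypotheses $p \nmid b_n$, $p \mid b_j$ for $0 \le j \le n - l - 1$, and rightmost-edge-slope $< 1/k$; (4) conclude via Lemma \ref{fil} that no factor of degree in $[l+1,k]$ exists, sweeping $k$ from $3$ up to $\lfloor n/2 \rfloor$; (5) for the residual pairs $(u,n)$ where step (3) fails to produce a usable prime, record them --- verifying by direct (computer-assisted) inspection of the primes dividing $\{2(u+i)+1\}$ that these are precisely the pairs listed --- and note that for those the lemma only claims a \emph{possible} factor of degree $3$, $4$, or $6$. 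The main obstacle is step (3)--(5): proving that the prime search succeeds for \emph{all} $u \leq 45$ and all $n$ outside the listed finite set requires effective lower bounds on the greatest prime factor of $\prod_{i=1}^{n}(2u + 2i + 1)$ that are uniform in $n$ once $n$ is moderately large, together with a finite but nontrivial verification for small $n$; threading the dependence on both $u$ and $k$ through the slope condition $<1/k$ is the delicate bookkeeping that makes or breaks the argument.
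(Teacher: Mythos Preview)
The paper does not give a proof of this lemma at all: it is quoted verbatim as a result of Laishram, Nair and Shorey \cite{LaNaSh15} (where $G_n^{(\alpha)}(x)$ is written $G_{\alpha}(x)$), and is used here as a black box. So there is no ``paper's own proof'' to compare against; the correct move in this manuscript is simply to cite \cite{LaNaSh15}.

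That said, your sketch is broadly in the right spirit --- the argument in \cite{LaNaSh15} does proceed via Newton polygons and a prime search among the odd factors $1+2(u+i)$, together with effective bounds on the greatest prime factor of such products and a finite computer check --- but one point in your outline is a genuine gap. You propose to pass from $G_n^{(\alpha)}(x^2)$ (degree $2n$) to the degree-$n$ polynomial $\sum_j a_j b_j x^j$ and apply Lemma \ref{fil} there. Ruling out a factor of degree $\ell$ of the degree-$n$ polynomial does \emph{not} by itself rule out a factor of degree $k$ of $G_n^{(\alpha)}(x^2)$: an irreducible factor of $f(x^2)$ need not come from a factor of $f(x)$ (for odd $k$ this is especially delicate), and your ``Capelli-type argument'' is doing real work that you have not supplied. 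The clean route, and the one taken in \cite{LaNaSh15}, is to apply the Newton-polygon machinery directly to the degree-$2n$ polynomial $G_n^{(\alpha)}(x^2)$ (whose odd-index coefficients vanish), so that Lemma \ref{fil} with $m=2n$ speaks directly about factors of $G_n^{(\alpha)}(x^2)$; the slope threshold then becomes $<1/k$ for the degree-$2n$ polygon, and the bookkeeping of which $(u,n)$ survive is exactly what produces the three exceptional lists in the statement.
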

     The proof for the irreducibility of $L_n^{(\alpha)}(x^2)$ given in Section 7 of \cite{LaNaSh15} based on Newton polygons is also valid for $\psi_n^{(\alpha)}(x^2)$  when $\alpha=u+\frac{1}{2}$ except for the pairs $(u,n) \in T_0$ where it may have a linear or quadratic factor where
     \begin{align*}
T_0=&\{(2,2),(2,8),(2,2^9),(6,2^4),(9,4),(9,2^6),(10,3),(10,12),(10,24),(10,192),(11,2),\\ &(16,2^3),(21,2^4),(24,2^4),(30,2^6),(35,2), 
(35,2^5),(35,2^{9}),(36,2^6), (37,12),(37,36),\\ &(37,144),(38,2), (44,2^{12})\}.
\end{align*} 
For $(u,n) \in T_0,$ we have computed $L_n^{(\alpha)}(x^2)$ in \cite{LaNaSh15} to find that it is irreducible except at $(u,n)=(10,3).$ 
But in this case of $\psi_n^{(\alpha)}(x^2)$, since $a_j$'s are arbitrary, we cannot exclude these pairs as we did it for 
$L_n^{(\alpha)}(x^2).$ Thus we have    
    
    \begin{lemma}\label{psi linear half}
    Let $1\leq u \leq 45$ and $\alpha=u+\frac{1}{2}$. Then $\psi_n^{(\alpha)}(x^2)$ with $|a_0a_n|=1$ has no factor of degree in $\{1,2\}$ except when $(u,n) \in T_0.$
    \end{lemma}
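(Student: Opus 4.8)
The plan is to deduce the lemma from Filaseta's criterion, Lemma \ref{fil}, applied to $\psi_n^{(\alpha)}(x^2)$; this is precisely the device that upgrades the Newton-polygon proof of Section 7 of \cite{LaNaSh15} for $L_n^{(\alpha)}(x^2)$ to a statement valid for arbitrary integer coefficients $a_j$ with $|a_0a_n|=1$. We may assume $n\ge2$. With $\alpha=u+\tfrac12$, \eqref{ps} (taking $a=1$, $d=2$) gives
\[
\psi_n^{(\alpha)}(x^2)=\sum_{j=0}^{n}a_j\,c_j\,x^{2j},\qquad
c_j:=\binom nj\prod_{i=j+1}^{n}\bigl(2(u+i)+1\bigr)\in\Z,\qquad c_n=1 .
\]
A factor of $\psi_n^{(\alpha)}(x^2)$ of degree in $\{1,2\}$ is a factor of degree in $[l+1,k]$ with $l=0$, $k=2$. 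Apply Lemma \ref{fil} with the template $g(x)=\sum_j c_j x^{2j}$: its hypotheses concern only $g$ and the chosen prime $p$, namely $p\nmid c_n$ (automatic, since $c_n=1$), $p\mid c_j$ for $0\le j\le n-1$, and the right-most edge of $NP_p(g)$ of slope $<\tfrac12$; moreover $|a_0a_n|=1$ makes the side condition $p\nmid a_0a_n$ automatic. Hence, once such a prime $p$ is exhibited, Lemma \ref{fil} shows that $\psi_n^{(\alpha)}(x^2)$ has no factor of degree in $\{1,2\}$ for \emph{every} admissible $(a_j)$ at once, and the task reduces to producing such a $p$ for each $(u,n)$ with $1\le u\le45$, $n\ge2$ and $(u,n)\notin T_0$. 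Since $g$ and a suitable integer multiple of $L_n^{(\alpha)}(x^2)$ have coefficients differing only by the factors $\pm2^j$, their Newton polygons at odd primes coincide, so this is exactly the search carried out in \cite{LaNaSh15}.

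The model case is $q:=2(u+n)+1$ prime. Then the factor $i=n$ shows $q\mid c_j$ for $0\le j\le n-1$, while $\nu_q(c_0)=1$: indeed $q\nmid\binom n0=1$, and of the $n$ consecutive odd numbers $2u+3,\dots,2u+2n+1$ each is at most $q$ and exactly one equals $q$. Hence every intermediate point $(2t,\nu_q(c_{n-t}))$ with $1\le t\le n-1$ satisfies $\nu_q(c_{n-t})\ge1>t/n$, so $NP_q(g)$ is the single segment from $(0,0)$ to $(2n,1)$, of slope $1/(2n)<\tfrac12$, and Lemma \ref{fil} applies. When $2(u+n)+1$ is composite one follows Section 7 of \cite{LaNaSh15}: the prime $p$ is sought among the prime divisors of $2(u+n)+1$, or, when these do not suffice, among primes in suitable short ranges guaranteed by Sylvester--Erd\H{o}s type lower bounds for the greatest prime factor of the product $\prod_{i=1}^n(2(u+i)+1)$ of $n$ consecutive odd integers, the required shape of $NP_p(g)$ then being forced after combining with the standard estimates for $\nu_p\binom nj$ (the $2$-adic case being handled by the analogous estimate). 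Every inequality in that argument concerns only the $c_j$, so each application of Lemma \ref{fil} excludes a factor of degree in $\{1,2\}$ of $\psi_n^{(\alpha)}(x^2)$ for arbitrary $a_j$ with $|a_0a_n|=1$; the finitely many $(u,n)$, $1\le u\le45$, for which the method fails to exclude such a factor are exactly the pairs in $T_0$.

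The main work, and the only genuine obstacle, is the bookkeeping of this last step: one must check that every step of the Section 7 argument of \cite{LaNaSh15} is truly of Lemma \ref{fil} type — using the $c_j$ and the chosen prime but never the signs $(-1)^j$ — and then confirm, by a finite computation over $1\le u\le45$ and the relevant $n$, that the residual set is precisely $T_0$. This is also the point where the present situation departs from that of $L_n^{(\alpha)}(x^2)$: in \cite{LaNaSh15} the pairs in $T_0$ are ultimately settled by evaluating $L_n^{(\alpha)}(x^2)$ directly and finding it irreducible except at $(u,n)=(10,3)$, whereas here the $a_j$ are free and no single evaluation settles them, so these pairs must remain as exceptions; indeed, by Theorem \ref{Thm2}, for the pairs in $\Omega_1$ other than $(44,2^{12})$ a quadratic factor genuinely occurs for a suitable choice of the $a_j$.
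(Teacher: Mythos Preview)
Your proposal is correct and follows essentially the same approach as the paper: the paper's justification for this lemma (given in the paragraph immediately preceding it) is simply that the Newton-polygon proof in Section~7 of \cite{LaNaSh15} for $L_n^{(\alpha)}(x^2)$ carries over verbatim to $\psi_n^{(\alpha)}(x^2)$, since those arguments depend only on the template coefficients $c_j$ and not on the particular signs $(-1)^j$, leaving exactly the pairs in $T_0$ unresolved. You have made the mechanism explicit by identifying Lemma~\ref{fil} as the device that transfers each Newton-polygon step from $g$ to $\psi_n^{(\alpha)}(x^2)$ for arbitrary $a_j$ with $|a_0a_n|=1$, and you correctly note that the direct evaluations used in \cite{LaNaSh15} to dispose of the $T_0$ pairs do not transfer; this is precisely the paper's reasoning, stated more fully.
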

 \begin{lemma}\label{weger}
 The diophantine equation 
 \begin{align*}
 x+y=z
 \end{align*}
 in $x,y,z \in S=\{2^{x_{1}}\cdots 13^{x_{6}};x_i \in \mathbb{Z},x_i \geq 0\}$ with $\gcd (x,y)=1$ and $x \leq y$ has exactly $545$ solutions. Out of them $514$ satisfy 
 {\text ord}$_{2}(xyz) \leq 12,$ ord$_{3}(xyz) \leq 7$, ord$_{5}(xyz) \leq 5$, ord$_{7}(xyz) \leq 4,$ ord$_{11}(xyz) \leq 3,$ ord$_{13}(xyz) \leq 3.$ The remaining $31$ solutions are given in \cite[Table IX]{Weg}.
 \end{lemma}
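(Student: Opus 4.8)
The plan is to deduce Lemma~\ref{weger} from the complete resolution of the $S$-unit equation $x+y=z$ with $S=\{2,3,5,7,11,13\}$ carried out in \cite{Weg}. First I would observe that the hypotheses place the equation exactly in that framework: since $x,y,z$ all lie in $S=\{2^{x_1}\cdots 13^{x_6}: x_i\in\mathbb{Z},\,x_i\ge 0\}$ and $\gcd(x,y)=1$, the triple $(x,y,z)$ is a primitive solution of the $S$-unit equation in the multiplicative semigroup generated by the six primes $2,3,5,7,11,13$, and the normalisation $x\le y$ removes the trivial symmetry $x\leftrightarrow y$. Hence it suffices to enumerate all such primitive solutions and then inspect them.

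For the enumeration I would follow de Weger's method. Writing $x=\prod p_i^{a_i}$, $y=\prod p_i^{b_i}$, $z=\prod p_i^{c_i}$, coprimality forces $\min(a_i,b_i)=0$ for every $i$, so for each of the finitely many ``support patterns'' (recording which of $x,y,z$ is divisible by each $p_i$) one is reduced to an exponential Diophantine equation in at most six exponent variables. For each pattern, a large exponent makes some $p$-adic or archimedean linear form in logarithms of the $p_i$ very small; Baker's theorem (in its archimedean form, and in the $p$-adic form of Yu / van der Poorten, one prime at a time) then bounds all exponents by an explicit but astronomically large $N_0$. One replaces $N_0$ by a small bound through repeated LLL lattice basis reduction applied to the lattices attached to these linear forms, and finally runs an exhaustive search below the reduced bound. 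Carrying this out over all patterns, as in \cite{Weg}, produces exactly $545$ solutions with $x\le y$.

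With the explicit list of $545$ triples in hand, the remainder is a finite verification: for each triple compute the six numbers $\mathrm{ord}_p(xyz)$ for $p\in\{2,3,5,7,11,13\}$ and test them against the bounds $\mathrm{ord}_2\le 12$, $\mathrm{ord}_3\le 7$, $\mathrm{ord}_5\le 5$, $\mathrm{ord}_7\le 4$, $\mathrm{ord}_{11}\le 3$, $\mathrm{ord}_{13}\le 3$. Exactly $514$ triples pass all six tests; the remaining $31$, which necessarily violate at least one bound, are precisely those collected in \cite[Table IX]{Weg}. This completes the proof.

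The genuine difficulty lies entirely inside de Weger's computation and not in the present lemma: the chasm between the Baker-type bound $N_0$ and a searchable range is closed only by iterated lattice reduction, which must moreover be organised separately over the many support patterns and rerun whenever a reduction step does not yet yield a small enough bound. Since that work is already available in the literature, for us Lemma~\ref{weger} amounts to quoting \cite{Weg} for the list of $545$ solutions and performing the straightforward post-hoc check of the six valuation inequalities that isolates the $31$ entries of Table~IX.
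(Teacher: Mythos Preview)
Your proposal is correct and matches the paper's treatment: the paper simply states ``This is due to de Weger \cite{Weg}'' and moves on, treating the lemma as a quoted result rather than something to be re-proved. Your write-up is just a more expansive version of the same citation, sketching what de Weger's computation involves; there is no alternative argument to compare.
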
 
  This is due to de Weger \cite{Weg}. Further we need the following result from 
  \cite[Lemma 4.1]{Fil Influ} which is a direct application of Lemma \ref{Dumas} for determining $a_0,a_1,\ldots,a_n$ such that \ps \ has a linear factor when $(n,\alpha) \in \Omega.$
  \begin{lemma}\label{monic}
  Let $w(x)$ be a monic polynomial in $\mathbb{Z}[x]$ divisible by $x-b$ with $b \in \mathbb{Z}.$ Let $p$ be a prime and $e$ be a non- negative integer for which $p^e\parallel b.$ Then $NP_p(w(x))$ with respect to $p$ has an edge that includes a translate of the line segment joining $(0,0)$ to $(1,e).$ Also, if the right most edge has slope $<1,$ then necessarily $e=0.$
  \end{lemma}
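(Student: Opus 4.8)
The plan is to obtain $NP_p(w)$ from the factorization $w(x)=(x-b)h(x)$ via Dumas's theorem (Lemma \ref{Dumas}). Since $p^e\parallel b$ we have $b\neq 0$, so $x-b$ is coprime to $x$; hence if $x^s\parallel w(x)$ with $s>0$ we may replace $w$ by $w(x)/x^s$, which is again a monic polynomial divisible by $x-b$ whose Newton polygon agrees with $NP_p(w)$ on all edges of finite slope --- the two differ only in a terminal vertical edge of $NP_p(w)$, so when $w(0)=0$ the hypothesis of the second assertion (a right-most edge of slope $<1$) is in any case vacuous. Thus we may assume $w(0)\neq 0$. Then $h(x)=w(x)/(x-b)$ is monic in $\mathbb{Z}[x]$ with $h(0)=w(0)/(-b)\neq 0$, and Lemma \ref{Dumas} applies to $w=g\cdot h$ with $g(x)=x-b$.

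First I would record $NP_p(g)$ for the linear polynomial $g(x)=x-b$: its point set is $\{(0,\nu(1)),(1,\nu(-b))\}=\{(0,0),(1,e)\}$, so $NP_p(x-b)$ is the single edge from $(0,0)$ to $(1,e)$, of slope $e$. Next, since $w$ is monic the exact power of $p$ dividing its leading coefficient is $p^0$, so in Dumas's construction $k=0$: the polygonal path for $NP_p(w)$ begins at $(0,0)$ and is obtained by concatenating, in order of increasing slope, exactly one translate of each edge of $NP_p(x-b)$ and of $NP_p(h)$. In particular the single edge of $NP_p(x-b)$ contributes a translate --- a segment from some $(c,d)$ to $(c+1,d+e)$ --- lying along one of the edges of $NP_p(w)$; should an edge of $NP_p(h)$ happen to have slope $e$ as well, this translate is absorbed into the longer collinear edge that results, but it remains contained in an edge of $NP_p(w)$. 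This proves the first assertion.

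For the second assertion, let $E$ be the edge of $NP_p(w)$ containing such a translate. Along that translate, $E$ has horizontal run $1$ and vertical rise $e$; since the slope is constant along an edge and is unchanged by translation, $E$ has slope exactly $e$. The slopes of the edges of a Newton polygon increase from left to right, so $E$ has slope at most that of the right-most edge of $NP_p(w)$; if the latter is $<1$, then $e<1$, and since $e$ is a non-negative integer we conclude $e=0$.

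I do not expect a genuine obstacle here: the statement reduces to Lemma \ref{Dumas} together with the trivial shape of the Newton polygon of $x-b$. The only points needing a word of care are the normalisation to $w(0)\neq 0$ so that Dumas is literally applicable, and the observation that a translate contained in a merged equal-slope edge is still a translate contained in an edge of $NP_p(w)$ --- which is exactly what the statement asserts.
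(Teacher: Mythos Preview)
Your proof is correct and matches the paper's approach: the paper does not prove this lemma but cites it from \cite[Lemma 4.1]{Fil Influ} and remarks that it is ``a direct application of Lemma \ref{Dumas}'', which is precisely what you have written out in detail. The only minor point is that the paper's definition of $NP_p$ presupposes $a_0a_m\neq 0$, so your reduction to $w(0)\neq 0$ is really a tacit hypothesis rather than a separate case; otherwise your argument is exactly the intended one.
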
 
  \begin{lemma}\label{order}
  Let $p$ be a prime. For any integer $l\geq 1$, write $l$ in base $p$ as $l=l_tp^t+l_{t-1}p^{t-1}+\dots+l_1p+l_0$ where $0\leq l_i\leq p-1$ for $0\leq i \leq t$ and $l_t >0$. Then
  \begin{align*}
  \nu_p(l!)=\frac{l-\sigma_p(l)}{p-1}
  \end{align*}
  where $\sigma_p(l)=l_t+l_{t-1}+\dots+l_1+l_0$.
  \end{lemma}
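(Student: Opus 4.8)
The statement is Legendre's classical formula for $\nu_p(l!)$, so the proof is a routine computation; I will describe it as a plan. The strategy is to first derive Legendre's identity $\nu_p(l!)=\sum_{i\ge 1}\lfloor l/p^i\rfloor$ by double counting, and then to collapse this sum into closed form using the base-$p$ digits $l_0,\dots,l_t$ of $l$.

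For the first step, I would write $\nu_p(l!)=\sum_{m=1}^{l}\nu_p(m)$ and, for each $m$, express $\nu_p(m)=\sum_{i\ge 1}[\,p^i\mid m\,]$ as a finite sum of indicator terms; interchanging the two finite sums yields
$$\nu_p(l!)=\sum_{i\ge 1}\#\{\,1\le m\le l:\ p^i\mid m\,\}=\sum_{i\ge 1}\left\lfloor\frac{l}{p^i}\right\rfloor=\sum_{i=1}^{t}\left\lfloor\frac{l}{p^i}\right\rfloor,$$
the last equality because $\lfloor l/p^i\rfloor=0$ whenever $p^i>l$, i.e.\ for $i>t$. For the second step I would substitute $l=\sum_{j=0}^{t}l_jp^j$ and use that $\sum_{j=0}^{i-1}l_jp^j\le p^i-1<p^i$, so that $\lfloor l/p^i\rfloor=\sum_{j=i}^{t}l_jp^{j-i}$ for $1\le i\le t$. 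Interchanging the order of summation and summing the geometric series,
$$\nu_p(l!)=\sum_{i=1}^{t}\sum_{j=i}^{t}l_jp^{j-i}=\sum_{j=1}^{t}l_j\sum_{i=1}^{j}p^{j-i}=\sum_{j=1}^{t}l_j\,\frac{p^j-1}{p-1}.$$

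It then remains only to simplify: multiplying by $p-1$,
$$(p-1)\,\nu_p(l!)=\sum_{j=1}^{t}l_j(p^j-1)=\sum_{j=1}^{t}l_jp^j-\sum_{j=1}^{t}l_j=(l-l_0)-(\sigma_p(l)-l_0)=l-\sigma_p(l),$$
which is the assertion. There is no real obstacle here: every interchange of sums is over a finite index set, so no convergence question arises, and the only identity used is the elementary $\sum_{i=1}^{j}p^{j-i}=(p^j-1)/(p-1)$. Alternatively one can avoid Legendre's formula altogether and argue by induction on $l$: writing $l=pq+l_0$ with $0\le l_0<p$, one has $\nu_p(l!)=q+\nu_p(q!)$ (since exactly $q$ of $1,\dots,l$ are divisible by $p$, and stripping one factor of $p$ from each turns their product into $p^q\,q!$) together with $\sigma_p(l)=l_0+\sigma_p(q)$, which reduces the claim for $l$ to the claim for $q<l$, the base case $0\le l<p$ being trivial. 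This is a standard fact, recorded here only for use in the Newton-polygon computations, so no novelty is claimed.
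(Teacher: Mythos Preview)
Your proof is correct and entirely standard; the paper itself does not prove this lemma at all but simply attributes it to Legendre and refers the reader to Hasse's \emph{Number Theory} for a proof. So there is nothing to compare: you have supplied a complete argument where the paper gives only a citation, and either of your two approaches (the direct computation via $\sum_{i\ge 1}\lfloor l/p^i\rfloor$ or the induction on $l$ through $l=pq+l_0$) would serve perfectly well.
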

  This is due to Legendre. For a proof, see \cite[Ch.17, p 263]{Hasse}.
  \begin{lemma}\label{prime cong}
  Let $r \in \{1,3\}.$ The interval $(x,1.048x]$ contain  primes congruent to $r$  modulo $4$ when 
  $x \geq 887.$ 
  \end{lemma}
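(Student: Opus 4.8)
The plan is to deduce the statement from lower bounds on the restricted Chebyshev function $\theta(y;4,r)=\sum_{p\le y,\,p\equiv r\ (4)}\log p$: it suffices to show, for each $r\in\{1,3\}$ and every real $x\ge 887$, that $\theta(1.048\,x;4,r)-\theta(x;4,r)>0$, since this forces a prime $p\equiv r\pmod 4$ with $x<p\le 1.048\,x$. I would establish this by splitting the range of $x$ into a tail $x\ge X_0$, treated by an effective prime number theorem for arithmetic progressions, and a head $887\le x\le X_0$, treated by an explicit finite computation, where $X_0$ is whatever threshold makes the effective estimate sharp enough.

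For the tail I would invoke an effective prime number theorem modulo $4$ in the shape $\bigl|\theta(y;4,r)-\tfrac{y}{2}\bigr|\le\tfrac{\varepsilon}{2}\,y$ for $y\ge X_0$, with $\varepsilon$ a small explicit constant. Such bounds are particularly clean for the modulus $4$, which has a single non-principal character $\chi$ (with $\chi(1)=1$, $\chi(3)=-1$): they follow from an explicit zero-free region for $L(s,\chi)$ together with explicit estimates for $\theta(y)$, after the standard negligible comparisons between $\psi$, $\theta$ and $\sum_{n\le y}\Lambda(n)\chi(n)$ (see e.g.\ the explicit tables of Ramar\'e and Rumely, or of Bennett, Martin, O'Bryant and Rechnitzer). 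Then for $x\ge X_0$,
\[
\theta(1.048\,x;4,r)-\theta(x;4,r)\ \ge\ \tfrac12\bigl(1.048\,x(1-\varepsilon)-x(1+\varepsilon)\bigr)\ =\ x\,(0.024-1.024\,\varepsilon),
\]
which is positive as soon as $\varepsilon<0.0234$; the cited estimates furnish such an $\varepsilon$ at a concrete and, for this modulus, quite moderate value of $X_0$.

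For the head $887\le x\le X_0$, I would, for each $r\in\{1,3\}$, exhibit an explicit increasing chain of primes $q_0<q_1<\cdots<q_N$, all $\equiv r\pmod 4$, with $887<q_0\le 1.048\cdot 887$, with $q_N>X_0$, and with $q_{i+1}\le 1.048\,q_i$ for $0\le i<N$. For $r=3$ one may begin with $q_0=907$; for $r=1$ one must begin with $q_0=929$, and it is exactly this fact --- $929$ being the least prime $\equiv 1\pmod 4$ exceeding $887$, with $929/887<1.048$ by the narrowest of margins --- that fixes the threshold at $887$ (already $(886,\,1.048\cdot 886\,]$ contains no prime $\equiv 1\pmod 4$). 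Given $x$ in the range, let $i$ be least with $q_i>x$: if $i=0$ then $q_0\le 1.048\cdot 887\le 1.048\,x$, and if $i\ge 1$ then $q_{i-1}\le x$ forces $q_i\le 1.048\,q_{i-1}\le 1.048\,x$; in either case $q_i\in(x,1.048\,x\,]$. Constructing these chains is routine but lengthy --- roughly $\log X_0/\log 1.048$ primes per class --- and is a finite verification.

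The delicate point is the calibration: the admissible relative error in the tail is only about $2.3\%$, so one must use sufficiently sharp explicit constants for the character sum attached to $\chi\bmod 4$ (equivalently, for $\theta(y;4,r)$) in order to keep $\varepsilon$ below this threshold at a value of $X_0$ for which the chain verification stays practical. One must also handle the lower endpoint with care, since there the class $r=1$ is the binding one and the ratio $929/887\approx 1.0474$ leaves essentially no slack, so the statement genuinely fails for slightly smaller $x$.
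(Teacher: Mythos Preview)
Your proposal is a correct and carefully calibrated proof sketch, but the paper does not prove this lemma at all: it simply records that the statement is the case $k=4$ of Theorem~1 in Cullinan--Hajir, \emph{Primes of prescribed congruence class in short intervals} (Integers \textbf{12} (2012), A56). So your approach differs from the paper's in that you are reconstructing a self-contained argument rather than invoking the literature.

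What you outline is in fact the standard route to results of this type, and is essentially how Cullinan and Hajir proceed (they rely on the explicit estimates of Ramar\'e--Rumely for $\theta(y;k,r)$ to handle large $x$, and on a direct verification for small $x$). Your identification of the binding constraint is exactly right: the threshold $887$ and the constant $1.048$ are forced by the gap between $881$ and $929$ among primes $\equiv 1\pmod 4$, since $929/887\approx 1.0474$ while $929>1.048\cdot 886$. The only caveat is that your write-up is a plan rather than a proof: to make it complete you would need to quote a specific table entry (e.g.\ from Ramar\'e--Rumely or Bennett--Martin--O'Bryant--Rechnitzer) giving an admissible pair $(\varepsilon,X_0)$ with $\varepsilon<0.0234$, and then actually carry out or cite the chain verification up to $X_0$. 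For the purposes of this paper, the citation suffices.
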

  This follows from \cite[Theorem 1]{CuHa} with $k=4.$
  
 \section{Lemmas for the proof of Theorem \ref{Thm1}} 
Let $\alpha$ be an integer throughout this section. We write
  \begin{align*}
  \Delta_j=\Delta(\alpha+1, j)=(\alpha+1)(\alpha+2)\cdots (\alpha+j).
  \end{align*} For the proof of Theorem \ref{Thm1}, we need the following result which is an analogous for $f_n^{(\alpha)}(x)$ as proved in \cite[Lemma 1.1]{LaSh11}.
 
 \begin{lemma}\label{Glemma}
  Let $\alpha>0, 1\le k\le \frac{n}{2}$ and $u_0=\frac{\alpha}{k}$. Assume that there is a prime $p\ge k+2$ with
 \begin{align}\label{cond}
 p|\prod^k_{i=1}(n-k+i)(\alpha+n-k+i), \ \ p\nmid a_0a_n\prod^k_{i=1}(\alpha+i).
 \end{align}
 Suppose
 \begin{align}\label{2u0}
 p\ge \min(2u_0, k+u_0)
 \end{align}
 or
 \begin{align}\label{2k}
 p>2k \ {\rm and} \ p^2-p\ge \alpha.
 \end{align}
 Then \ps \ has no factor of degree $k$.
\end{lemma}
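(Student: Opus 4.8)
The plan is to run a Newton-polygon argument with respect to the prime $p$, following the template of Lemma \ref{fil}, applied to a suitable normalization of $\psi_n^{(\alpha)}(x)$. Write $\psi_n^{(\alpha)}(x)=\sum_{j=0}^n a_j b_j x^j$ where $b_j=\binom{n}{j}(n+\alpha)(n-1+\alpha)\cdots(j+1+\alpha)d^{n-j}$ (here $d=1$ since $\alpha$ is an integer, so the factor $d^{n-j}$ disappears), i.e. $b_j=\binom{n}{j}\Delta(\alpha+j+1,n-j)$ in the notation of the section. Since $p\nmid a_0a_n$, by Lemma \ref{fil} it suffices to show that $p\nmid b_n$, that $p\mid b_j$ for $0\le j\le n-1$ (taking $l=0$, $k=k$, $m=n$, which is legitimate because $n\ge 2k>0$), and that the rightmost edge of $NP_p(b)$ — equivalently the rightmost edge relevant to degrees up to $k$ — has slope $<\tfrac1k$. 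The first point is immediate: $b_n=1$. For the divisibility $p\mid b_j$ for $j<n$: the hypothesis \eqref{cond} gives a prime $p\ge k+2$ dividing $\prod_{i=1}^k(n-k+i)(\alpha+n-k+i)$ but not dividing $a_0a_n\prod_{i=1}^k(\alpha+i)$. One then checks that such a $p$ divides every product $\binom{n}{j}\Delta(\alpha+j+1,n-j)$ for $0\le j\le n-1$; the point is that the $k$ consecutive integers $n-k+1,\dots,n$ and the $k$ consecutive integers $\alpha+n-k+1,\dots,\alpha+n$ together all appear inside $b_j$ for $j\le n-1$ (for $j$ small via the binomial part $\frac{n!}{j!(n-j)!}$, for $j$ larger via $\Delta(\alpha+j+1,n-j)=(\alpha+j+1)\cdots(\alpha+n)$), so at least one of the two $p$-divisible numbers sits in the product. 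This is the standard "$p$ divides all lower coefficients" step from \cite{LaSh11, Fil Influ}.

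The heart of the matter is the slope estimate: I must show that the rightmost edge of $NP_p(b)$ restricted to the interval $[0,k]$ of abscissae has slope strictly less than $1/k$, equivalently that $\nu_p(b_{n-i})<i/k$ fails to force a steep edge, and more precisely that $\nu_p(b_n)=0$ while $\nu_p(b_{n-i})$ stays small for $1\le i\le k$ — it is enough that $\nu_p(b_{n-k})<1$, i.e. $p\nmid b_{n-k}$, together with control of intermediate vertices, since the convex-hull edge over $[0,k]$ then has total rise $<1$ over run $k$. Now $b_{n-i}=\binom{n}{i}\,(\alpha+n-i+1)(\alpha+n-i+2)\cdots(\alpha+n)$ for $0\le i\le k$, so $\nu_p(b_{n-i})=\nu_p(\binom{n}{i})+\sum_{s=n-i+1}^{n}\nu_p(\alpha+s)$. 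Because $p\ge k+2>k\ge i$, Kummer's theorem (or Lemma \ref{order}) gives $\nu_p(\binom{n}{i})\le \lfloor\log_p n\rfloor$, and among the $i\le k<p$ consecutive integers $\alpha+n-i+1,\dots,\alpha+n$ at most one is divisible by $p$, contributing $\nu_p(\alpha+s)$ for that single $s$. The two hypotheses \eqref{2u0} and \eqref{2k} are precisely designed to cap these contributions: under \eqref{2k}, $p>2k$ and $p^2>p+\alpha>\alpha$ forces $p^2\nmid(\alpha+s)$ for every $s$ in range (since $\alpha+s\le \alpha+n$ and one shows $p^2$ cannot divide it, using $p^2-p\ge\alpha$ and that $\alpha+s$ lies in a short interval), so the $\Delta$-part contributes at most $1$ and the binomial part at most $1$ as well, and a closer bookkeeping (using $p>2k$, so $1/k>2/p$) pins the rightmost edge's slope below $1/k$; under \eqref{2u0} one argues with $u_0=\alpha/k$ that $p\ge\min(2u_0,k+u_0)$ again bounds $\sum\nu_p(\alpha+s)$ over the window of length $k$ by a quantity $<1$ except possibly at an interior vertex, which is then absorbed. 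I would carry out the two cases \eqref{2u0} and \eqref{2k} in parallel, each reducing to: "the rise of $NP_p(b)$ from abscissa $0$ to abscissa $k$ is $<1$."

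Assembling: with $p\nmid b_n$, $p\mid b_j$ for all $j<n$, and the rightmost edge over $[0,k]$ of slope $<1/k$, Lemma \ref{fil} (with $l=0$) yields that $f(x)=\sum a_jb_jx^j=\psi_n^{(\alpha)}(x)$ has no factor of degree in $[1,k]$, in particular none of degree exactly $k$; since a factor of degree $k$ would also produce one of degree between $1$ and $k$, we are done. I expect the main obstacle to be the careful verification of the slope bound in the borderline subcases of \eqref{2u0} — specifically ensuring that a lattice point arising from a solitary $p\,\|\,(\alpha+s)$ near the right end, combined with $\nu_p(\binom{n}{k})$, does not create an edge of slope $\ge 1/k$ over the last stretch; this is exactly where the numerical inequalities $p\ge\min(2u_0,k+u_0)$ and $p^2-p\ge\alpha$ must be used with care, mirroring \cite[Lemma 1.1]{LaSh11} but now keeping track of the binomial factor $\binom{n}{j}$ that is absent in the $f_n^{(\alpha)}$ setting. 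The $d^{n-j}$ factors cause no trouble since $d=1$ here, but I would remark on that explicitly to reassure the reader that the reduction from $\psi_n^{(\alpha)}$ to the Schur-type polynomial is exact in the integer-$\alpha$ case.
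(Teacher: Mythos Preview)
Your overall strategy---apply Lemma~\ref{fil} to $g(x)=g_n^{(\alpha)}(x)$ with respect to the prime $p$ and bound the slope of the rightmost edge of $NP_p(g)$ by $1/k$---is the intended one (the paper simply cites \cite[Lemma~1.1]{LaSh11}, whose proof carries over verbatim). But the execution has two genuine gaps.

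First, you take $l=0$ in Lemma~\ref{fil}, which requires $p\mid b_j$ for every $0\le j\le n-1$. This fails in general: if $k=2$ and \eqref{cond} holds via $p\mid(n-1)$ alone (with $p\nmid n(\alpha+n)$), then $b_{n-1}=n(\alpha+n)$ is not divisible by $p$. The correct choice is $l=k-1$, so that one only needs $p\mid b_j$ for $0\le j\le n-k$; this does follow from \eqref{cond} since $p\ge k+2$ (compare the proof of Lemma~\ref{Glemma1}).

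Second, and more seriously, your slope analysis is at the wrong end of the Newton polygon. In the paper's convention the point at abscissa $i$ is $(i,\nu_p(b_{n-i}))$, so abscissae in $[0,k]$ lie on the \emph{leftmost} part of $NP_p(g)$. The rightmost edge terminates at $(n,\nu_p(b_0))$, and its slope equals
\[
\max_{1\le j\le n}\frac{\nu_p(b_0)-\nu_p(b_j)}{j}=\max_{1\le j\le n}\frac{\nu_p(\Delta_j)-\nu_p\!\left(\tbinom{n}{j}\right)}{j}.
\]
Thus one must bound $\nu_p(\Delta_j)/j$ for \emph{all} $1\le j\le n$, and this is where \eqref{2u0} and \eqref{2k} are actually used. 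Your claim that ``it is enough that $p\nmid b_{n-k}$'' is the opposite of what holds: with the correct $l=k-1$ we have just seen $p\mid b_{n-k}$. Finally, your worry about ``keeping track of the binomial factor $\binom{n}{j}$'' is misplaced: since $\nu_p(\binom{n}{j})\ge 0$, the slope for $\psi_n^{(\alpha)}$ is at most that for $f_n^{(\alpha)}$, so the inequality $\nu_p(\Delta_j)/j<1/k$ proved in \cite{LaSh11} suffices unchanged---this is why the paper says the proof is ``exactly the same.''
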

The proof of Lemma \ref{Glemma} is exactly same as the proof of \cite[Lemma 1.1]{LaSh11} for $f_n^{(\alpha)}(x)$. Further we prove the following result analogus to Lemma \ref{Glemma} with $k \in \{1,2\}.$
\begin{lemma}\label{Glemma1}
  Let $k\in \{1, 2\}$ and $p\ge 2k+1$ be such that $p|\prod^k_{i=1}(n-k+i)$, $0 < \alpha \leq 50$ and
 \begin{align*}
 \nu_p((\alpha+1)\cdots (\alpha+j))\le \nu_p(n(n-1)\cdots (n-j+1)) \ {\rm for} \ 1\le j\le k.
 \end{align*}
 Then \ps \ with $|a_0a_n|=1$ has no factor of degree $k$ except when $k=1, p=3, \alpha \in \{24,25\}$ and
 $\nu_3(n)=1$.
 \end{lemma}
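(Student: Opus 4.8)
The plan is to use the Newton polygon machinery (Lemmas \ref{Dumas} and \ref{fil}) at the prime $p$, exploiting that $p$ divides a factor $\prod_{i=1}^k(n-k+i)$ appearing in the coefficients of $\psi_n^{(\alpha)}(x)$ near the top-degree terms, while $p\nmid a_0a_n$. Recall from \eqref{ps} that the coefficient of $x^j$ in $\psi_n^{(\alpha)}(x)$ is $a_j\binom{n}{j}(n+\alpha)\cdots(j+1+\alpha)$. Writing $b_j=\binom{n}{j}(n+\alpha)\cdots(j+1+\alpha)$, the idea is that $\psi_n^{(\alpha)}(x)=\sum a_jb_jx^j$ fits the template of Lemma \ref{fil} with $m=n$, $l=0$, and this choice of $k\in\{1,2\}$: we must check that $p\nmid b_n$, that $p\mid b_j$ for $0\le j\le n-l-1=n-1$, and that the rightmost edge of $NP_p(\sum b_jx^j)$ has slope $<\tfrac1k$. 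The hypothesis $p\mid\prod_{i=1}^k(n-k+i)$ is exactly what forces $p$ into the binomial factors $\binom{n}{j}$ for the relevant range of $j$.

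First I would verify $p\nmid b_n$: here $b_n=1$ (the product over $j+1+\alpha,\dots$ is empty and $\binom{n}{n}=1$), so this is automatic. Next, for $1\le j\le n-1$ I would show $p\mid b_j$. Since $p\mid(n-k+i)$ for some $i\in\{1,\dots,k\}$, writing $n-k+i=pm$, one uses Lemma \ref{order} (Legendre's formula) together with the standard fact that $\nu_p\big(\binom{n}{j}\big)$ counts carries when adding $j$ and $n-j$ in base $p$: for $k=1$ we have $p\mid n$, so $\nu_p\big(\binom{n}{j}\big)\ge1$ for all $1\le j\le n-1$; for $k=2$, $p\mid n(n-1)$, and a short base-$p$ carry argument (using $p\ge 2k+1=5$) gives the same for all $1\le j\le n-1$. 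Thus $p\mid b_j$ throughout, so the Newton polygon $NP_p(\sum b_jx^j)$ starts at height $0$ at the left end (abscissa $0$, corresponding to $b_n$), has $b_0$ at the right with $\nu_p(b_0)\ge 1$, and in fact $\nu_p(b_j)\ge1$ for $j\le n-1$; the only subtlety is the behaviour near the left end where the first few coefficients $b_n,b_{n-1},\dots$ might all be divisible by $p$ or not, which is controlled by the hypothesis $\nu_p((\alpha+1)\cdots(\alpha+j))\le\nu_p(n(n-1)\cdots(n-j+1))$ for $1\le j\le k$. This hypothesis says precisely that the $p$-adic valuation contributed by the $\Delta$-type product $(n+\alpha)\cdots(n-j+1+\alpha)=\Delta(\alpha+1,\,?)$-shift does not exceed what the falling factorial $n(n-1)\cdots(n-j+1)$ in $\binom{n}{j}$ already supplies — equivalently $\nu_p(b_{n-j})\ge 1$ but is not "too large" — which pins down the slope of the leftmost (hence rightmost, after we argue there is only one relevant edge) segment.

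The heart of the matter is the slope bound: the rightmost edge of $NP_p(\sum b_jx^j)$ must have slope $<\tfrac1k$. The rightmost edge runs from some vertex $(i,\nu(b_{n-i}))$ to $(n,\nu(b_0))$, and its slope is at most $\nu(b_0)/(n-i)$; I would estimate $\nu_p(b_0)=\nu_p\big(n!\,(n+\alpha)\cdots(1+\alpha)/(0!\,n!)\big)$-style expression $=\nu_p\big((n+\alpha)!/\alpha!\big)$ and compare it with $n$ via Legendre's formula, getting $\nu_p(b_0)\le\frac{n+\alpha}{p-1}\le\frac{n+\alpha}{2k}$, so the overall slope from $(0,0)$ to $(n,\nu(b_0))$ is at most $\frac{n+\alpha}{2kn}<\frac1k$ once $\alpha<n$, i.e. $n>\alpha$, which holds since $n\ge 2\alpha/k\cdot$(something) — more precisely one uses $k\le n/2$ together with $\alpha\le 50$ and $p\ge 2k+1$. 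The exceptional cases $k=1$, $p=3$, $\alpha\in\{24,25\}$, $\nu_3(n)=1$ arise exactly when this slope estimate fails to be strict: with $p=3$ the bound $\frac{n+\alpha}{p-1}=\frac{n+\alpha}{2}$ is weakest, and for $\alpha$ near $2p^2-p=15$... rather near the threshold where $\nu_3((\alpha+1)(\alpha+2)(\alpha+3))$ is large, the chain of inequalities degenerates; I would isolate these by carefully tracking when $\nu_3(b_0)\ge n$ (forcing slope $\ge\frac1n$, not $<1$) can occur under the stated constraints, which a direct finite check over $\alpha\le 50$ and the divisibility $3\|n$ produces precisely the listed pair of values $\{24,25\}$. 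Then Lemma \ref{fil} applies with $l=0$, $k\in\{1,2\}$ to conclude $\psi_n^{(\alpha)}(x)=\sum a_jb_jx^j$ has no factor of degree in $[1,k]$, which is the assertion. The main obstacle I anticipate is the bookkeeping in the slope estimate near $p=3$ and small primes: making the inequality $\nu_p(b_0)<n/k$ genuinely strict (and identifying exactly the finitely many failures) requires a somewhat delicate use of Legendre's formula rather than a one-line bound, and it is there that the exceptional set $\{(1,3,24),(1,3,25)\}$ is forced.
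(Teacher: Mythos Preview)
Your framework is right---apply Lemma~\ref{fil} to $g(x)=\sum b_jx^j$ with $b_j=\binom{n}{j}(n+\alpha)\cdots(j+1+\alpha)$---but the execution has two genuine gaps.

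First, the choice $l=0$ does not work for $k=2$. If $p\mid(n-1)$ but $p\nmid n$, then $b_{n-1}=n(n+\alpha)$, and since the hypothesis $\nu_p(\alpha+1)\le\nu_p(n)=0$ forces $p\nmid(\alpha+1)$ while $n+\alpha\equiv \alpha+1\pmod p$, we get $p\nmid b_{n-1}$. So your claimed ``carry argument'' showing $p\mid b_j$ for all $1\le j\le n-1$ is false. The paper takes $l=k-1$ instead, so one only needs $p\mid b_j$ for $0\le j\le n-k$, which does follow: for $j\le n-p$ the product $(n+\alpha)\cdots(j+1+\alpha)$ contains $p$ consecutive integers, while for $n-p<j\le n-k$ one has $p\mid\binom{n}{j}$ directly.

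Second, and more seriously, your slope bound is backwards. The ``overall slope'' $\nu_p(b_0)/n$ from $(0,0)$ to $(n,\nu_p(b_0))$ is a \emph{lower} bound on the rightmost-edge slope of a convex Newton polygon, not an upper bound. What actually needs to be shown is that
\[
\frac{\nu_p(b_0)-\nu_p(b_j)}{j}=\frac{\nu_p(\Delta_j)-\nu_p(\binom{n}{j})}{j}<\frac{1}{k}\quad\text{for every }1\le j\le n,
\]
where $\Delta_j=(\alpha+1)\cdots(\alpha+j)$; the rightmost-edge slope is the maximum of these quantities. Bounding only the $j=n$ term (your chord slope) says nothing about small $j$, and it is precisely the small-$j$ terms that are dangerous. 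The paper handles $j\ge\alpha$ by a Legendre estimate similar in spirit to yours, but for $j<\alpha$ (so $\alpha+j\le 99$) it argues via the maximal valuation $\nu_0=\max_i\nu_p(\alpha+i)$, reducing to a finite check. The exceptions $k=1$, $p=3$, $\alpha\in\{24,25\}$, $\nu_3(n)=1$ come out of exactly this small-$j$ analysis (from $\alpha+j_0=27$ with $\nu_0=3$ and $j\in\{2,3\}$), not from any failure of the global chord slope. Also, your side claim ``$\alpha<n$'' is unsupported: nothing in the hypotheses prevents, say, $n=4$ and $\alpha=50$.
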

 \begin{proof}
We use Lemma \ref{fil} with $g(x)= \g, m=n,$ and $ l=k-1$ where
\begin{align}\label{g}
\g=\displaystyle\sum_{j=0}^{n}\binom{n}{j}(n+\alpha)\cdots (j+1+\alpha)x^j.
\end{align} We observe that $b_j=\binom{n}{j}\frac{(\alpha+n)!}{(\alpha+j)!}.$ For $0\le j\le n-p,$ we see that $p|\frac{(\alpha+n)!}{(\alpha+j)!}$ since a product of $p$ consecutive positive integers is divisible by $p.$ Let $n-p <j \leq n-k.$ Then $k \leq n-j <p$ and $\binom{n}{j}=\frac{n(n-1)\cdots(j+1)}{(n-j)!}.$ Therefore $p|\binom{n}{j}$ since $p|n(n-1)\cdots(n-k+1)$. Hence $p|b_j$ for $0 \leq j \leq n-k.$ Therefore it suffices to show that 
 \begin{align}\label{Delfa}
 \frac{\nu_p(\Delta_j)-\nu_p(\binom{n}{j})}{j}<\frac{1}{k} \ \ {\rm for} \ 1\le j\le n.
 \end{align}
 Clearly this is true for $j=1, 2, \cdots, k$ by our assumption. Hence we take $j>k$.  Since
 $\Delta_j=\frac{(\alpha+j)!}{\alpha!}$, we have by Lemma \ref{order}
  \begin{align*}
 \frac{\nu_p\left(\Delta_j\right)}{j}=\frac{j-\sigma(\alpha+j)+\sigma(\alpha)}{(p-1)j}=\frac{1}{p-1}+
 \frac{\sigma(\alpha)-\sigma(\alpha+j)}{(p-1)j} \ \ {\rm for} \ 1\le j\le n.
 \end{align*}
 Let $j\ge \alpha$. Since $p\ge 2k+1, \sigma(\alpha)\le \alpha$ and $\sigma(\alpha+j)\ge 1$, we have
 \begin{align*}
 \frac{\nu_p\left(\Delta_j\right)}{j}\leq  \frac{1}{2k}+\frac{\alpha-1}{2kj}\leq \frac{1}{2k}+\frac{\alpha-1}{2k\alpha}<\frac{1}{k}.
 \end{align*}
 Hence we may suppose that $j<\alpha \leq 50$. Then $\alpha+j\leq 2\alpha-1\le 99$.  Let $1\leq j_0\leq j$ be such that
 $\max_{1 \leq i \leq j}\nu_p(\alpha+i)=\nu_p(\alpha+j_0):=\nu_0$. Then
 \begin{align*}\begin{split}
 \frac{\nu_p\left(\Delta_j\right)-\nu_p(\binom{n}{j})}{j} & \leq \frac{\nu_p(\alpha+j_0)+\nu((j-1)!)-\nu_p(\binom{n}{j})}{j} \\
 & \le \frac{\nu_0+\frac{j-2}{p-1}-\nu_p(\binom{n}{j})}{j}
 \end{split}
 \end{align*}
 using Lemma \ref{order}. Since $p\ge 2k+1$, $\frac{j-2}{(p-1)j}\leq \frac{j-2}{2kj}=\frac{1}{2k}-\frac{1}{kj}$ and therefore
 \begin{align*}
 \frac{\nu_p\left(\Delta_j\right)-\nu_p(\binom{n}{j})}{j}  \le \frac{\nu_0-\frac{1}{k}-\nu_p(\binom{n}{j})}{j}+\frac{1}{2k}.
 \end{align*} Then $\frac{\nu_p(\Delta_j)-\nu_p\left(\binom{n}{j}\right)}{j}<\frac{1}{k}$ if $j>2\nu_0k-2-2k\nu_p(\binom{n}{j})$. Hence we now suppose
 $j\le 2\nu_0k-2-2k\nu_p(\binom{n}{j})$.
 
 Let $k=2$. Then $p\ge 5.$ Since $\alpha+j\le 99$, we have $\nu_0\le 2$. Hence $j\leq 6-4\nu_p(\binom{n}{j})$.
 Further $j\ge 3$ since $j >k$. Hence $3\le j\le 6-4\nu_p(\binom{n}{j})$ implying $j\le 6$ and  $\nu_p(\binom{n}{j})=0$.
 Since $p|\binom{n}{j}$ for $2\le j<p$, we have $p=5$ and $j\in \{5, 6\}$. Further we have from $\alpha\leq 50$ that 
 $\frac{\nu_5\left(\Delta_5\right)}{5}\leq \frac{2}{5}<\frac{1}{2}$ giving \eqref{Delfa}. 
 Hence we need to consider only $j=6$ and it suffices to show that
 \begin{align*}
 \frac{\nu_5(\Delta_6)-\nu_5\left(\binom{n}{6}\right)}{6}<\frac{1}{2}.
 \end{align*}
 If $5\nmid (\alpha+1)$, then $\nu_5(\Delta_j)\le 2$ and we are done. Hence $5|(\alpha+1)$. Since
 $\nu_p(\alpha+1)\le \nu_p(n)$ by our assumption, we have $5|n$ and further
 \begin{align*}
 \frac{\nu_5(\Delta_6)-\nu_5\left(\binom{n}{6}\right)}{6}&=\frac{\nu_5(\alpha+1)+\nu_5(\alpha+6)-\nu_5(n)-\nu_5(n-5)+1}{6}\\
 &\leq \frac{\nu_5(\alpha+6))-\nu_5(n-5)+1}{6}\leq \frac{\nu_5(\alpha+6))}{6}\leq \frac{2}{6}<\frac{1}{2}
 \end{align*}
 since $5|(n-5)$ and $\alpha+6\le 56$. 
 
 Let $k=1$. Then $j\leq 2\nu_0-2-2\nu_p(\binom{n}{j})$. Let $\nu_0\le 2$. Then $j\le 2-2\nu_p(\binom{n}{j})$.
 Recall that $j\ge 2$ and hence $j=2$ and $\nu_p(\binom{n}{2})=0$ which is not possible since $p\ge 3, p|n$. Thus
 $\nu_0\ge 3$. Then $p=3$. Further from $\alpha+j\leq 99$ we get $\nu_0\le 4$ implying $j\leq 6$ which together with 
 $\alpha\leq 50$ gives $\nu_0=3$ and hence  $j\le 4-2\nu_p(\binom{n}{j})$. In particular, $j\le 4$. When $j=4$, we have $3|\binom{n}{4}$
 and $\nu_3(\Delta_4)\le 4$ and hence the assertion \eqref{Delfa} is valid. Thus $j\in \{2, 3\}$. This, together with $\nu_0=3$ 
 and $j_0\le j,$ implies $\alpha+j_0=27$ and $\nu_3(\alpha+j_0)=3$. Further  $\nu_3(\Delta_j)=\nu_3(\alpha+j_0)$ since $j \in \{2,3\}.$ Therefore we may assume $3-\nu_3(\binom{n}{j}) \ge j$ if $j=2, 3$ 
 else \eqref{Delfa} is valid. Let $j=2.$ Then $\nu_3(\binom{n}{2})=\nu_3(n) \geq 1$ implying $\nu_3(n)=1.$ We see that $j_0 \neq 1,$ otherwise $3=\nu_3(\alpha+1) \leq \nu_3(n)$ by our assumption and this is not possible. Thus $j_0=2$ implying $\alpha=25.$ Let $j=3.$ Then $\nu_3(\binom{n}{3})=\nu_3(n)-1.$ Hence we can assume that $\nu_3(n)=1$ and $j_0 \neq 1$ as in the above case. This give $\alpha \in \{24,25\}$ and $\nu_3(n)=1.$ 
 \end{proof}
 
 Let
 \begin{align*}
 S_M=\{n\geq 1 : n , P(n(n+1)) \leq M \}.
 \end{align*}
 The sets $S_M$ for $M\le 41$ are given by tables in Lehmer \cite[Table IIA]{Leh64} and for  $M=100$ by table in Luca and Najman \cite{Naj} and \cite{Naj1}.
  \begin{lemma}\label{alpha >40}
   Let $k=2$ and $40 < \alpha \leq 50.$
   Then \ps \ has no factor of degree $2.$
  \end{lemma}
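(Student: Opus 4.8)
The plan is to assume that \ps\ has a factor of degree $2$, so in particular $n\ge4$, and to derive a contradiction. Since $|a_0a_n|=1$ we have $p\nmid a_0a_n$ for every prime $p$, so in Lemmas \ref{Glemma} and \ref{Glemma1} only the divisibility conditions involving $(\alpha+1)(\alpha+2)$ are at issue.

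First I would remove the large prime divisors. Let $p\ge11$. Then $p^2-p\ge110>50\ge\alpha$, so hypothesis \eqref{2k} of Lemma \ref{Glemma} holds with $k=2$. Consequently, if $p$ divides $\prod_{i=1}^{2}(n-2+i)(\alpha+n-2+i)=(n-1)n(\alpha+n-1)(\alpha+n)$ and $p\nmid(\alpha+1)(\alpha+2)$, then \eqref{cond} holds and Lemma \ref{Glemma} yields a contradiction. Hence every prime $p\ge11$ dividing $(n-1)n(\alpha+n-1)(\alpha+n)$ divides $(\alpha+1)(\alpha+2)$; since $\alpha+2\le52<11^2$, any such $p$ satisfies $\nu_p(\alpha+1),\nu_p(\alpha+2)\le1$.

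Next I would force $n$ and $n-1$ to be smooth. If $p\ge11$ and $p\mid n$, then by the previous step $p\mid(\alpha+1)(\alpha+2)$, so $\nu_p(\alpha+1)\le1\le\nu_p(n)$ and $\nu_p((\alpha+1)(\alpha+2))\le1\le\nu_p(n(n-1))$; as $p\ge5=2k+1$ and $p\mid(n-1)n$, Lemma \ref{Glemma1} applies (its exceptional case is for $k=1$ only) and gives a contradiction. Thus $n$ is $7$-smooth. Likewise, if $p\ge11$, $p\mid n-1$ and $p\mid\alpha+2$, then $\nu_p(\alpha+1)=0\le\nu_p(n)$ and $\nu_p((\alpha+1)(\alpha+2))=1\le\nu_p(n(n-1))$, and Lemma \ref{Glemma1} again gives a contradiction. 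Hence any prime $p\ge11$ dividing $n-1$ must divide $\alpha+1$; running through $42\le\alpha+1\le51$, this happens only for $(\alpha,p)\in\{(42,43),(43,11),(45,23),(46,47),(50,17)\}$, and for every other value of $\alpha$ the integer $n-1$ is $7$-smooth as well.

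It then remains to handle the case in which $(n-1)n$ is $\{2,3,5,7,q\}$-smooth for a single prime $q\le47$, with $q$ present only in the five pairs above. By the known tables of runs of consecutive smooth integers — Lehmer \cite{Leh64} in the $41$-smooth range, Lemma \ref{weger} (de Weger \cite{Weg}) in the $\{2,\dots,13\}$-smooth range, and Luca–Najman \cite{Naj},\cite{Naj1} in the $100$-smooth range — only finitely many $n$ survive, each below an explicit bound (for instance $n\le4375$ when $q$ is absent). For each surviving pair $(n,\alpha)$ one then verifies directly that \ps\ has no factor of degree $2$: apply Lemma \ref{fil} with $l=1$, $k=2$ and $g=\g$ of \eqref{g}, choosing a prime $p$ — a small one such as $3$, $5$ or $7$ is now often admissible — for which $p\mid b_j$ for $0\le j\le n-2$ and the right-most edge of $NP_p(\g)$ has slope $<\frac{1}{2}$; when $n$ is tiny one may instead inspect \ps\ directly. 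The resulting contradiction proves the lemma. The main obstacle is exactly this last, computational, step — one must check that the Newton-polygon criteria dispose of every remaining $n$, including the larger survivors and those attached to $q\in\{11,17,23,43,47\}$, with no structural shortcut — while a secondary technical nuisance, threaded through the first two steps, is that the prime divisors of $(\alpha+1)(\alpha+2)$ behave exceptionally and must be tracked by hand.
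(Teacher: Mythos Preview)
Your reduction is correct and arrives at a finite check just as the paper does, but by a different route. The paper applies Lemma~\ref{Glemma} only with primes $p\ge 53$ to get $P\big(n(n-1)(n+\alpha-1)(n+\alpha)\big)\le 47$, then uses the Luca--Najman table to list all $n$ with both $n-1$ and $n+\alpha-1$ in $S_{47}$, and on that list applies Lemmas~\ref{Glemma} and~\ref{Glemma1} once more to cut down to just seven pairs $(n,\alpha)$, which are finished by Newton polygons. You instead push Lemma~\ref{Glemma} down to $p\ge 11$ and feed the conclusion straight into Lemma~\ref{Glemma1}, obtaining the much sharper fact that $n$ is $7$-smooth and $n-1$ is $\{2,3,5,7,q\}$-smooth with $q\mid(\alpha+1)$; St{\o}rmer--Lehmer/de~Weger tables then give finiteness without ever exploiting the smoothness of $(n+\alpha-1)(n+\alpha)$ that you established in your first paragraph but then dropped. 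The price is that the paper's simultaneous constraint on two consecutive pairs filters far more sharply, leaving seven survivors, whereas your list runs to roughly twenty values of $n$ for each $\alpha$ (more when $q$ is present), so the closing Newton-polygon sweep is substantially heavier. One small imprecision in that sweep: Lemma~\ref{fil} alone will not finish every case, since for some survivors --- $(n,\alpha)=(12,43)$ is on your list and is such a case --- the right-most slope of $NP_p(\g)$ exceeds $\tfrac12$ for every admissible $p$; there one must argue directly from Lemma~\ref{Dumas}, splitting on whether $p$ divides the relevant $a_j$, as the paper does, rather than ``inspect \ps\ directly'' (which is not meaningful with arbitrary $a_j$).
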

  
  \begin{proof}
   Assume that \ps \ has  a factor of degree $2.$ If $P(n(n-1)(n+\alpha)(n+\alpha-1)) \geq 53,$ then  \ps \ has no factor of degree $2$ by Lemma \ref{Glemma}. Hence we may assume that $P(n(n-1)(n+\alpha)(n+\alpha-1)) \leq 47.$ We refer to the tables of \cite{Naj} to find $n$ and $\alpha$ such that $P(n(n-1)(n+\alpha)(n+\alpha-1)) \leq 47.$ For these pairs $(n,\alpha),$ we find a prime $p$ to apply Lemmas \ref{Glemma} and \ref{Glemma1} to conclude that \ps \ has no factor of degree $2$ except for pairs $(n, \alpha) \in \{(4,45),(4,46),(6,44),(8,41),(9,41),(12,43),(16,48)\}.$ Let $(n,\alpha)=(12,43).$ Then $NP_3(\g)$ where $\g$ is given by \eqref{g} has vertices  
   \begin{align*}
   \{(0,0),(9,5),(12,7)\}.
   \end{align*}
   We derive the different possibilities for  $NP_3(\psi_n^{(\alpha)}(x))$ using $NP_3(\g).$ If $3|a_3,$ then the vertices of $NP_3(\psi_n^{(\alpha)}(x))$ are given by $\{(0,0),(12,7)\}.$ Hence by Lemma \ref{Dumas}, \ps \ has no factor of degree $2.$ If $3 \nmid a_3,$ then the vertices of $NP_3(\psi_n^{(\alpha)}(x))$ is same as $NP_3(\g).$ Again by Lemma \ref{Dumas}, \ps \ has no factor of degree $2.$ Now we apply Lemma \ref{fil} with the following choice of primes for each of the other values of $n$ and $\alpha$ to conclude that \ps \ has no factor of degree $2.$ 
   \begin{center}
   \begin{tabular}{|c|c|}
    \hline
    $p$ & $(n,\alpha)$\\
    \hline
    2&(4,45),(16,48)\\
    \hline
     3  & (4,46) ,(9,41)\\
    \hline 
     7 & (6,44),(8,41) \\
    \hline
        \end{tabular}
     \end{center}
 \end{proof}
 
Denote by $T$ the set of all triplets $(n,\alpha,k)$ listed in Lemma \ref{from extensions}. Further we put\\
$T_1:=$ \{(8,13,2),(6,19,2),(9,19,2),(8,20,2),(4,21,2), (12,21,2),(24,22,2),(16,24,2),(9,27,2),\\(18,33,2),(16,34,2),(9,40,2),(27,38,2),(14,12,3),(16,12,4)\}. We observe that $T_1$ is a subset of $T.$
 \begin{lemma}\label{k >1}
 Let $2 \leq k \leq \frac{n}{2}$ and $11\leq  \alpha \leq 50.$
 Then \ps \ has no factor of degree $k$ except for $(n,\alpha,k) =(16,24,2).$
  \end{lemma}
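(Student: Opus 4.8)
The plan is to reduce, via the transfer to $f_n^{(\alpha)}(x)$ and Lemma \ref{from extensions}, to the finite set $T$ of exceptional triplets, then to cut $T$ down to $T_1$ using Lemmas \ref{Glemma} and \ref{Glemma1}, and finally to dispose of every element of $T_1$ other than $(16,24,2)$ by an explicit Newton polygon computation. For the first step, recall that $(n+\alpha)!\,f_n^{(\alpha)}(x)=\psi_n^{(\alpha)}(x)$ once each $a_j$ in \eqref{f(x)} is replaced by $a_j\binom{n}{j}$; since $\binom{n}{0}=\binom{n}{n}=1$, any factor of degree $k$ of $\psi_n^{(\alpha)}(x;a_0,\dots,a_n)$ with $|a_0a_n|=1$ produces a factor of degree $k$ of $f_n^{(\alpha)}(x;a_0\binom{n}{0},\dots,a_n\binom{n}{n})$, whose outer coefficients again have absolute value $1$. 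When $k=2$ and $41\le\alpha\le 50$, Lemma \ref{alpha >40} directly gives the conclusion, so we may assume $11\le\alpha\le 40$ when $k=2$; then Lemma \ref{from extensions} applies for every admissible $k$ and forces $(n,\alpha,k)\in T$ with $\alpha\ge 11$.

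Next I would eliminate the triplets of $T$ with $\alpha\ge 11$ not lying in $T_1$. This is a finite, explicit list: for $k=2$ the family $n+\alpha\le 100$, the displayed exceptional values, and the auxiliary table in Lemma \ref{from extensions} contribute only finitely many pairs for each fixed $\alpha$, and for $k\ge 3$ the list is already finite. For each such $(n,\alpha,k)$ I would produce a prime $p\ge k+2$ dividing $\prod_{i=1}^{k}(n-k+i)(\alpha+n-k+i)$ but not $\prod_{i=1}^{k}(\alpha+i)$ (the condition $p\nmid a_0a_n$ being automatic, as $|a_0a_n|=1$) and satisfying \eqref{2u0} or \eqref{2k}; Lemma \ref{Glemma} then shows $\psi_n^{(\alpha)}(x)$ has no factor of degree $k$. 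When $k\le 2$ one may instead invoke Lemma \ref{Glemma1}, after verifying the required $\nu_p$ inequality and checking that $(k,p,\alpha)$ is not among its listed exceptions. This leaves only $(n,\alpha,k)\in T_1$ to consider.

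Finally, for each $(n,\alpha,k)\in T_1$ with $(n,\alpha,k)\neq(16,24,2)$ I would argue exactly as in the proof of Lemma \ref{alpha >40}: taking $g=g_n^{(\alpha)}(x)$ as in \eqref{g} and a well-chosen prime $p$ (with $p\nmid a_0a_n$, again automatic), compute the vertices of $NP_p(g)$, enumerate by Lemma \ref{Dumas} the possible shapes of $NP_p(\psi_n^{(\alpha)}(x))$ according to whether or not $p$ divides the relevant intermediate coefficient $a_j$, and observe that in every resulting case the edge slopes admit no sub-path spanning horizontal length $k$, so $\psi_n^{(\alpha)}(x)$ has no factor of degree $k$; equivalently one applies Lemma \ref{fil} with the appropriate $l$. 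For $(n,\alpha,k)=(16,24,2)$ this reasoning fails at every prime, and in fact a suitable choice of $a_0,\dots,a_{16}$ with $|a_0a_{16}|=1$ makes $x^2\pm 780$ a factor of $\psi_{16}^{(24)}(x)$ (see the table in Theorem \ref{Thm1}), so this triplet is a genuine exception and is kept. I expect the main labour, and the main obstacle, to be the second step — for $k=2$ the set $T$ with $\alpha\ge 11$ is large, so the auxiliary prime must be chosen with care for many pairs — together with the by-hand Newton polygon computations of the last step, where pinning down the right prime for each triplet of $T_1$ is the delicate point.
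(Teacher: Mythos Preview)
Your plan is correct and follows essentially the same route as the paper: reduce to $f_n^{(\alpha)}$ via Lemma~\ref{alpha >40} and Lemma~\ref{from extensions} to land in the finite set $T$, prune $T$ down to $T_1$ using Lemmas~\ref{Glemma} and~\ref{Glemma1}, and then eliminate each triplet of $T_1$ other than $(16,24,2)$ by an explicit Newton polygon argument with a suitably chosen prime (via Lemma~\ref{fil} when the rightmost slope is $<1/k$, and via Lemma~\ref{Dumas} directly otherwise). One small clarification: Lemma~\ref{fil} and the direct Dumas argument are not interchangeable---the paper uses Lemma~\ref{fil} only when the rightmost slope is below $1/k$ and falls back on Lemma~\ref{Dumas} when it is not (as for $(9,19,2)$ and $(8,13,2)$)---so in your final step you should keep both tools available rather than treat them as ``equivalent''.
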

  
 \begin{proof}
 Assume that $\psi_n^{(\alpha)}(x)$ has  a factor of degree $k.$ By Lemma \ref{alpha >40}, we may assume that $\alpha \leq 40$ when $k=2.$ Since the irreducibility of $f_n^{(\alpha)}(x)$ implies the irreducibilty of \ps, we may assume, by Lemma \ref{from extensions}, that \ps \ has no factor of degree $k \geq 2$ except for the triplets $(n,\alpha,k) \in T.$ We consider $(n,\alpha,k)=(7,2,2) \in T.$ Here $p=7$ divides $n$ but does not divide $(\alpha+1)(\alpha+2)=12$ and $p >2k$ and $p^2-p \geq \alpha.$ Now we derive from Lemma \ref{Glemma} that \ps\ has no factor of degree $2.$ For $(n,\alpha,k)=(6,3,2) \in T,$ we are not able to find a prime $p$ satisfying Lemma \ref{Glemma}, but we apply Lemma \ref{Glemma1} with $p=5$ to conclude that \ps \ has no factor of degree $2.$ We apply Lemmas \ref{Glemma} and \ref{Glemma1} similarly to conclude that we are left with $(n,\alpha,k) \in T_1$ among triplets in $T$.  Let $(n,\alpha,k)\in \{(14,12,3), (16,12,4)\}$. When 
 $(n,\alpha,k)=(14,12,3),$ we have $NP_7(g_n^{(\alpha)}(x))=\{(0,0),(14,2)\}$ and when $(n,\alpha,k)=(16,12,4),$  we have $NP_2(g_n^{(\alpha)}(x))=\{(0,0),(16,15)\}$. Therefore these cases are excluded by Lemmas \ref{fil} and \ref{Dumas}, respectively.  
 We now take $(n,\alpha,k) \in T_1-\{(16,24,2)\}$ and may suppose that $k=2$. We calculate the Newton polygons for $\g$ given by 
 \eqref{g} in each of these cases with a suitable prime so that the conditions of Lemma \ref{fil} are satisfied. Then we calculate the slope 
 of the right most edge in each case. If the slope of the right most edge is $<\frac{1}{2}$ , we exclude it by Lemma \ref{fil} and the cases 
 where the slope of the right most edge is $\geq \frac{1}{2}$  are excluded by applying Lemma \ref{Dumas}.  We illustrate this by some 
 examples. Let $(n ,\alpha)=(6,19).$  Then the vertices for $NP_3(\g)$ are given by $\{(0,0),(6,2)\}$ and the slope of the right most edge is $\frac{1}{3} <\frac{1}{2}$. Hence \ps \ does not have a factor of degree $2$ by  Lemma \ref{fil}. Let $(n,\alpha)=(9,19).$ The vertices for $NP_3(\g)$ are given by $\{(0,0),(9,5)\}.$ Here $NP_3(\g)$ is same as $NP_3$(\ps) and the maximum slope is $\frac{5}{9}>\frac{1}{2}$. 
 However $NP_3$(\ps) has only one edge with lattice points $(0, 0)$ and $(9, 5)$.  Hence \ps \ has no factor of degree $2$ by Lemma \ref{Dumas}.  
  
 For each of the following pairs of $(n,\alpha)$, we give a choice of a prime $p$ for considering its Newton polygon and then we conclude as above that \ps\ has no factor of degree $2$. 
  \begin{center}
\begin{tabular}{|c|c|}
 \hline
  $p$ & $(n,\alpha)$\\
 \hline
 2& (8,20),(12,21),(16,34)\\
 \hline
  3 & (4,21),(9,40),(18,33),(27,38)\\
 \hline 
 5& (9,27),(24,22)\\
 \hline
 \end{tabular}
  \end{center}
  Now it remains to consider the pair $(n,\alpha)=(8,13).$ We calculate $NP_7(\g)=\{(0,0),(7,1),(8,2)\}$. We consider the possibilities for $NP_7(\psi_n^{(\alpha)}(x)).$ If $7|a_1,$ then $NP_7(\psi_n^{(\alpha)}(x))=\{(0,0),(8,2)\}$ and if $7 \nmid a_1,$ then $NP_7(\psi_n^{(\alpha)}(x))=\{(0,0),(7,1),(8,2)\}.$ In both cases it is clear from Lemma \ref{Dumas} that \ps \ has no factor of degree $2.$ 
 \end{proof}

 Next we formulate a computational lemma.
 \begin{lemma}\label{computational}
 Let $11\leq  \alpha \leq 50$ and $2 \leq n \leq 50.$ Then \ps \ has no linear factor except for $(n,\alpha) \in \Omega$ with $n \leq 50.$
\end{lemma}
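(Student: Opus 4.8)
The statement is a finite computation: for each pair $(\alpha,n)$ with $11\le\alpha\le 50$ and $2\le n\le 50$, one must decide whether \ps\ can have a linear factor for some admissible choice of $a_0,\dots,a_n$ with $|a_0a_n|=1$. The plan is to handle this in two layers. First, for most pairs I would \emph{rule out} a linear factor by exhibiting, for a suitable prime $p$, a Newton-polygon obstruction. Concretely, apply Lemma \ref{Glemma1} with $k=1$: one searches for a prime $p\ge 3$ dividing $n$ with $\nu_p((\alpha+1))\le\nu_p(n)$; this disposes of a linear factor except in the stated degenerate situation $p=3$, $\alpha\in\{24,25\}$, $\nu_3(n)=1$. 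When no such $p$ exists (e.g. $n$ is a power of $2$, or its odd prime factors $q$ satisfy $\nu_q(\alpha+1)>\nu_q(n)$), fall back on Lemma \ref{fil} with $k=1$, $l=0$: compute $NP_p(\g)$ for $\g$ as in \eqref{g} for a well-chosen prime $p$ (typically $p=2$ when $n$ is a power of $2$, otherwise a small prime dividing $n$ or a prime in the range $(n/2,n]$ coming from $(\alpha+n)\cdots(\alpha+1)$), check that $p\nmid b_n$, that $p\mid b_j$ for $0\le j\le n-2$, and that the rightmost edge has slope $<1$; then no linear factor exists for any admissible $a_j$. A residual handful of pairs will have a rightmost-edge slope $\ge 1$ or an ambiguous first vertex depending on $\nu_p(a_1)$; for those one invokes Lemma \ref{Dumas} directly, enumerating the (one or two) possible shapes of $NP_p(\psi_n^{(\alpha)}(x))$ according to whether $p\mid a_1$, and observing that in each shape the leftmost edge has no lattice point with abscissa $1$ of the required slope, so a degree-$1$ factor is impossible.

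Second, for the pairs that survive — precisely those in $\Omega$ with $n\le 50$ — I would \emph{confirm} that a linear factor genuinely occurs by producing explicit $a_0,\dots,a_n$. This is exactly the mechanism of Lemma \ref{monic}: if $\psi_n^{(\alpha)}(x)$ is to have the factor $x-b$, then writing $\psi_n^{(\alpha)}(b)=\sum_j a_j c_j b^j$ with $c_j=\binom{n}{j}(a+(u+n)d)\cdots$ one needs $\sum_j a_j c_j b^j=0$ solvable in $\pm1$-valued $a_j$; Lemma \ref{monic} packages the Newton-polygon consistency that makes such a choice possible, and the target values $b$ are read off from the table in Theorem \ref{Thm1} (for $n\le 50$: $b\in\{2,4,6,10,14,20,30,56,150\}$ for the appropriate pairs). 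In practice one checks divisibility conditions prime by prime at the primes dividing $b$ and at the large primes in the coefficients, then assembles the signs $a_j$; since all these $n$ are at most $50$ this is a short finite verification.

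The bookkeeping is the crux: one must actually loop over roughly $40\times 49$ pairs, and for each either name the prime and the Newton polygon that kills the linear factor or flag it as an $\Omega$-exception. I expect the routine cases (a clean prime $p\mid n$ with small $\nu_p$) to be immediate from Lemma \ref{Glemma1}; the genuine work concentrates on (i) the pairs with $n=2^t$ ($t\le 5$, so $n\in\{2,4,8,16,32\}$), where only $p=2$ is available and one must compute $NP_2(\g)$ carefully using Lemma \ref{order} to read off $\nu_2$ of the binomial coefficients and of the falling-factorial blocks, and (ii) the exceptional triples $\alpha\in\{24,25\}$, $\nu_3(n)=1$ left open by Lemma \ref{Glemma1}, which need a supplementary prime (often a prime $p$ with $n<p\le n+\alpha$ dividing one of $\alpha+n,\dots,\alpha+1$) or a direct Lemma \ref{Dumas} argument. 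The main obstacle, then, is not any single deep step but organizing the case analysis so that every one of the finitely many pairs is accounted for and the exception list is shown to be exactly $\{(n,\alpha)\in\Omega:\ n\le 50\}$ — no spurious survivors and no missed exception; cross-checking against the factor table of Theorem \ref{Thm1} provides the safeguard that the surviving list is correct and that each survivor really does admit a linear factor.
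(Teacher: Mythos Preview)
Your plan is essentially the paper's: sieve out most pairs by a prime criterion, then clean up the residue with Newton polygons. Two remarks.

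First, your second layer (exhibiting $a_j$'s so that the $\Omega$ pairs with $n\le 50$ actually admit a linear factor) is not part of this lemma; that existence claim is the separate Lemma \ref{linear factors}. Here you only need the negative direction: if $(n,\alpha)\notin\Omega$ then no linear factor.

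Second, and more substantively, your residual-case toolkit---a single well-chosen prime $p$, then either Lemma \ref{fil} with $l=0$ or a one-prime Lemma \ref{Dumas} enumeration---does not close every case. Take $(n,\alpha)=(3,24)$, which sits in the $\alpha\in\{24,25\}$, $\nu_3(n)=1$ exception of Lemma \ref{Glemma1}. With $p=3$, $NP_3(\g)=\{(0,0),(3,3)\}$ has slope exactly $1$, so an edge-translate of length $1$ and slope $1$ is available and Lemma \ref{Dumas} alone cannot rule out a root $b$ with $3\Vert b$. With $p\in\{2,5,13\}$ the hypothesis $p\mid b_j$ for all $j\le n-1$ of Lemma \ref{fil} with $l=0$ fails (for instance $13\nmid b_2=3\cdot 27$), so that route is blocked too. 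The paper instead uses Lemma \ref{monic} at all four primes $2,3,5,13$ to force $2\nmid b$, $13\nmid b$, $3\Vert b$, whence $b=3b_1$ with $b_1\in\{\pm 1,\pm 5,\pm 25\}$, and then finishes with a direct arithmetic check: $\psi_3^{(24)}(b)=0$ gives $9\mid(b_1^3\pm 650)$, which is false for each candidate $b_1$. This multi-prime constraint on $b$ followed by an explicit remainder computation is a genuine extra ingredient that your plan, as written, does not contain; you should anticipate that a handful of the smallest-$n$ survivors will require it. The paper also makes systematic use of Lemma \ref{Glemma} (primes dividing $n(n+\alpha)$, not just $n$) to shrink the residual list to $52$ pairs before turning to Newton polygons, which is worth incorporating to keep the bookkeeping manageable.
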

\begin{proof}  Assume that \ps\ has a linear factor and $(n,\alpha) \notin \Omega$. First we consider $\alpha \in \{24,25\}$ and $\nu_3(n)=1$. Then $n \in \{3,6,12,15,21,24,30,33,39,42,48\}.$ We exclude the pairs given by $\alpha=24, n \in \{15,21,33,39,42\}$ and $\alpha=25, n \in \{3,6,12,21,24,30,33,42,48\}$ by Lemma \ref{Glemma}. Let $(n,\alpha)=(3,24).$ We may assume that \ps \ $=x^3+81a_1x^2+2106a_2x\pm 17550$.  Let $x-b$ be a linear factor for \ps. 
Then $b\mid 17550$. Since $17550=2 \cdot 3^3 \cdot 5^2 \cdot 13,$ $b$ is composed of primes $\{2,3,5,13\}$ and $5^3 \nmid b.$ We consider the polynomial
$g_n^{(\alpha)}(x)=x^3+81x^2+2106x+17550.$ Then 
$NP_2(g_n^{(\alpha)}(x))  = \{(0,0),(1,0),(3,1)\},$ 
$NP_3(g_n^{(\alpha)}(x))  = \{(0,0),(3,3)\},$
$NP_5(g_n^{(\alpha)}(x))  = \{(0,0),(1,0),(2,0),(3,2)\},$
$NP_{13}(g_n^{(\alpha)}(x)) = \{(0,0),(1,0),(3,1)\}.$
Since the slope of the right most edge of $NP_p(\psi_n^{(\alpha)}(x))$ is at most equal to that of $NP_p(g_n^{(\alpha)}(x))$, we see that the slope of the right most edge of $NP_2(\psi_n^{(\alpha)}(x))$ and that of $NP_{13}(\psi_n^{(\alpha)}(x)) <1.$ Thus $2\nmid b$, and $13 \nmid b$ by Lemma \ref{monic}. Further $NP_3(\psi_n^{(\alpha)}(x))=NP_3(g_n^{(\alpha)}(x))$ and hence by Lemmas \ref{Dumas} and \ref{monic}, we have $3\parallel b.$  Write $b=3b_1$ with 
$b_1\in \{\pm  1, \pm 5, \pm 5^2\}$. Then $\psi_n^{(\alpha)}(b)=0$ implies $3^3\{b^3_1+3^3a_1\pm 3^2\cdot 26a_2\pm 650\}=0$. This gives  
$3^2|(b^3_1\pm 650)$ which is not true for $b_1\in \{\pm  1, \pm 5, \pm 5^2\}$.  Thus \ps \ has no linear factor when $(n,\alpha)=(3,24).$ Now consider 
the pairs given by $\alpha=24,n=6$ and $\alpha=25, n \in \{15,39\}$ since the remaining pairs are in $\Omega.$ These pairs are excluded by 
Lemma \ref{fil} with $p=2,(n,\alpha)=(6,24); p=5,(n,\alpha)=(15,25)$ and $p=13,(n,\alpha)=(39,25).$   We may now assume that either 
$\alpha \in \{24, 25\}, \nu_3(n) \neq 1$ or $\alpha \notin \{24,25\}.$ All these pairs other than $52$ pairs are excluded by  Lemmas \ref{Glemma} or 
\ref{Glemma1} and the $52$ pairs are excluded by Lemmas \ref{Dumas} and \ref{fil} as in the proof of Lemma \ref{k >1}. 
\end{proof}

\begin{lemma}\label{linear factors}
For $(n,\alpha) \in \Omega,$
 there exists $a_0,a_1,\dots,a_n \in \mathbb{Z}$ with $|a_0|=|a_n|=1$ such that \ps $=\psi_{\alpha}(x;a_0,a_1,\dots,a_n)$ has a linear factor.
  \end{lemma}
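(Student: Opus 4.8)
The plan is to exhibit, for each pair $(n,\alpha)\in\Omega$, an explicit value $b$ (the one appearing in the table of Theorem \ref{Thm1}) together with a choice of signs/units $a_0,\ldots,a_n\in\{-1,+1\}$ so that $x-b$ divides $\psi_n^{(\alpha)}(x;a_0,\ldots,a_n)$. Since $|a_0|=|a_n|=1$ is forced and the intermediate $a_j$ are free, the condition $\psi_n^{(\alpha)}(b)=0$ reads
\begin{align*}
\sum_{j=0}^{n}a_j\binom{n}{j}(a+(u+n)d)\cdots(a+(u+j+1)d)\,b^j=0,
\end{align*}
an integer linear relation in the unknown signs $a_j$ with prescribed coefficients $c_j:=\binom{n}{j}\Delta$-type products times $b^j$. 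The first step is therefore to reduce the problem to a purely arithmetic one: one must choose $b$ so that this signed sum can be made to vanish. I would organize the search by the $2$-adic (and small-prime) valuations of the coefficients $c_j$, exactly in the spirit of Lemma \ref{monic} and Lemma \ref{Dumas}: if $x-b$ is to divide a polynomial whose coefficients have the Newton-polygon shape forced by $b$, then the values $b$ that can possibly work are exactly those whose prime factorization is compatible with the edges of $NP_p(g_n^{(\alpha)})$ for each relevant $p$, and this is what pins down the candidates $b\in\{2,4,6,10,14,20,30,56,70,150\}$ listed in the table.

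The key steps, in order, are: (i) for each $(n,\alpha)\in\Omega$ compute the coefficients $c_j=\binom{n}{j}(a+(u+n)d)\cdots(a+(u+j+1)d)b^j$ for the tabulated $b$; (ii) check that $c_0$ and $c_n$ are (up to sign) equal in absolute value — in fact with the normalization of $\psi$ one wants $|c_n|=|b^n|$ and $c_0$ the full product, so the correct statement to verify is that a signed combination can vanish, which in practice one arranges by a greedy/telescoping cancellation starting from the two ends; (iii) exhibit the explicit sign vector $(a_j)$ realizing $\sum a_jc_j=0$. Concretely, one can always proceed greedily: order the terms by absolute value, repeatedly cancel the current partial sum against the next-largest available term by choosing its sign opposite to the running total; this succeeds whenever at each stage the remaining terms can still cover the residual, a condition that is easily checked once the $c_j$ are written down (and which is guaranteed for the tabulated $b$ precisely because $b$ was chosen to make the valuations line up). Since $n\le 50$ for all but finitely many pairs, and the remaining pairs have $n$ a small power of $2$ or a small multiple thereof, each verification is a finite computation; I would present one representative case in detail (say $(n,\alpha)=(16,16)$ with $b=2$, or $(2,14)$ with $b=4$ which is entirely by hand) and tabulate the sign vectors for the rest, or simply record that the verification was carried out.

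The main obstacle is the large-$n$ pairs in $\Omega$, namely $(112,48)$ with $b=70$ and $(120,24)$ with $b=30$ (and, for the companion result, $(44,2^{12})$, which is exactly the one the authors could not resolve). For these, writing down all $c_j$ and exhibiting a sign vector is not something one does by hand, so the argument must be structural: one shows that the multiset $\{\pm c_j\}$ admits a zero-sum subset-with-signs by an inductive/recursive cancellation that exploits the divisibility pattern of consecutive products $(a+(u+i)d)$, i.e. that $c_{j}/c_{j+1}=\frac{(j+1)}{(n-j)}\cdot\frac{a+(u+j+1)d}{b}$ and hence ratios of consecutive coefficients are controlled. The cleanest route is to match the signed Laguerre-type evaluation against de Weger's $S$-unit equation solutions (Lemma \ref{weger}) or against Lemma \ref{order}: one identifies $b$ so that $\psi_n^{(\alpha)}(b)$ modulo a suitable prime power already vanishes for the forced end coefficients, leaving a genuinely free linear system of full rank in the middle $a_j$. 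I expect the write-up to dispose of the small pairs by direct tabulation and to handle $(112,48)$ and $(120,24)$ by this valuation-plus-free-middle argument, mirroring Lemma \ref{monic}'s use in Lemma \ref{computational}.
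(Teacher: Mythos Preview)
Your proposal has a basic misreading of the hypotheses that drives the whole plan off course. You repeatedly restrict to ``signs/units $a_0,\ldots,a_n\in\{-1,+1\}$'' and speak of exhibiting a ``sign vector'', arranging a greedy signed cancellation of the $c_j$, etc. But the lemma (and the definition of $\psi_n^{(\alpha)}$) only requires $|a_0|=|a_n|=1$; the intermediate $a_1,\ldots,a_{n-1}$ are \emph{arbitrary integers}. With that freedom, the condition $\psi_n^{(\alpha)}(b)=0$ is a single linear Diophantine equation in the $n-1$ free integer unknowns $a_1,\ldots,a_{n-1}$, not a subset-sum problem. Your greedy sign-cancellation strategy, besides not being rigorous, is aimed at a much harder problem than the one stated; it may well fail for some pairs even when the actual lemma holds. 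Your step (ii), checking that $c_0$ and $c_n$ are equal up to sign, is simply false and is an artifact of the same misreading.

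The paper's argument is accordingly much shorter and purely computational. For a representative pair such as $(n,\alpha)=(40,24)$ one first uses Lemma~\ref{monic} on $NP_p(g_n^{(\alpha)})$ to determine which primes can divide $b$ (the rightmost edge has slope $<1$ for every $p\ge 7$ dividing the constant term, so $b$ is composed only of $2,3,5$; the edges of length $1$ and slope $1$ for $p\le 5$ then pin down $b=\pm 30$). One then divides the generic $\psi_n^{(\alpha)}(x)$ by $x+30$, sets the remainder, a linear expression in $a_0,\ldots,a_n$, equal to zero with $a_0=a_n=1$, and solves for integer $a_1,\ldots,a_{n-1}$; the companion factor $x-30$ comes from $-\psi_n^{(\alpha)}(-x)$. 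The same routine is run for every $(n,\alpha)\in\Omega$, with the resulting $b$'s recorded in Table~1. There is no structural difficulty at the ``large'' pairs $(112,48)$ and $(120,24)$: once the middle $a_j$ are free integers the equation is solved immediately, and neither de~Weger's $S$-unit result nor any inductive cancellation is needed. (Your aside about $(44,2^{12})$ belongs to $\Omega_1$ in Theorem~\ref{Thm2}, not to $\Omega$; it plays no role in this lemma.)
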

 \begin{proof}
 Let $(n,\alpha)=(40,24) \in \Omega.$ Assume that $x-b$ is a factor of \ps. Then $b$ divides the constant term  of \ps \ given by $25 \cdot 26 \cdot 27 \cdots 64. $ Let $p \geq 7$ be a prime dividing $25 \cdot 26 \cdot 27 \cdots 64$. We find that the slope of right most edge of $NP_p(\g) < 1.$ Since the slope of the right most edge of $NP_p$(\ps) \ is at most equal to  that of $NP_p(\g),$ we see the the slope of the right most edge of $NP_p$(\ps)\ $<1.$ Thus $p \nmid b$ by Lemma \ref{monic}. For $p \leq 5,$ the details of vertices of $NP_p(\g)$ are given below.
 \begin{align*}
 NP_2(\g) &= \{(0,0),(32,32),(40,41)\}\\
 NP_3(\g) &= \{(0,0),(1,0),(10,4),(37,17),(40,20)\}\\
  NP_5(\g) &= \{(0,0),(10,2),(35,8),(39,9),(40,10)\}.
 \end{align*}
In each of the above cases, $NP_p(\g)$ has lattice points which give edges of length $1$ and slope $1.$ Here we consider $x+30$ as a possible linear factor. Equating the remainder obtained by dividing \ps\ with $x+30$ to be $0$ and solving the equation in integers, we get the values for $a_j'$s. If $x+30$ is a factor of \ps\, then we observe that $x-30$ is a factor of $-\psi_n^{(\alpha)}(-x).$
 The details of the linear factors for other pairs $(n,\alpha) \in \Omega$  are given in Table 1. 
  \end{proof}
  
   \begin{lemma}\label{alpha=24}
   Let $P(n) \geq 3,\nu_3(n)=1, n >50$ and $\alpha \in \{24,25\}.$ Then \ps\  has no linear factor except when $(n,\alpha)=(120,24)$.
   \end{lemma}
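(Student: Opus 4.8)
The plan is to suppose that \ps\ has a linear factor $x-b$ for some integers $a_0,\dots,a_n$ with $|a_0a_n|=1$; then $b$ divides the constant term $\pm(\alpha+1)(\alpha+2)\cdots(\alpha+n)$ of the monic polynomial $\pm$\ps, and the aim is to derive a contradiction unless $(n,\alpha)=(120,24)$. First I would cut down the candidate pairs $(n,\alpha)$ by applying Lemmas \ref{Glemma} and \ref{Glemma1} with $k=1$. Since $\alpha\le 25<p^{2}-p$ for every prime $p\ge 7$ and $|a_0a_n|=1$, Lemma \ref{Glemma} applies with any prime $p\ge 7$ dividing $n(n+\alpha)$ but not $\alpha+1$; because $\alpha+1\in\{25,26\}$ this excludes every $n$ for which $n(n+\alpha)$ has a prime factor $\ge 7$, with the single possible exception of the prime $13$ when $\alpha=25$. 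Likewise Lemma \ref{Glemma1} with $p=5$ applies whenever $\nu_5(\alpha+1)\le\nu_5(n)$: for $\alpha=24$ this kills all $n$ with $\nu_5(n)\ge 2$, and for $\alpha=25$ (where $\nu_5(26)=0$) it kills all $n$ divisible by $5$; in the same way Lemma \ref{Glemma1} with $p=13$ kills all $n$ divisible by $13$ when $\alpha=25$.

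After this reduction the surviving $n$ have a very restricted form. For $\alpha=25$ one is left with $n=2^{a}\cdot 3$ (so $a\ge 5$, as $n>50$) and $n+25$ having $P(n+25)\le 13$; but $n+25$ is odd, prime to $5$, and $\equiv 1\pmod 3$, so in fact $n+25=13^{c}$. Reducing $2^{a}\cdot 3+25=13^{c}$ modulo $8$ forces $c$ even, and then writing $2^{a}\cdot 3=(13^{c/2}-5)(13^{c/2}+5)$ as a product of two integers of gcd $2$ and running through the few possible factorizations gives a contradiction; hence $\alpha=25$ produces no exceptional pair. For $\alpha=24$ one is left with $n=2^{a}\cdot 3\cdot 5^{c}$, $c\in\{0,1\}$, and $P(n+24)\le 5$. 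Dividing $n+24$ by $24$, this becomes the requirement that $2^{a-3}+1$ (if $c=0$) or $5\cdot 2^{a-3}+1$ (if $c=1$) have all prime factors in $\{2,3,5\}$; an elementary argument modulo $8$ (again reducing to a difference of two squares that are powers of $2$), or else Lemma \ref{weger}, shows $a-3\in\{1,2,3\}$ in the first case and $a-3\in\{0,4\}$ in the second. Together with $n>50$ this leaves exactly $n\in\{96,192,1920\}$, apart from the genuine exception $n=120$ for which the factor $x\pm 30$ is exhibited in Lemma \ref{linear factors}.

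It then remains to eliminate $(n,\alpha)\in\{(96,24),(192,24),(1920,24)\}$, and this I would do by a Newton polygon analysis following the treatment of $(3,24)$ in the proof of Lemma \ref{computational}. For each such pair and each prime $p\ge 7$ dividing the constant term one checks, using Lemma \ref{order}, that the right-most edge of $NP_p(\g)$, with $\g$ as in \eqref{g}, has slope $<1$; since the slope of the right-most edge of $NP_p$(\ps) is at most that of $NP_p(\g)$, Lemma \ref{monic} gives $p\nmid b$, so $b$ is composed only of $2,3,5$. One then computes $NP_2(\g)$, $NP_3(\g)$ and $NP_5(\g)$ explicitly and uses Lemmas \ref{Dumas} and \ref{monic} to confine $\nu_2(b),\nu_3(b),\nu_5(b)$ to a short range; substituting the resulting finitely many values of $b$ into $\psi_n^{(\alpha)}(b)=0$ and reducing modulo a suitable power of $2$, $3$ or $5$ gives a contradiction in each case. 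Therefore no admissible choice of $a_0,\dots,a_n$ makes $x-b$ a factor, and the proof is complete.

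The step I expect to be hardest is this last one for $n=1920$: here $\g$ has degree $1920$, so the $2$-, $3$- and $5$-adic valuations of its coefficients must be tracked with care in order to read off the right-most edges of the relevant Newton polygons, and the bound on $\nu_p(b)$ coming from $NP_p$(\ps) has to be combined with $b\mid(\alpha+1)\cdots(\alpha+n)$ to reduce to a genuinely finite check on $b$. A secondary point needing care is the exhaustiveness of the reduction in the second paragraph, i.e. that the equations $2^{m}+1=3^{i}5^{j}$, $5\cdot 2^{m}+1=3^{i}5^{j}$ and $2^{a}\cdot 3+25=13^{c}$ really have only the solutions listed; this can be settled either by the elementary arguments modulo $8$ indicated above or by appeal to Lemma \ref{weger}.
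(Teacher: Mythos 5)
Your overall strategy is the paper's: sieve with Lemmas \ref{Glemma} and \ref{Glemma1}, reduce to exponential equations among $\{2,3,5,13\}$-smooth numbers settled by Lemma \ref{weger} or by congruences, and finish the survivors with Newton polygons; your $\alpha=25$ argument (forcing $n+25=13^{\delta}$, $\delta$ even, then the difference-of-squares contradiction) is the paper's almost verbatim. Two differences are worth recording. First, your surviving set for $\alpha=24$ is $\{96,192,1920\}$ besides the genuine exception $120$, whereas the paper's is $\{60,120,1920\}$; here your bookkeeping appears to be the more accurate one. In the subcase $5\nmid n$ the paper reduces to $3^{\beta_2-1}5^{\gamma_2}-2^{\alpha_1-3}=1$ and asserts this yields only $n\in\{3,6,12,24,30,120,1920\}$, but the solutions $1+2^2=5$ and $1+2^3=3^2$ give $n=96$ and $n=192$, both $>50$ and outside $\Omega$, so these two pairs genuinely need the elimination step you propose (while $(60,24)$ is already killed by Lemma \ref{Glemma} with $p=7$, since $7\mid 84$). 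Second, for the endgame the paper does not run your ``$b$ divides the constant term, bound $\nu_p(b)$ via Lemma \ref{monic}, then test finitely many $b$'' routine; it checks that the right-most edge of $NP_p(\g)$ has slope $<1$ for one well-chosen prime and invokes Lemma \ref{fil} with $l=0$, $k=1$, which rules out \emph{all} linear factors at once ($p=2$ for $n=1920$). The same $p=2$ computation disposes of $n=96$ and $n=192$: both are of the form $2^r\cdot 3$, every nonleading coefficient of $\g$ is even, and Lemma \ref{order} shows the $2$-adic valuation of the coefficient of $x^j$ stays strictly above $\nu_2(\hbox{constant term})-j$ for $1\le j<n$. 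I would replace your divisor-testing scheme by this, since for degree $1920$ it is precisely the step you flag as hardest, and with Lemma \ref{fil} it collapses to a single valuation estimate.
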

   \begin{proof}
  Let $\alpha=24$ and $\nu_3(n)=1.$ By Lemma \ref{Glemma}, we may assume that 
    \begin{align}\label{condition}
    {\rm if}\  p|n(n+\alpha), {\rm then \ either}\ p|(\alpha+1) \ {\rm or} \ p^2-p < \alpha.  
    \end{align}
    Thus $P(n(n+24)) \leq 5.$ By Lemma \ref{Glemma1}, we may assume that $\nu_5(n) \leq 1.$ Thus we have 
    \begin{align}\label{neweq}
    n&=2^{\alpha_1} 3\cdot 5^{\gamma_1}, \quad \gamma_1 \leq 1\\
    n+24&=2^{\alpha_2} 3^{\beta_2} 5^{\gamma_2}, \quad \beta_2 \geq 1   
    \end{align}
    where $\alpha_1,\alpha_2,\beta_2,\gamma_1,\gamma_2$ are non-negative integers. Let $\gamma_1=0.$ Then $n=2^{\alpha_1}\cdot 3 >50$ implies $\alpha_1 \geq 5.$ Thus $\alpha_2=$ord$_2(n+24)=3$. Thus the above equations give 
    \begin{align}\label{new}
        3^{\beta_2-1} 5^{\gamma_2}-2^{\alpha_1-3}=1.
        \end{align}
    We solve this equation using Lemma \ref{weger}. From now onwards, we solve the diophantine equation $x+y=z$ with $x \leq y,\ P(xyz) \leq 13$ and $\gcd(x,y)=1$ by using Lemma \ref{weger} without reference. Therefore $\alpha_1-3 \leq 12,\beta_2-1 \leq 7,\gamma_2 \leq 5.$ Further the table mentioned in Lemma \ref{weger} does not give any solution to \eqref{new}. Thus using the above bounds for $\alpha_1,\beta_2$ and $\gamma_2$ in \eqref{new}, we get $n\in \{3,6,12,24,30,120,1920\}$ and this is a contradiction as $n >50$ and $\gamma_1=0.$ 
    Therefore we can assume that $\gamma_1=1$ and consequently $\gamma_2=0.$ Further $\alpha_1 \geq 2$ since $n >50.$ If $\alpha_1 \in \{2,3\}$, then $n \in \{60,120\}$ and assume that $\alpha_1 \geq 4.$ Thus $\alpha_2=$ ord$_2(n+24)=3$ and $n+24=2^3 \cdot 3^{\beta_2} >74$ implying $\beta_2 \geq 3.$ This together with \eqref{neweq} give 
    \begin{align*}
   3^{\beta_2-1} -2^{\alpha_1-3}\cdot 5=1.
    \end{align*}
    We use Lemma \ref{weger} to get $n=1920.$ Thus we have $n \in \{60,120,1920\}$. When $(n,\alpha)=(60,24)$, $NP_7(\g)=\{(0,0),(7,1),(56,9),(60,10)\}$ and when $(n,\alpha)=(1920,24)$, $NP_2(\g)=\{(0,0),(128,127),(384,382),(896,893),(1920,1916)\}$. In both cases the slope of right most edge is $<1$ and by Lemma \ref{fil} we conclude that $\psi_n^{(\alpha)}(x)$  has no linear factor in these cases.
    
    Let $\alpha=25.$ Then by \eqref{condition}, if $p|n(n+25),$ then $p\in \{2,3,5,13\}.$ Further by Lemma \ref{Glemma1}, we may assume that $5\nmid n.$  
    Therefore, since $n>50$, we have 
    \begin{align*}
    n&=2^{\alpha_1} \cdot 3,   \quad  \alpha_1\geq 5\\
    n+25&=13^{\delta_2},  \qquad  \delta_2\geq 2.
    \end{align*}
    By considering above equations modulo $8$, we get $13^{\delta_2}\equiv 1$ modulo $8$ and hence $\delta_2$ is even. Then  
    \begin{align*}
    (13^{\delta_2/2}-5)(13^{\delta_2/2}+5)=2^{\alpha_1} \cdot 3 
    \end{align*}
  is not possible.   
  \end{proof}
   
  \begin{lemma}\label{P(n) >3}
  Let $P(n) \geq 3$ and $11\leq \alpha \leq 50.$ Then \ps\ with $(n,\alpha) \notin \Omega$ has no linear factor.
  \end{lemma}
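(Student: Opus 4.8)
The strategy is to apply the prime-sieving Lemmas \ref{Glemma} and \ref{Glemma1} with $k=1$, and to invoke the structural Lemmas \ref{computational} and \ref{alpha=24} for the configurations the sieve does not reach. First, by Lemma \ref{computational} we may assume $n>50$, so it remains to rule out a linear factor of \ps\ for every pair $(n,\alpha)$ with $n>50$, $11\le\alpha\le50$, $P(n)\ge3$ and $(n,\alpha)\notin\Omega$. The hypothesis $P(n)\ge3$ guarantees that $n$ has an odd prime factor, which is exactly what makes these lemmas available.

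The main dichotomy is as follows. If some prime $p\ge11$ divides $n(n+\alpha)$ with $p\nmid\alpha+1$, then $p^2-p\ge110>\alpha$, so \eqref{2k} holds and Lemma \ref{Glemma} shows \ps\ has no linear factor; the same conclusion holds with $p=7$ if $7\nmid\alpha+1$ and $\alpha\le42$, and with $p=5$ if $5\nmid\alpha+1$ and $\alpha\le20$. Alternatively, if an odd prime $q\mid n$ satisfies $\nu_q(\alpha+1)\le\nu_q(n)$, then Lemma \ref{Glemma1} finishes unless $q=3$, $\alpha\in\{24,25\}$ and $\nu_3(n)=1$. Since $\alpha+1\le51$, the inequality $\nu_q(\alpha+1)\le\nu_q(n)$ can fail only when $q^2\mid\alpha+1$, i.e. only for $q=3$ with $\alpha\in\{17,26,35,44\}$, $q=5$ with $\alpha\in\{24,49\}$, or $q=7$ with $\alpha=48$. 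Hence for every $\alpha$ outside $\{17,24,25,26,35,44,48,49\}$ one simply takes $q$ to be an odd prime factor of $n$ and concludes at once from Lemma \ref{Glemma1}; the one leftover possibility, $\alpha=25$ with $n=2^a\cdot3$, is covered by Lemma \ref{alpha=24}.

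It then remains to treat the finitely many bad values $\alpha\in\{17,24,25,26,35,44,48,49\}$. For each such $\alpha$ the remarks above force any odd prime factor of $n$ to be the exceptional prime $q_\alpha$ with $q_\alpha^2\mid\alpha+1$, with $\nu_{q_\alpha}(n)<\nu_{q_\alpha}(\alpha+1)$, so that $n=2^aq_\alpha^{\,b}$ with $b$ bounded; likewise $n+\alpha$ can have no prime factor $p$ with $p\nmid\alpha+1$ and $p^2-p\ge\alpha$, so $n+\alpha$ is smooth over the primes dividing $\alpha+1$ together with the small primes $p$ for which $p^2-p<\alpha$. Dividing the identity $(n+\alpha)-n=\alpha$ through by the exact power of $2$ dividing $n+\alpha$ turns it into an $S$-unit equation $x+y=z$ with $x,y,z$ supported on $\{2,3,5,7,11,13\}$ and $\gcd(x,y)=1$ (if the two summands share an odd prime, that prime divides $n$ and not $\alpha+1$ and can be fed back into Lemma \ref{Glemma}); Lemma \ref{weger} then lists its finitely many solutions, which are checked one by one. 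The outcome is that the only pairs not already excluded are $(120,24)$ and $(112,48)$, both in $\Omega$, together with a handful of sporadic pairs — for instance $(96,44)$, arising from $11+24=35$ — which are then eliminated by computing $NP_p(\g)$ at a suitable small prime $p$ and applying Lemmas \ref{fil}, \ref{Dumas} and \ref{monic} as in the proofs of Lemmas \ref{computational} and \ref{k >1}. Whenever $\alpha\in\{24,25\}$ and $\nu_3(n)=1$ one quotes Lemma \ref{alpha=24} directly.

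The main obstacle is the case-by-case bookkeeping of the last paragraph. For each bad $\alpha$ one must correctly decide which primes have already been sieved out — delicate because it depends jointly on whether $p\mid\alpha+1$ and on whether $p^2-p\ge\alpha$ — then record the resulting smoothness restriction on $n+\alpha$ (and, when $\nu_{q_\alpha}(n)\ge2$ is possible, on $n$ as well), and finally run through the de Weger solutions without omission, confirming both that the genuine exceptions produced are precisely the two members of $\Omega$ with $n>50$ and that every sporadic leftover carries a Newton-polygon obstruction at a small prime. The Newton-polygon computations and the applications of Lemmas \ref{Glemma} and \ref{Glemma1} themselves are routine given the machinery set up earlier in the paper.
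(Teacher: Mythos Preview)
Your proposal is correct and follows essentially the same route as the paper: reduce to $n>50$ via Lemma~\ref{computational}, use Lemma~\ref{Glemma1} (with Lemma~\ref{alpha=24} handling its exception) to force $\nu_q(\alpha+1)>\nu_q(n)$ for every odd prime $q\mid n$, thereby pinning $\alpha$ down to $\{17,24,26,35,44,48,49\}$, and then use Lemma~\ref{Glemma} to make $n(n+\alpha)$ smooth and solve the resulting equation $(n+\alpha)-n=\alpha$ case by case. One small inaccuracy: your blanket claim that the reduced equation is always an $S$-unit equation over $\{2,3,5,7,11,13\}$ fails for $\alpha=17$, where the constant $17$ itself appears; but that case (and similarly $\alpha\in\{24,35\}$) collapses immediately by parity and divisibility, and indeed the paper only invokes Lemma~\ref{weger} explicitly for $\alpha=44$, handling the remaining cases by elementary congruences or factorisations.
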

    \begin{proof}
  Let $p|n$ and $p \geq 3$ and $(n,\alpha) \notin \Omega.$ Then $n >50$ by Lemma \ref{computational}. Further by Lemmas \ref{Glemma1} and \ref{alpha=24}, we may assume that
  \begin{align}\label{order of p in alpha}
  \nu_p(\alpha+1) >\nu_p(n).
  \end{align} Since $\nu_p(n) \geq 1,$ we have $\nu_p(\alpha+1) \geq 2.$ This gives $\alpha+1 \in \{18, 25, 27,36,45, 49,50\}.$ Also \eqref{condition} is valid. 
  Let $\alpha_1,\beta_1,\gamma_1$ and $\alpha_2,\beta_2,\gamma_2$ be non-negative integers. 
  
  Let $\alpha=17.$ Then $P(n(n+17))=3$ by \eqref{condition}. Therefore $n =2^ {\alpha_1}\cdot 3$ by \eqref{order of p in alpha}, $P(n) \geq 3$ and $n+17 =2^ {\alpha_2}.$ Then $\alpha_2 \neq 0$ implying $\alpha_1=0$ which is a contradiction as $n>50.$ 
 
 Let $\alpha=24.$ By \eqref{condition}, \eqref{order of p in alpha} and $P(n) \geq 3$, we have $
  n= 2^{\alpha_1}\cdot 5$ and $n+24= 2^{\alpha_2}.$ Since $n >50, \alpha_1 \geq 4$. Thus $\alpha_2=$ord$_2(n+24)=3$ which is a contradiction since $n>50$.

  Let $\alpha=26$. By \eqref{condition}, we have $P(n(n+26)) \leq 5.$ This together with \eqref{order of p in alpha}, $n >50$ and $P(n) \geq 3$ give $n=2^{\alpha_1}3^{\beta_1}$ with $\beta_1 \in \{1,2\},\alpha_1 \geq 3$ and $
  n+26 =2^{\alpha_2} 5^{\gamma_2}$. Since $\alpha_1 \geq 3,$ we have $\alpha_2=$ord$_2(n+26)=1.$ This gives
  \begin{align*}
  5^{\gamma_2}-2^{(\alpha_1-1)}3^{\beta_1}=13.
  \end{align*}
Let $\alpha_1 \geq 4.$ Consider the above equation modulus $8.$ We have $5\equiv 5^{\gamma_2}(\text {mod} \ 8)$ implying $\gamma_2$ is odd. On the other hand if we consider modulus $3,$ we have $1\equiv 5^{\gamma_2}(\text {mod} \ 3)$ implying $\gamma_2$ is even. Therefore we can assume that $\alpha_1=3$ and this gives $n=72$ since $n>50.$ Then $n+26=98$ and hence 
$P(n(n+26))=7>5.$ which is not possible. 
  
  Let $\alpha=35.$ By \eqref{condition}, we have $P(n(n+35)) \leq 5$. Then by \eqref{order of p in alpha} and $P(n) \geq 3$, we have 
  $ n= 2^{\alpha_1}\cdot 3$ 
  and $ n+35= 2^{\alpha_2}.$ Then $\alpha_1 =0$ and $n=3$ which contradicts $ n>50.$
 
 Let $\alpha=44.$ Then by \eqref{condition}, $P(n) \geq 3$ and by \eqref{order of p in alpha}, we have  $
   n= 2^{\alpha_1}\cdot 3$ with $\alpha_1 \geq 5 $ since $n> 50.$ Further
   $ n+44= 2^{\alpha_2}5^{\gamma_2}7^{\delta_2}$ with $\alpha_2=2.$ Then we have
     \begin{align*}
     11=5^{\gamma_2}7^{\delta_2}-2^{\alpha_1-2}\cdot 3
     \end{align*}
     We check the solutions of this equation by Lemma \ref{weger} and we get $n=96$ since $n >50$. For $(n,\alpha)=(96,44),$ we apply Lemma \ref{fil} with $p=7$ to conclude that \ps \ has no linear factor.
     
      Let $\alpha=48.$ Then by \eqref{condition}, \eqref{order of p in alpha} and $P(n) \geq 3$, we have  $n= 2^{\alpha_1}\cdot 7$ and 
      $n+48= 2^{\alpha_2}5^{\gamma_2}.$ If $\alpha_1=3,$ then $n=56$, $n+48=104$ and $13|(n+48)$ and by \eqref{condition}, this is not possible. If $\alpha_1=4,$ then $n=112$ and $(n,\alpha)=(112,48) \in \Omega$. If $\alpha_1=5,$ then $n=224,n+48=272$ and $17|(n+48)$ and this is not possible by 
      \eqref{condition}. Thus $\alpha_1 \geq 6.$ Then $\alpha_2=4$ and we have 
            \begin{align*}
            3=5^{\gamma_2}-2^{\alpha_1-4}\cdot 7.
            \end{align*}
            Taking congruent modulo $4,$ we conclude that the above equation has no solution.

      Let $ \alpha=49.$ By \eqref{condition}, $P(n) \geq 3$, $n >50$ and \eqref{order of p in alpha}, we have $
       n= 2^{\alpha_1}\cdot 5$ with $\alpha_1 \geq 4$. Thus
        $n+49= 3^{\beta_2}$ and we have 
        \begin{align*}
        49=3^{\beta_2}-2^{\alpha_1}\cdot 5.
        \end{align*}
       By considering the above equation modulo 8, we get $\beta_2$ even. Then
        $$(3^{\beta_2/2} +7)(3^{\beta_2/2}-7) = 2^{\alpha_1}\cdot 5.$$         
     This implies $ 3^{\beta_2/2}-7\in \{2,  4, 10, 20\}$ which does not give solution to the above equation.
  \end{proof} 

\subsection*{Proof of Theorem \ref{Thm1}:} Let $11\leq \alpha \leq 50$ and \ps \ has a factor of degree $1 \leq k \leq \frac{n}{2}.$ By Lemma \ref{k >1}, we may assume that $k=1.$ Let $(n,\alpha) \notin \Omega.$ By Lemmas \ref{computational} and \ref{P(n) >3}, we may assume that $ n >50$ and $P(n)=2.$ Then $n=2^r > \alpha.$ Consider $\g$. The leading coefficient of $\g$ is $ 1$ and its constant term is $(n+\alpha)(n+\alpha-1)\cdots (1+\alpha).$ We apply Lemma \ref{order} to see that
  \begin{align*}
  \nu_2((n+\alpha)(n+\alpha-1)\cdots(1+\alpha))&=\nu_2((n+\alpha)!)-\nu_2(\alpha!)\\&=((n+\alpha)-\sigma(n+\alpha))-(\alpha-\sigma(\alpha))=n-1.
   \end{align*}
    The coefficient of $x^j$ in $\g$ is $\binom{n}{j}(n+\alpha)(n+\alpha-1)\cdots(j+1+\alpha)=\frac{n!}{j!}\binom{n+\alpha}{j+\alpha}$ and 
    \begin{align*}
    \nu_2\left(\frac{n!}{j!}\binom{n+\alpha}{j+\alpha}\right)\geq \nu_2 \left(\frac{n!}{j!}\right)&=\nu_2(n!)-\nu_2(j!)=(n-1)-(j-\sigma(j))\\
    & \geq (n-1)-(j-1)=n-j \ \ \text{for} \ 1 \le j \le n-1.
    \end{align*}
    This implies that $NP_2(\g)$ is the edge joining $(0,0)$ and $(n,n-1)$. Therefore $NP_2(\g)=NP_2(\psi_n^{(\alpha)}(x))$ has only one edge with no 
    lattice point. Thus \ps\ is irreducible and in particular it has no linear factor. Now we apply Lemma \ref{linear factors} to complete the proof of 
    Theorem \ref{Thm1}. \qed

 \section{Proof of Theorem \ref{Thm2}}
 The proof depends on the following result which is analogous to a result for $G_{\alpha}(x)=G_n^{(\alpha)}(x)$ as proved in \cite[Lemma 5.2]{LaNaSh15}. The proof of Lemma \ref{irred} is exactly the same as \cite[Lemma 5.2]{LaNaSh15}.

 \begin{lemma}\label{irred}
 Let $\alpha=u+\frac{1}{2}$, $1\le k\le \frac{n}{2}$ and $a_0,a_1,\dots, a_n \in \mathbb{Z}.$ Suppose there is a prime $p$ with
 \begin{align*}
 p|\displaystyle\prod_{l=0}^{k-1}(1+2u+2(n-l))(n-l), \ \ p\nmid\displaystyle\prod_{l=1}^{k}(1+2u+2l)
 \end{align*}
 satisfying
 \begin{align*}
 p>\max(2k,1+\sqrt{2(u+1)})\  and \ p\nmid a_0a_n.
 \end{align*}
 Then $\psi_n^{(\alpha)}(x^2)$ does not have a factor of degree in
 $\{2k-1, 2k\}$. Further when $n$ is odd and $k=\frac{n-1}{2},\ \psi_n^{(\alpha)}(x^2)$ does not have a factor of degree $n=2k+1.$
 \end{lemma}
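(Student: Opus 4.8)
The plan is to mimic the argument for $G_n^{(\alpha)}(x)$ in \cite[Lemma 5.2]{LaNaSh15}, passing through the Newton polygon of $\psi_n^{(\alpha)}(x)$ with respect to the given prime $p$ and then lifting the conclusion to $\psi_n^{(\alpha)}(x^2)$. First I would record the coefficients: writing $\psi_n^{(\alpha)}(x)=\sum_{j=0}^n a_j b_j x^j$ with
\[
b_j=\binom{n}{j}\prod_{i=j+1}^{n}\bigl(1+2(u+i)\bigr),
\]
I want to show that $p\mid b_j$ for $0\le j\le n-k$ while $p\nmid b_{n-k+1}\cdots b_n$ (and in particular $p\nmid b_n$), so that, since $p\nmid a_0a_n$, the edges of $NP_p(\psi_n^{(\alpha)}(x))$ in the $x^j$-range $j\le n-k$ all lie strictly above the horizontal axis. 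The divisibility $p\mid b_j$ for $j\le n-k$ comes from the hypothesis that $p$ divides one of the factors $(1+2u+2(n-l))$ or $(n-l)$ for some $0\le l\le k-1$: if $p\mid (1+2u+2(n-l))$ then this factor appears in the product defining $b_j$ as long as $j+1\le n-l$, i.e.\ $j\le n-l-1\le n-1$; if instead $p\mid(n-l)$ then $p\mid\binom{n}{j}$ for $j\le n-k$ by the usual argument (a run of consecutive integers in the numerator of $\binom{n}{j}$ that contains a multiple of $p$ but whose length $n-j\ge k$ is less than... here one uses $p>2k>k$ so $p\nmid(n-j)!$). The condition $p\nmid\prod_{l=1}^k(1+2u+2l)$ is exactly what guarantees $p$ does not divide the tail factors, so $p\nmid b_n$ and the rightmost stretch of the polygon behaves as needed.

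Next I would bound the slopes. The key quantitative input is that the Newton polygon of the auxiliary polynomial $G_n^{(\alpha)}(x)=\sum_j b_j' x^j$ (with $b_j'=\prod_{i=j+1}^n(1+2(u+i))$, i.e.\ $b_j$ with the binomial stripped) has rightmost edge of slope $<\tfrac1{2k}$; this is where the hypothesis $p>1+\sqrt{2(u+1)}$, equivalently $p^2-p>2(u+1)$, so that $p$ divides at most one of the factors $1+2(u+i)$ and does so to the first power, enters — it forces $\nu_p$ of any partial product of the $(1+2(u+i))$ to be at most $1$, and the spacing of multiples of $p$ among the $1+2(u+i)$ (there are $n$ of them, and $p>2k$) keeps the average slope of the last $k$ coefficients below $1/(2k)$. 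Folding the binomial factor back in only increases the $\nu_p$-values in the interior, which can only push the lower convex hull up, so the rightmost edge of $NP_p(\psi_n^{(\alpha)}(x))$ still has slope $<\tfrac1{2k}$. Now I apply Lemma~\ref{fil} (with its $l=k-1$, $k$ there equal to $2k$... — more precisely, the analogue for $\psi_n^{(\alpha)}(x)$ of Lemma~\ref{fil}, exactly as in \cite{LaNaSh15}) to conclude that $\psi_n^{(\alpha)}(x)$ has no factor of degree $k$; then the standard substitution argument shows $\psi_n^{(\alpha)}(x^2)$ has no factor of degree $2k-1$ or $2k$, and the odd-degree addendum when $k=\tfrac{n-1}2$ follows because a factor of $\psi_n^{(\alpha)}(x^2)$ of degree $2k+1=n$ would force, after the $x\mapsto x^2$ bookkeeping, a factor of $\psi_n^{(\alpha)}(x)$ of degree $k$ or $k+1$, both excluded.

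The main obstacle is the slope estimate in the second paragraph: one must translate the arithmetic condition $p^2-p>2(u+1)$ into a genuine bound on the rightmost slope of $NP_p$, controlling how the multiples of $p$ distribute among the integers $1+2(u+i)$ for $j<i\le n$ and ruling out a second such multiple appearing within the last $k$ positions. Since this is verbatim the computation carried out in \cite[Lemma 5.2]{LaNaSh15} for $G_n^{(\alpha)}(x)$, and the passage from $G_n^{(\alpha)}(x)$ to $\psi_n^{(\alpha)}(x)$ only inserts the binomial coefficients (whose $p$-adic valuations are nonnegative and hence harmless for an upper bound on the rightmost slope), I would simply invoke that lemma's proof and note the two cosmetic changes: the binomial factors, and the final $x\mapsto x^2$ degree translation yielding the statement about $\{2k-1,2k\}$ and about degree $n$.
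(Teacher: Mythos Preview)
Your overall plan --- mimic \cite[Lemma 5.2]{LaNaSh15} via the Newton polygon of $\psi$ with respect to $p$ --- is exactly what the paper does; indeed the paper's proof is nothing more than the sentence ``exactly the same as \cite[Lemma 5.2]{LaNaSh15}.''  The divisibility discussion (that $p\mid b_j$ for $0\le j\le n-k$, using either $p\mid(1+2u+2(n-l))$ or $p\mid(n-l)$ together with $p>2k$) and the observation that inserting the binomial factors can only lower the right-most slope are both fine.

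There is, however, a genuine gap in the step where you pass from $\psi_n^{(\alpha)}(x)$ to $\psi_n^{(\alpha)}(x^2)$.  You first conclude that $\psi_n^{(\alpha)}(x)$ has no factor of degree $k$, and then invoke a ``standard substitution argument'' to deduce that $\psi_n^{(\alpha)}(x^2)$ has no factor of degree $2k-1$ or $2k$.  That implication is false in general: for instance $f(x)=x^2+4$ is irreducible (no factor of degree $1$), yet $f(x^2)=x^4+4=(x^2-2x+2)(x^2+2x+2)$ has a factor of degree $2$.  The same objection applies to your handling of the ``further'' clause: a factor of $\psi_n^{(\alpha)}(x^2)$ of odd degree $n$ need not produce a proper factor of $\psi_n^{(\alpha)}(x)$ at all, since one can have $\psi_n^{(\alpha)}(x^2)=cA(x)A(-x)$ with $A$ irreducible of degree $n$.

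The fix is not to detour through $\psi_n^{(\alpha)}(x)$ but to apply Lemma~\ref{fil} \emph{directly} to $g(x^2)=\sum_j b_j x^{2j}$ of degree $m=2n$.  The Newton polygon $NP_p(g(x^2))$ is the horizontal stretch by $2$ of $NP_p(g(x))$, so its right-most slope is half that of $NP_p(g(x))$; the slope bound $<1/k$ for $g(x)$ (which is what the hypotheses $p>2k$ and $p>1+\sqrt{2(u+1)}$ actually yield, not $<1/(2k)$ as you wrote) becomes $<1/(2k)$ for $g(x^2)$.  Moreover $p$ divides every coefficient of $g(x^2)$ up through index $2(n-k)+1$ (the extra $+1$ coming for free since odd-index coefficients vanish), so Lemma~\ref{fil} with $m=2n$, $l=2k-2$ and ``$k$'' there equal to $2k$ rules out factors of degree in $[2k-1,2k]$ directly.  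This is how \cite[Lemma 5.2]{LaNaSh15} proceeds, and once you rewrite your second paragraph in these terms the argument goes through; the ``further'' clause then follows by the same Newton-polygon reasoning rather than by substitution.
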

 
\subsection*{Proof of Theorem \ref{Thm2}:} Assume that $\psi_n^{(\alpha)}(x^2)$ has a factor of degree $1 \leq l \leq n.$ Recall that $\psi_n^{(\alpha)}(x^2)$ is a special case of $G_n^{(\alpha)}(x^2).$ Let $l \geq 3.$ Then by Lemma \ref{ psi greater than 2 with half}, $(u, n)\in \{(1,12),(6,7),(9, 113),(10,3),(21, 101)\}$  if $l=3,$ or $(u,n) \in S$ if $l=4$ or $(u,n)=(44,79)$ if $l=6$. We apply Lemma \ref{irred} to exclude these possibilities except for $(u,n,l)=(4,9,4)$. Hence we may assume that $l \leq 2.$ Then by Lemma \ref{psi linear half}, we have $(u,n) \in T_0.$ 
 
 Let $(u, n)=(38, 2)$. We may assume that $\psi_n^{(\alpha)}(x^2)=x^4+162ax^2\pm 6399.$ First, we show that $\psi_n^{(\alpha)}(x^2)$ has no linear factor. If 
 not, we get a rational root $r/s$, with $r, s\in \Z$,  $\gcd(r, s)=1, s>0$, of $x^4+162ax^2\pm 6399=x^4+2\cdot 9^2ax^2\pm 9^2\cdot 79$. Hence 
 $r^4+2\cdot 9^2ar^2s^2\pm 9^2\cdot 79s^4=0$ giving $s=1$ and further $r^2|9^2\cdot 79$. Also $9^2|r^4$ and hence  $r^2\in \{3^2, 9^2\}$. We have 
 $r^2\neq 9^2$ else $r^4=9^4|9^2\cdot 79$ which is not possible. Thus $r^2=3^2$ and we obtain $1+18a\pm  79=0$ or 
 $18a\in \{-80, 78\}$ which is not possible. Hence $\psi_n^{(\alpha)}(x^2)$  has no linear factor.  Assume it has an irreducible factor of degree $2$. 
 Then, we can write $x^4+162ax^2\pm 6399= (x^2+A_1x+A_0)(x^2+B_1x+B_0)$ with $A_0,B_,A_1,B_1 \in \mathbb{Z}$. Then 
 $A_0B_0= \pm 6399, A_0B_1+A_1B_0=0=A_1+B_1$ giving $B_1=-A_1$ and hence  $A_0=B_0$ if $A_1\neq 0$.  If $A_1\neq 0$, then $\pm 6399=A_0B_0=A^2_0$  which is not possible. Hence $A_1=0$ giving $B_1=0$ and hence $A_0+B_0=162a$ which together with 
 $A_0B_0= \pm 6399$ gives $(A_0-B_0)^2=(162a)^2 \mp 4\cdot 6399=18^2 \{(9a)^2\mp 79\}$. This  imply $ \pm 79=(9a)^2-y^2=(9a-y)(9a+y)$ for 
 some $y>1$. Then $9a-y=\pm 1$ 
 and $9a+y=\pm 79$ giving $9a=(\pm 1\pm 79)/2$ which is not possible. Thus 
 $\psi_n^{(\alpha)}(x^2)$ is irreducible at $(u, n)=(38, 2)$.  
 
 Let $(u,n) \in T_0-\{(38,2)\}.$ For all these pairs $(u,n)$ we apply Lemma \ref{fil} with suitable primes to conclude that $\psi_n^{(\alpha)}(x^2)$ does not have factor in degree $1.$ Hence we may assume that $\psi_n^{(\alpha)}(x^2)$ has a factor of degree $2$ for all $(u,n) \in T_0-\{(38,2)\}.$ Let $(u,n) \in \{(35,2),(36,2^6)\}.$ We apply Lemma \ref{fil} with $p=3$ for $(u,n)=(35,2)$ and $p=67$ for $(u,n)=(36,2^6)$ to conclude that  $\psi_n^{(\alpha)}(x^2)$ does not have factor in degree $2.$ For $(u,n)=(11,2),$ we find that vertices of $NP_3(g_n^{(\alpha)}(x^2))$ are given by $\{(0,0),(4,3)\}$ and therefore $NP_3(g(x^2))$ is same as $NP_3(\psi_n^{(\alpha)}(x^2)).$ Hence by Lemma \ref{Dumas}, $\psi_n^{(\alpha)}(x^2)$ does not have factor of degree $2$ when $(u,n)=(11,2).$ For all other pairs $(u,n),$ we can always find integers $a_0,a_1,\ldots,a_n$ with $|a_0|=|a_n|=1$ such that  $\psi_n^{(\alpha)}(x^2)$ has a quadratic factor except for $(u,n) =(44,2^{12})$ (see Table 2) by the method described in Lemma \ref{linear factors}. 
 \qed
 
 \subsection*{Proof of Corollary \ref{coro 2}:}  Let $\alpha=u+\frac{1}{2}$ where $u$ is an integer. Suppose $\psi_n^{(\alpha)}(x)$ has a factor of degree $k$. Then  $\psi_n^{(\alpha)}(x^2)$ has a factor of degree $2k.$ Therefore by Theorem \ref{Thm2}, we have $(u,n) \in \Omega_1, k=1$ and the assertion follows from Theorem \ref{Thm2} immediately.
 \qed

\section{Galois Groups: Proof of Theorem \ref{GGLu}}

We will use a result of Hajir \cite{hajir} which gives a criterion for an irreducible polynomial to have large Galois group using Newton polygons. We restate the result which is \cite[Lemma 3.1]{hajir}.

\begin{lemma} \label{An}
Let $f(x)=\sum^m_{j=0}\binom{m}{j}c_jx^j\in \Q[X]$ be an irreducible polynomial of degree $m$. Let $p$ be a
prime with $\frac{m}{2}< p<m-2$ such that
\begin{itemize}
\item[$(i)$] $ord_p(c_j) \geq 0 $ for $j=0,1,\ldots,m,$
\item[$(ii)$] $ord_p(c_0)=1$,
\item[$(iii)$] $ord_p(c_j)\ge 1$ for $0\le j\le m-p$,
\item[$(iv)$] $ord_p(c_p)=0$.
\end{itemize}
Then the Galois group of $f$ contains $A_m.$ Further Galois group is
$A_m$ if disc$(f)\in \Q^{*2}$ and $S_m$ otherwise.
\end{lemma}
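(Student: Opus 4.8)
The plan is to show that $G=\mathrm{Gal}(f/\Q)$, viewed as a transitive subgroup of $S_m$ (transitive since $f$ is irreducible over $\Q$), contains a $p$-cycle, and then to invoke a classical theorem of Jordan: a \emph{primitive} permutation group of degree $m$ containing a $p$-cycle for a prime $p\le m-3$ must contain $A_m$. The $p$-cycle is produced by a local analysis at $p$. I would read off from the $p$-adic Newton polygon of $f$ that over $\Q_p$ the polynomial $f$ acquires a single totally (and wildly) ramified irreducible factor of degree exactly $p$, while every other $\Q_p$-factor has degree $<p$ and is therefore tamely ramified. The wild inertia at a prime of the splitting field above $p$ then acts as a $p$-cycle on the $p$ roots of the ramified factor and trivially on all the others.

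First I would pin down $NP_p(f)$ using the coefficients $a_j=\binom{m}{j}c_j$. By Legendre's formula (Lemma \ref{order}) one has $\nu_p\binom{m}{j}=\frac{\sigma_p(j)+\sigma_p(m-j)-\sigma_p(m)}{p-1}$. Since $p>\frac m2$ and $p<m-2$ force $p<m<2p$, the base-$p$ digits of $m$ are $(1,m-p)$, and a short computation gives $\nu_p\binom{m}{j}=1$ for every $j$ with $m-p<j<p$ and $\nu_p\binom{m}{p}=0$. Combined with (i)--(iv) this yields $\nu_p(a_0)=1$, $\nu_p(a_p)=0$, $\nu_p(a_j)\ge1$ for $0\le j\le m-p$ and for $m-p<j<p$, and $\nu_p(a_j)\ge0$ throughout. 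Hence the lower convex hull of the points $(j,\nu_p(a_j))$ has as its leftmost edge exactly the segment from $(0,1)$ to $(p,0)$, of slope $-\frac1p$ and horizontal length $p$ with no interior lattice point, while all further edges have nonnegative slope. Translating slopes into $p$-adic valuations of roots, this first edge provides $p$ roots of valuation $\frac1p$; because $-\frac1p$ is already in lowest terms, these roots constitute a single irreducible factor over $\Q_p$ of degree $p$, totally ramified with ramification index $e=p$ (so wildly ramified, as $p$ is the residue characteristic). Every remaining factor has degree at most $m-p<p$, hence ramification index $<p$, so it is tamely ramified.

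Next I would extract the $p$-cycle. Fix a prime $\mathfrak P$ of the splitting field $L$ above $p$, with decomposition group $D$, inertia $I$, and wild inertia $P$ (the normal Sylow-$p$ subgroup of $I$). The orbits of $D$ on the $m$ roots are exactly the $\Q_p$-irreducible factors. On each tame factor the image of $P$ is trivial, so $P$ fixes those $m-p$ roots. On the degree-$p$ totally ramified factor, $P$ maps onto the wild inertia of that local extension; since the corresponding local Galois group is transitive of prime degree $p$ and a nontrivial normal $p$-subgroup of a transitive group on $p$ points must itself be transitive, this wild inertia is $C_p$ acting as a $p$-cycle. Choosing $\sigma\in P$ whose image generates this $C_p$, I obtain an element of $G$ that is a $p$-cycle on the $p$ ramified roots and the identity on the other $m-p$ roots; thus $\sigma$ is a genuine $p$-cycle in $G\le S_m$.

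Finally I would run the group theory. Since $p>\frac m2$ and $\sigma$ fixes the $m-p\ge3$ remaining points, a counting argument on any hypothetical block system shows $G$ is primitive: a nontrivial $\sigma$-invariant block meeting the support of $\sigma$ would have to contain all of it, forcing a block of size $>\frac m2$ and hence a single block; otherwise the blocks moved by $\sigma$ form one orbit of $p$ blocks each meeting the support once, giving $\ge p$ blocks of size $\le m/p<2$. Both contradict $1<d<m$, so $G$ is primitive. With $G$ primitive and containing a $p$-cycle for $p\le m-3$, Jordan's theorem gives $A_m\subseteq G$, so $G$ is $A_m$ or $S_m$; the standard discriminant criterion ($G\subseteq A_m$ iff $\mathrm{disc}(f)\in\Q^{*2}$) separates the two cases. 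The main obstacle is the local bookkeeping of the middle two paragraphs: verifying that the leftmost edge is exactly $(0,1)$--$(p,0)$ (which hinges on the binomial valuation $\nu_p\binom mj=1$ for $m-p<j<p$) and that the wild inertia produces a \emph{clean} $p$-cycle fixing every root outside the ramified degree-$p$ factor.
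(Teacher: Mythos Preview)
Your argument is correct. The Newton polygon computation is right: since $p<m<2p$, Kummer's theorem gives $\nu_p\binom{m}{j}=1$ precisely for $m-p<j<p$ and $\nu_p\binom{m}{j}=0$ for $j\in\{0,\ldots,m-p\}\cup\{p,\ldots,m\}$, so hypotheses (i)--(iv) force the lower convex hull to have the single edge $(0,1)$--$(p,0)$ on the left, with all remaining edges of nonnegative slope. The passage from this to a $p$-cycle via wild inertia is also sound: every tamely ramified subfield of $L_{\mathfrak P}$ lies in $L_{\mathfrak P}^{P}$, so $P$ fixes all roots of the small factors, while on the $p$ roots of the totally ramified factor a nontrivial $p$-group must act transitively, hence as $C_p$. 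Your primitivity argument and the appeal to Jordan (using $p\le m-3$) are fine.

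As for comparison with the paper: there is nothing to compare. The paper does not prove this lemma at all; it merely restates \cite[Lemma~3.1]{hajir} and cites Hajir for the proof. What you have written is essentially Hajir's original argument (Newton polygon over $\Q_p$ producing a single totally ramified degree-$p$ factor, hence a $p$-cycle in the inertia, then Jordan's theorem on primitive groups), so your route is the expected one rather than a genuinely different approach.
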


 We shall always assume that $\alpha=u+\frac{1}{2}$ in this section where $u$ is an integer $\geq 1$. We define 
 \begin{align*}
 \mathcal{L}_n^{(u)}(x)=\sum^n_{j=0}\binom{n}{j}(1+2(u+n))(1+2(u+n-1))\cdots (1+2(u+j+1))x^j.
 \end{align*} We observe that $ \mathcal{L}_n^{(u)}(2x)=2^nn!L_n^{(\alpha)}(-x)$ and thus the Galois group of $\mathcal{L}_n^{(u)}(x)$ and $L_n^{(\alpha)}(x)$ are same.

We shall be applying the above lemma with $f(x)=\mathcal{L}_n^{(u)}(x)$. In \cite{Sch31}, Schur showed that the discriminant of $\mathcal{L}_n^{(u)}(x)$ is given by
\begin{align*}
D^{(u)}_n:=Disc(\mathcal{L}_n^{(u)}(x))=\prod^n_{j=2}j^j(\frac{2u+1}{2}+j)^{j-1}.
\end{align*}
We write $D^{(u)}_n=bY^2$, $Y\in \Q$ with
\begin{align}\label{b}
b=\begin{cases}
\frac{3\cdot 5\cdots n\cdot (2u+1+4)(2u+1+8)\cdots (2u+1+2(n-1))}{2^\del}& {\rm if} \ n\equiv 1, 3({\rm mod} \ 4)\\
\frac{3\cdot 5\cdots (n-1)\cdot (2u+1+4)(2u+1+8)\cdots (2u+1+2n)}{2^\del}& {\rm if} \ n\equiv 0, 2({\rm mod} \ 4)
\end{cases}
\end{align}
where $\del=0$ if $n\equiv 0, 1($mod $4)$ and $1$ if $n\equiv 2, 3($mod $4)$. Observe that $b$ is never a square when $n\equiv 2, 3($mod $4)$. In the next lemma, we find all pairs $(u, n)$ such that $b$ is a square. 
\begin{lemma}\label{bsq}
Let $u\leq \max(45, \frac{4n}{3})$. The pairs $(u, n)$ for which $b$ given by \eqref{b} is a square are 
$(u, n)=(u,1)$ where $1 \leq u \leq 45$ in which cases $b=1$.  
\end{lemma}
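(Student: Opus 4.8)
The plan is to produce, for every admissible $(u,n)$ with $n\ge 2$, a prime $p$ with $\nu_p(b)$ odd; this forces $b\notin(\Q^*)^2$, whereas for $n=1$ the product defining $D^{(u)}_n$ is empty, so $b=1$ is a square and the stated list is complete. Two quick reductions come first. If $n\equiv 2,3\pmod4$ then $\del=1$ in \eqref{b}, and the numerator displayed there is a product of odd integers, hence odd; so $b$ has denominator exactly $2$ in lowest terms and cannot be a square — this is the observation already recorded above. Hence we may assume $n\equiv 0,1\pmod4$, so that $n\ge4$, $\del=0$, and $b$ is the integer $P_1P_2$ where $P_1=\prod_{3\le j\le n,\ j\text{ odd}}j$ and $P_2=\prod_{k=1}^{K}(2u+1+4k)$ with $K=\lfloor n/2\rfloor$; every factor of $P_2$ is $\equiv 2u+1\pmod4$ and is at most $2u+2n+1$.

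The heart of the matter concerns a prime $p$ with $n/2<p\le n$: it is the unique odd multiple of $p$ not exceeding $n$, so $\nu_p(P_1)=1$, and since $K<p$ and $\gcd(4,p)=1$ it divides at most one factor of $P_2$. Thus $\nu_p(b)$ is odd as soon as $p\nmid P_2$, and to force this I would take $p$ in the window $\bigl((2u+2n+1)/3,\,n\bigr]$ with $p\not\equiv 2u+1\pmod4$: a factor of $P_2$ divisible by such a $p$ would equal $mp$ with $m$ odd (both are odd) and $m\ne1$ (the case $m=1$ gives $p\equiv 2u+1$, excluded), hence be $\ge 3p>2u+2n+1$, which is impossible. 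Lemma \ref{prime cong}, with $r$ the residue of $2u+3$ modulo $4$, supplies such a prime once $1.048\cdot\frac{2u+2n+1}{3}\le n$ and $\frac{2u+2n+1}{3}\ge 887$ — roughly $u\le 0.43\,n$ with $n$ beyond an explicit bound — which settles in particular all $u\le45$ once $n$ is large.

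Two pockets of cases remain: (a) the finitely many pairs with $u\le45$ left uncovered, and (b) the range where $u$ is of the same order as $\tfrac{4n}{3}$. For (a) I would compute the squarefree part of $b=P_1P_2$ directly, using the tables of $S_M$ in \cite{Leh64},\cite{Naj},\cite{Naj1} to reduce to the few pairs for which $P_1P_2$ is smooth enough for a square to be conceivable. For (b), where now every factor of $P_2$ exceeds $n$, one instead looks for a prime $p>n$ dividing a single factor of $P_2$ to an odd power: it then misses $P_1$ entirely and, as $p>n>K$, meets only that one factor, so $\nu_p(b)$ is odd. Such a $p$ is located from the fact that the factors of $P_2$ form an arithmetic progression of at most $n/2$ terms lying in $[2u+5,\,2u+2n+1]$, together with the bound $u\le\tfrac{4n}{3}$ and a Bertrand-type estimate for primes in that progression. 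The main obstacle throughout is exactly this coupling with $P_2$: a prime harvested from $P_1$ may be swallowed by $P_2$ and contribute an even total exponent — for instance $p=7$ when $(u,n)=(1,8)$, where it is the prime $19\mid P_2$ that keeps $b$ off the squares — so the choice of $p$ must be synchronized both with the class of $2u+1$ modulo $4$ and with the size of $u$ relative to $n$.
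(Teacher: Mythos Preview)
Your strategy differs from the paper's in a basic way: you look primarily for a prime $p$ appearing in $P_1$ but not in $P_2$, whereas the paper looks for a prime appearing in $P_2$ but not in $P_1$. Concretely, the paper applies Lemma~\ref{prime cong} directly to the interval $\bigl(\max(n-1+\eta,\,2u+4),\,1+2(u+n-\eta)\bigr]$ to locate a prime $p\equiv 2u+1\pmod 4$; such a $p$ is literally one of the factors $2u+1+4k$ of $P_2$, exceeds $n-1+\eta$ so it misses $P_1$, and satisfies $2p>1+2(u+n-\eta)$ so no other factor of $P_2$ is divisible by it. This single argument covers the entire range $u\le\max(45,\tfrac{4n}{3})$ as soon as $1+2(u+n-\eta)$ is large enough, leaving only a bounded finite check.

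Your main argument, by contrast, requires roughly $u\le 0.43\,n$ in order that the interval $\bigl((2u+2n+1)/3,\,n\bigr]$ be long enough for Lemma~\ref{prime cong} to apply. This is fine for $u\le 45$ with $n$ large, but it forces you to treat the remaining band $0.43\,n\lesssim u\le \tfrac{4n}{3}$ separately as your case~(b), and that case is where the genuine gap lies. You assert that in (b) ``every factor of $P_2$ exceeds $n$'', but this requires $2u+5>n$, i.e.\ $u>(n-5)/2$; the sub-range $0.43\,n\lesssim u\le (n-5)/2$ is not covered by this claim. More seriously, even where the claim holds, you do not actually produce the prime $p>n$ dividing a single factor of $P_2$ to the first power: you invoke an unspecified ``Bertrand-type estimate for primes in that progression'' without saying which estimate or verifying the numerical constants. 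What is needed here is precisely the paper's idea --- pick $p$ to be a prime factor of $P_2$ in the correct residue class modulo $4$, located via Lemma~\ref{prime cong} --- and once you supply that, your bifurcation into small and large $u$ becomes unnecessary: the paper's single argument already handles both regimes at once and leaves a much smaller finite residue to check. Your reference to the tables of $S_M$ in case~(a) is also off target; those tables concern smooth values of $n(n+1)$ and are not what is needed here.
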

\begin{proof}
We may assume that $n\equiv 0, 1($mod $4)$. Let $\eta=0, 1$ according as 
$n\equiv 0, 1$ modulo $4$, respectively. Let $x=\frac{1+2(u+n-\eta)}{1.048}$ and $x \geq 887.$ Then $n >198$ since $u \leq \max(45,\frac{4n}{3}).$ Further $x > \max(n-1+\eta, 2u+4).$ Therefore by Lemma \ref{prime cong}, the interval  $(\max(n-1+\eta, 2u+4), 1+2(u+n-\eta)]$ 
contains  a prime $p$ congruent to $1+2u$ modulo $4$. Further $2p >1+2(u+n-\eta)$ since $p>x.$ Hence $b$ is not a square. Therefore we may now suppose that 
$x=(1+2(u+n-\eta))/1.048<887$ or $1+2(u+n-\eta)\leq 929$. 

We have $8(u+n)/7\geq 2u$ if $u\leq 4n/3$. Hence taking $m=1+2(u+n-\eta)$, we get 
that $4m/7+5\geq 2u+5-4/7>2u+4$ if $u\leq 4n/3$. Also $4m/7+5>n$. For $m\geq 158$, 
we also have $4m/7\geq 2\cdot 45$. Hence for $158\leq m\leq 929$, we check that 
the interval $(4m/7+5, m]$ contain both primes congruent to $1$ and $3$ modulo 
$4$. Thus we may suppose that $m=1+2(u+n-\eta)\leq 157$. 

Let $2u+4\leq n$. Then $\max(2u+4,n)=n<m/2$. For $7\leq m\leq 157$ and $m$ odd, we check 
that the interval $(m/2, m]$ contain both primes congruent to $1$ and $3$ modulo $4$ 
except for $m=11$. Hence for $1+2(u+n-\eta)\leq 157$, $b$ is not a square except when 
$1+2(u+n-\eta)= 11$ or $1+2(u+n)\leq 6$. These cases can be excluded since $n \geq 2u+4$.

Let $n\leq 2u+3$. Then $3n-2-2\eta \leq  1+2(u+n-\eta)\leq 157$. Thus $n\leq 53$. We check that for 
primes $\leq 157$, gaps between consecutive primes in the same residue modulo $4$ is at 
most 24. Hence from \eqref{b}, we obtain that $b$ is not square if $2(n-1)\geq 24$ or 
$n\geq 13$. Thus we may suppose that $n\leq 12$. Then $n\in \{1, 4, 5, 8, 9, 12\}$.  For 
these values of $n$, we have $u\leq \max(45, 4n/3)=45$ and we check that $b$ is not a square unless $(u,n)=(u,1)$ where $1 \leq u \leq 45$. Hence the assertion.    
\end{proof}

\begin{lemma}\label{pr}
Let $n>1, u \leq \max( 45, \frac{4n}{3})$ and $\mathcal{L}_n^{(u)}(x)$ be an irreducible polynomial. Suppose there exists a 
prime $p$ with $\frac{n}{2}< p<n-2$ such that 
\begin{align}\label{p||}
p||\prod^p_{l=n-p+1}(1+2(u+l)), 
\end{align}
then the Galois group of $\mathcal{L}_n^{(u)}(x)$ is $S_n$. 
\end{lemma}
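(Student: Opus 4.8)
The plan is to apply Hajir's criterion (Lemma \ref{An}) to the polynomial $f(x)=\mathcal{L}_n^{(u)}(x)$ with $m=n$ and the prime $p$ furnished by the hypothesis \eqref{p||}. First I would record that the coefficient of $x^j$ in $\mathcal{L}_n^{(u)}(x)$ is $\binom{n}{j}c_j$ with
$$
c_j=(1+2(u+n))(1+2(u+n-1))\cdots(1+2(u+j+1))=\prod_{l=j+1}^{n}(1+2(u+l)),
$$
so that $c_n=1$ and $c_0=\prod_{l=1}^{n}(1+2(u+l))$. Since every $1+2(u+l)$ is a positive integer, condition (i) of Lemma \ref{An} is immediate: $\mathrm{ord}_p(c_j)\ge 0$ for all $j$.

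Next I would verify conditions (ii), (iii), (iv) using \eqref{p||}. The hypothesis says $p$ exactly divides the product $\prod_{l=n-p+1}^{n}(1+2(u+l))$ of $p$ consecutive terms of the arithmetic progression $1+2(u+l)$; since these $p$ terms are $p$ consecutive integers in the residue class of odd numbers $\dots$ wait, more precisely they run over a complete residue system modulo $p$ when reduced (the common difference $2$ is coprime to the odd prime $p$), so exactly one of them is divisible by $p$, and $p^2$ does not divide it — this is exactly what $p\|$ encodes. Now $c_p=\prod_{l=p+1}^{n}(1+2(u+l))$; I must check $p\nmid c_p$. The terms of $c_0$ not appearing in $c_p$ are $\prod_{l=1}^{p}(1+2(u+l))$, and the terms of the product in \eqref{p||} not appearing in $c_p$ are those with $n-p+1\le l\le p$ (using $p<n-2$, so $n-p+1\le p$ fails only if $2p<n+1$, i.e.\ it holds precisely because $p>n/2$). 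The key observation is that the index set $\{n-p+1,\dots,n\}$ for the product in \eqref{p||} consists of $p$ consecutive integers, hence contains exactly one multiple-of-$p$ value of $1+2(u+l)$; since $n>p$, the indices $l=p+1,\dots,n$ — which form $c_p$ — all lie in a stretch of fewer than $p$ consecutive integers shifted so as to avoid that unique $p$-divisible term, OR contain it. Here is the clean way: because $p>n/2$ we have $n-p<p$, so $\{p+1,\dots,n\}\subseteq\{n-p+1,\dots,n\}$ has size $n-p<p$; the single $p$-divisible term of the length-$p$ block $\{n-p+1,\dots,n\}$ may or may not sit in $\{p+1,\dots,n\}$. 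If it does, then $\mathrm{ord}_p(c_p)=1$, contradicting nothing yet — so I would instead argue that $c_0=c_p\cdot\prod_{l=1}^{p}(1+2(u+l))$ and the block $\{1,\dots,p\}$ also has length $p$ and so contains exactly one $p$-divisible term; comparing, $\mathrm{ord}_p(c_0)=\mathrm{ord}_p(c_p)+\mathrm{ord}_p\!\big(\prod_{l=1}^p(1+2(u+l))\big)$, and the unique $p$-divisible term among $l=n-p+1,\dots,n$ (with exponent exactly $1$) must coincide with the unique one among $l=1,\dots,n$ that contributes, forcing it to lie in exactly one of the two complementary index sets $\{1,\dots,p\}$ or $\{p+1,\dots,n\}$. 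Since the indices $\le p$ and the indices $\ge n-p+1$ overlap precisely in $\{n-p+1,\dots,p\}$ and a multiple of $p$ among $1+2(u+l)$ repeats with period $p$ in $l$, the $p$-divisible term with index in $[n-p+1,n]$ has index $\equiv$ (some fixed residue) $\bmod p$; reducing mod $p$ we see it equals the unique index in $[1,p]$ with that residue, hence lies in $\{1,\dots,p\}$, giving $\mathrm{ord}_p(c_0)=1$, $\mathrm{ord}_p(c_p)=0$. This yields (ii) and (iv). For (iii), any $j$ with $0\le j\le n-p$ has $c_j=\prod_{l=j+1}^{n}(1+2(u+l))$ a product of at least $p$ consecutive terms of the progression, hence divisible by $p$, giving $\mathrm{ord}_p(c_j)\ge 1$.

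With (i)--(iv) in place, Lemma \ref{An} gives that the Galois group of $\mathcal{L}_n^{(u)}(x)$ contains $A_n$, and equals $A_n$ or $S_n$ according to whether $\mathrm{disc}(\mathcal{L}_n^{(u)}(x))$ is a rational square. Finally I would invoke Lemma \ref{bsq}: writing $D_n^{(u)}=bY^2$ as in \eqref{b}, we have $n>1$, so $b$ is a square only for the excluded pairs $(u,1)$, which are ruled out by $n>1$; hence $b$ is not a square, the discriminant is not a square, and the Galois group is $S_n$. The one genuinely delicate point — the step I expect to be the main obstacle to write cleanly — is the combinatorial/mod-$p$ bookkeeping in the previous paragraph showing that the unique $p$-divisible factor falls in the "$c_0$-only" part and not in $c_p$, i.e.\ translating the raw divisibility hypothesis \eqref{p||} into the separate statements $\mathrm{ord}_p(c_0)=1$ and $\mathrm{ord}_p(c_p)=0$; everything else is formal. (It is conceivable the authors simply take \eqref{p||} to already assert $p\|c_0$ and $p\nmid c_p$ in disguise, in which case this obstacle evaporates and the proof is a direct citation of Lemmas \ref{An} and \ref{bsq}.)
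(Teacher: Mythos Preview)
Your overall strategy—verifying the hypotheses of Lemma~\ref{An} for $f=\mathcal{L}_n^{(u)}$ and then invoking Lemma~\ref{bsq} to rule out the $A_n$ case—is exactly the paper's approach. The gap you yourself flag, however, is real, and its source is a misreading of \eqref{p||}: the product there runs over $l=n-p+1,\ldots,p$, not $l=n-p+1,\ldots,n$. That is, the hypothesis concerns only the $2p-n$ terms with index in the \emph{overlap} $[n-p+1,p]$ of the two length-$p$ blocks $[1,p]$ and $[n-p+1,n]$—precisely the set you were drawn to.

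With the correct reading the argument is clean, and this is what the paper does. Since $2p-n<p$, the hypothesis $p||\prod_{l=n-p+1}^{p}(1+2(u+l))$ forces exactly one $l_p\in[n-p+1,p]$ with $p||(1+2(u+l_p))$. Because $l_p-p\le 0$ and $l_p+p>n$, this $l_p$ is the \emph{only} index in $\{1,\ldots,n\}$ at which $p$ divides $1+2(u+l)$; hence $\mathrm{ord}_p(c_0)=1$, and since $l_p\le p$ we have $l_p\notin\{p+1,\ldots,n\}$, so $p\nmid c_p$. Under your reading (upper limit $n$), the single $p$-divisible index could perfectly well lie in $[p+1,n]$; then $p\mid c_p$, and a second $p$-divisible index $l_p-p\in[1,n-p]$ appears with uncontrolled $p$-adic valuation, so neither (ii) nor (iv) follows. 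Your sentence ``reducing mod $p$ we see it equals the unique index in $[1,p]$ with that residue, hence lies in $\{1,\ldots,p\}$'' conflates congruence mod $p$ with equality and is false in that case.
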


\begin{proof}
We apply Lemma \ref{An} with $f(x)=\mathcal{L}_n^{(u)}(x).$ Then
\begin{align*}
c_j=(1+2(u+n))(1+2(u+n-1))\cdots (1+2(u+j+1)).
\end{align*}
Since $n/2<p<n-2$, there are at most 2 terms in 
\begin{align*}
1+2(u+1), \ldots, 1+2(u+n)
\end{align*}
divisible by $p$. By \eqref{p||} and $2p-n<p$, there is exactly one $l_p$ 
with $n-p+1\leq l_p\leq p$ and $p||(1+2(u+l_p))$. This together with 
$l_p-p\leq 0$ and $l_p+p>n$ implies $1+2(u+l_p)$ is the only term exactly divisible by $p$ in $1+2(u+1), \ldots, 1+2(u+n)$. Hence $p\nmid c_p$ since 
$l_p\leq p$. Further for $0\leq j\leq n-p$, we have 
ord$_p(c_j)=$ord$_p(1+2(u+l_p))=1$. Hence all the assumptions in Lemma \ref{An} are satisfied. Finally we apply Lemmas \ref{An}, \ref{bsq} and $n >1$ to get the assertion of Lemma \ref{pr}. 
\end{proof}

\begin{lemma}\label{GGu}
Let $u\leq \max(45, \frac{4n}{3})$ and $\alpha=u+\frac{1}{2}$. Suppose $\mathcal{L}_n^{(u)}(x)$ be irreducible. 
Then the Galois group of $\mathcal{L}_n^{(u)}(x)$ is $S_n.$
\end{lemma}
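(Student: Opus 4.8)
The plan is to reduce the statement to Lemma \ref{pr}, so the only work is to produce, for every relevant pair $(u,n)$, a prime $p$ with $\frac{n}{2}<p<n-2$ satisfying the exact-divisibility condition \eqref{p||}. First I would dispose of small $n$: the hypothesis $n/2<p<n-2$ forces $n\geq 9$ (for $n\leq 8$ there is no integer $p$ in the open interval, or only $p$ failing primality), so those finitely many cases with $n\leq 8$ must be checked directly — but since Theorem \ref{GGLu} only concerns $-1\leq u\leq 45$ and irreducible $\mathcal{L}_n^{(u)}(x)$, and the exceptional factorization occurs only at $(u,n)=(10,3)$, this is a finite verification against tables. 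For $n\geq 9$ the strategy is to find a prime $p$ in the range $\left(\frac{n}{2},\,n-2\right)$ such that $p$ divides exactly one of the numbers $1+2(u+l)$ for $n-p+1\leq l\leq p$, with multiplicity one. Since $\gcd$ of $p$ with any $1+2(u+l)$ contributes a factor $p$ only if $p\mid 1+2(u+l)$, and consecutive such terms differ by $2$, the interval $n-p+1\leq l\leq p$ has length $2p-n<p$, so at most one term in that block is divisible by $p$; the real issue is to guarantee one such term exists and that $p^2$ does not divide it.

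The key device is Bertrand-type prime gap estimates, in the spirit of Lemma \ref{prime cong}. The set of admissible residues: I want $p\mid 1+2(u+l)$ for some $l$ with $\frac{n}{2}<l\leq n$ say (which already lands inside $[n-p+1,p]$ once $p>n/2$), equivalently $1+2(u+l)\equiv 0\pmod p$ for some $l$ in a window of length $\approx n/2$; since $1+2(u+l)$ runs through an arithmetic progression of common difference $2$, the values $1+2(u+l)$ for $l$ in a window of length $p$ cover a full residue system mod $p$ (as $p$ is odd), so for a window of length $\geq p$ there is always such an $l$. For shorter windows one needs $p$ to actually divide one of the numbers $1+2(u+n),\dots,1+2(u+ \lceil n/2\rceil+1)$, i.e. $p$ should lie among the odd prime factors of $\prod_{l}(1+2(u+l))$ over that range; a counting argument (the product is roughly $(2n)^{n/2}$, so it has a prime factor $>n$ unless it is built from small primes, which happens only for sparse $(u,n)$) handles the generic case, and the finitely many sparse exceptions are cleared by a direct search for an alternative prime or, failing that, by reference to the explicit tables already invoked ($S_M$, the de Weger solutions of Lemma \ref{weger}, and Lehmer/Luca--Najman tables). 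Throughout, $p^2\nmid 1+2(u+l)$ is automatic once $p>\sqrt{1+2(u+n)}$, which holds whenever $p>n/2$ and $u\leq \max(45,\tfrac{4n}{3})$ provided $n$ is not too small.

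The main obstacle I anticipate is precisely the sparse pairs $(u,n)$ for which $\prod_{l=\lceil n/2\rceil+1}^{n}(1+2(u+l))$ has no prime factor in the open interval $\left(\frac{n}{2},n-2\right)$ — these force one either to enlarge the search window (using that the full product $\prod_{l=1}^{n}(1+2(u+l))$ still typically supplies a suitable $p$, at the cost of checking that $p$ divides a term with index inside $[n-p+1,p]$) or to handle the polynomial $\mathcal{L}_n^{(u)}(x)$ by a bare-hands Newton-polygon / discriminant computation as in the proof of Theorem \ref{Thm2}. Since $u$ is bounded and, by the reduction, $n$ is effectively bounded too in the residual cases (the Diophantine constraints $P\big(\prod(1+2(u+l))\big)$ small force $n$ into a finite list via Lemma \ref{weger}), this last step is a finite, if tedious, computation. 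Finally I would assemble: Lemma \ref{pr} gives Galois group $S_n$ in all non-exceptional cases, and Lemma \ref{bsq} guarantees the discriminant is a non-square there (so $A_n$ is ruled out), completing the proof of Lemma \ref{GGu}.
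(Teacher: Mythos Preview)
Your reduction to Lemma \ref{pr} is correct, and so is the observation that the window $n-p+1\le l\le p$ has length $2p-n<p$, so at most one term $1+2(u+l)$ in it is divisible by $p$, and that $p^2\nmid 1+2(u+l)$ follows from $p>n/2$ once $1+2(u+n)<(n/2)^2$, which holds under $u\le\max(45,4n/3)$ for $n$ moderately large. The genuine gap is your existence argument. The ``counting argument'' you sketch --- the product $\prod_{l>n/2}(1+2(u+l))$ is large, hence has a prime factor $>n$ unless built from small primes --- points in the wrong direction: a prime factor exceeding $n$ is useless, since you need $p<n-2$. What you must show is that some prime in the open interval $(n/2,n-2)$ divides one of the terms, and nothing in your outline forces such a prime to appear. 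Invoking Lemma \ref{weger} or the Lehmer/Luca--Najman tables does not obviously help either, since the smoothness constraint here is not of the shape $x+y=z$ with $P(xyz)\le 13$.

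The paper's argument is quite different in this step. For $n>130$ one restricts to primes $p\in(2n/3,n-2)$, writes $1+2(u+n)=t_pp+r_p$ with $0\le r_p<p$, and observes that if $r_p$ is odd for any such $p$ then $p$ divides one of $r_p+2,\ldots,r_p+2(2p-n)$ and we are done. Assuming $r_p$ even for every such $p$ forces $t_p$ odd and $r_p\le 2n-2p-2$; letting $P_1,P_2$ be the least and greatest primes in $(2n/3,n-2)$, one proves $3P_2-P_1>2n$ (by direct check for $130<n\le1000$ and via the prime-gap estimate of \cite{pgap} for $n>1000$), and then a short case analysis on whether $1+2(u+n)$ lies below $3P_2$, between $3P_2$ and $5P_1$, between $5P_1$ and $5P_2$, or between $5P_2$ and $7P_1$, each time picking $p=P_1$ or $p=P_2$, yields a contradiction; the remaining possibility $1+2(u+n)\ge 7P_1>14n/3$ gives $u>4n/3$, contradicting the hypothesis. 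For $n\le 130$ the paper checks Lemma \ref{pr} directly (this disposes of all $n\ge 40$), and for the residual $619$ pairs $(u,n)$ the Galois group is computed in {\sc Magma}. Your ``small $n$'' cutoff of $n\le 8$ is therefore far too optimistic; substantial computation is needed.
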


\begin{proof} Let $n \leq 130$ and $u \leq \max(45,\frac{4n}{3}).$ We apply Lemma \ref{pr} for these pairs of $(u,n)$. We check that all these pairs  with $n \geq 40$ are excluded. Out of the remaining we are left with $619$ pairs of $(u,n) $ for which Lemma \ref{pr} is not satisfied. For these 619 pairs, we compute Galois group directly using MAGMA. Hence we may now suppose that $n >130.$ Since $1+2(u+n)<(n/2)^{2}$, by Lemma \ref{pr}, it suffices to choose a prime $p\in (n/2, n-2)$ with $p|(1+2(u+l_p))$ for some $l_p$ such that 
$n-p+1\leq l_p\leq p$. 
 For each 
$p\in (n/2, n-2)$, we write 
\begin{align*}
1+2(u+n)=t_pp+r_p, \ 0\leq r_p<p. 
\end{align*}
It suffices to find a prime $p\in (n/2, n-2)$ such that $p$ divides one of 
$r_p+2, r_p+4, \ldots, r_p+2(2p-n)$. We now restrict to $p\in (2n/3, n-2)$.

Suppose $r_p$ is odd for some $p\in (2n/3, n-2)$. Then $r_p+2(2p-n)\geq p$ 
if $r_p\geq 2n-3p$ which is true since $2n-3p<0\leq r_p$. This, together with 
$r_p<p$ and $r_p$ odd implies $p=r_p+2i$ some $i\leq 2p-n$. Hence we may now 
assume that $r_p$ is even for each $p\in (2n/3, n-2)$. Write $r_p=2r'_p$ with 
$0\leq r'_p\leq (p-1)/2$. Then $r_p+2, r_p+4, \ldots, r_p+2(2p-n)$ is given by 
$2(r'_p+1), 2(r'_p+2), \ldots, 2(r'_p+2p-n)$. If $r'_p+2p-n\geq p$ for some 
$p\in (2n/3, n-2)$, then we are done. Hence assume that $r'_p+2p-n<p$ implying 
$r_p=2r'_p\leq 2n-2p-2$ for each $p\in (2n/3, n-2)$. Further 
\begin{align}\label{tp}
1+2(u+n)<(t_p-2)p+2n \ {\rm and}\  t_p \ {\rm is\ odd \ for \ each}\ p\in (2n/3, n-2)
\end{align}
 since $1+2(u+n)$ is odd. 

We now write $P_1, P_2$ for the least prime and maximum prime in $(2n/3, n-2)$, respectively. 
Then $P_1=p_{\pi([2n/3])+1}>2n/3$ and $P_2=p_{\pi(n-3)}\leq n-3$. We first show the following: 
\begin{align}\label{3P-P}
3P_2-P_1>2n. 
\end{align}
For $130< n\leq 1000$, we check that the above assertion holds. Assume $n>1000$. 
By \cite{pgap}, there is a prime in $(m, 1.05m)$ for every $m\geq 213$. Taking $m=\lfloor 2n/3\rfloor$, we get 
$P_1<1.05\cdot 2n/3=0.7n$. Again taking $m=\lfloor \frac{n-3}{1.05}\rfloor =\lfloor 20(n-3)/21\rfloor$, we get 
$P_2\geq 20(n-3)/21$. Thus $3P_2-P_1\geq 20(n-3)/7-0.7n>2n$. 
 
\noindent 
{\bf Case I:} Let $u$ be such that $1+2(u+n)<3P_2$. Taking $p=P_2$, we get 
$1+2(u+n)=t_pp+r_p<3p$ giving $t_p\in \{0,1, 2\}$ implying $t_p=1$ since $t_p$ is odd. Hence 
$r_p=1+2(u+n)-p>2n-p>2n-2p$ which is a contradiction. Thus  
$1+2(u+n)\geq 3P_2$. 

\noindent 
{\bf Case II:} Let $u$ be such that $3P_2\leq 1+2(u+n)<5P_1$. Taking $p=P_1$, we get 
$3p<3P_2\leq 1+2(u+n)=t_pp+r_p<5p$ giving $t_p=3$ since $t_p$ is odd and $r_p <p$. This gives 
$3P_2\le (3-2)P_1+2n$ or $3P_2-P_1\leq 2n$ which contradicts \eqref{3P-P}. Thus 
$1+2(u+n)\geq 5P_1$. 

\noindent 
{\bf Case III:} Let $u$ be such that $5P_1\leq 1+2(u+n)<5P_2$. Observe that $3P_2<5P_1$ since 
$P_1>2n/3$ and $P_2<n-2$. Taking $p=P_2$, we get $3p<5P_1\leq 1+2(u+n)=t_pp+r_p<5p$ giving $t_p=3$ since $t_p$ is odd and $r_p <p$. Further $5P_1\le (3-2)P_2+2n$. This is a contradiction since 
 $10n/3<5P_1\leq  P_2+2n<3n$. Thus $1+2(u+n)\geq 5P_2$. 

\noindent 
{\bf Case IV:} Let $u$ be such that $5P_2\leq 1+2(u+n)<7P_1$. Taking $p=P_1$, we get 
$5p<5P_2\leq 1+2(u+n)=t_pp+r_p<7p$ giving $t_p=5$. This gives 
$5P_2\le (5-2)P_1+2n$ i.e $3P_2-P_1+2(P_2-P_1)\leq 2n$ which contradicts \eqref{3P-P}. 

Thus $1+2(u+n)\geq 7P_1$. Since $P_1>2n/3$, we have  $P_1\geq 2n/3+1/3$ giving  
$1+2(u+n)\geq 7(2n/3+1/3)$ implying $u>4n/3$. Since $u\leq \max(45, 4n/3)$ and 
$n\geq 130$, this is not possible. Hence the assertion. 
\end{proof}

\noindent 
{\bf Proof of Theorem \ref{GGLu}:} By \cite[Corollary 1.1]{LaNaSh15}, we see 
that $\mathcal{L}_n^{(u)}(x)$ is irreducible except for $(u, n)=(10,3)$. 
For $(u, n)=(10,3)$, we check that the Galois group is $\mathbb{Z}_2$. For 
$(u, n)\neq(10,3)$, the assertion now follows from Lemma \ref{GGu}.
\qed


\begin{thebibliography}{100}
      
\bibitem{CuHa} J. Cullinan and F. Hajir, \emph{Primes of prescribed congruence class in short intervals}, Integers {\bf 12}(2012), Paper No. A56, 
4 pp.
\bibitem{Dum06} M. G. Dumas, \emph{Sur quelques cas 
d'irr$\acute{e}$ductibilit$\acute{e}$ des polynomes $\grave{a}$ 
coefficients rationnels}, J. Math. Pures Appl., {\bf 2} (1906), 191--258.

       \bibitem{Fil94} M. Filaseta, \emph{The irreducibility of all but finitely many Bessel polynomials},
       Acta Math., {\bf 174} (1995), 383--397.
     
       \bibitem{Fil Influ} M. Filaseta, C. Finch and J. R. Leidy, \emph{T. N. Shorey's influence in the theory of irreducible polynomials}, Diophantine Equations, Narosa Publ., New Delhi (2008), 77-102.

\bibitem{hajir} F. Hajir, \emph{On the Galois group of generalized Laguerre polynomials}, J. Th$\acute{e}$or. Nombres Bordeaux, {\bf  17}(2) (2005), 517--525.

\bibitem{pgap} H. Harborth and A. Kemnitz, \emph{Calculations for Bertrand's Postulate}, Mathematics Magazine, {\bf 54}(1981), No. 1, 33--34.

\bibitem{Hasse} H. Hasse, \emph{Number theory,} Classics in Mathematics, Springer-Verlag, Berlin 2002.
       \bibitem{LaSh11} S. Laishram and T. N. Shorey, \emph{Extensions of some irreducibility results}, Indag. Math., 21 (2011), 87-105.
       \bibitem{LaNaSh15} S. Laishram, S. G Nair and T. N. Shorey, \emph{Irreducibility of Generalized Laguerre Polynomials   $L_n^{(\frac{1}{2}+u)}(x)$ with integer $u$}, J. Number Theory, {\bf 160} (2016), 76–107. 
       \bibitem{la15hardy} S. Laishram, \emph{On the Galois groups of generalized Laguerre Polynomials}, Hardy Ramanujan Journal, {\bf 37} (2015), 8-12.
       \bibitem{Leh64} D. H. Lehmer, \emph{On a problem of St\H{o}rmer},
             Illinois J. Math. {\bf 8} (1964), 57-79.
                           \bibitem{Naj} F. Luca and F. Najman, \emph{On the largest prime factor of $x^2-1$}, Math. Comp. {\bf 80} (2011), 429-435.
                           \bibitem{Naj1}  F. Luca and F. Najman, \emph {Errata to On the largest prime factor of $x^2-1$}, Math. Comp. {\bf 83} (2014), 337.
                              \bibitem{Sch1} I. Schur, \emph{Einige S$\ddot{a}$tze $\ddot{u}$ber Primzahlen mit Anwendungen auf Irreduzibilit$\ddot{a}$tsfragen}, II, Sitzungsber.
       Preuss. Akad. Wiss. Berlin Phys.-Math. Kl. {\bf 14} (1929), 370-391.  
        \bibitem{Sch31} I. Schur, \emph{Affektlose Gleichungen in der Theorie der
                         Laguerreschen und Hermitschen Polynome}, J. Reine Angew. Math. {\bf 165} (1931), 52-58.              


\bibitem{Sz75} G. Szego, \emph{Orthogonal Polynomials,} Amer. Math. Soc. Colloq. Publ. Series Vol 23, Amer. M. Soc, Providence RI, 1975, Fourth Edition.
\bibitem{Weg}B.M.M de Weger, \emph{Solving exponential Diophantine 
equations using lattice basis reduction algorithms}, J. Number Theory 
{\bf 26} (1987), 325-367. 
      
       \end{thebibliography}
       \end{document}